\numberwithin{equation}{section}
\newtheorem{defin}{Definition}[section]
\newtheorem{theorem}[defin]{Theorem}
\newtheorem{lemma}[defin]{Lemma}
\newtheorem{proposition}[defin]{Proposition}
\theoremstyle{definition} {\newtheorem{remark}[defin]{Remark}}
\def\Rb{{\mathbb R}}
\let\O=\Omega
\newcommand{\be}{\begin{equation}}
\newcommand{\ee}{\end{equation}}
\newcommand{\bea}{\begin{eqnarray}}
\newcommand{\eea}{\end{eqnarray}}
\newcommand{\beas}{\begin{eqnarray*}}
\newcommand{\eeas}{\end{eqnarray*}}
\newcommand{\weakst}{\stackrel{\ast}{\rightharpoonup}}
\begin{document}
\title{Second-order structured deformations:\\
relaxation, integral representation and applications}
\author{Ana Cristina Barroso}
\address{Faculdade de Ci\^encias da Universidade de Lisboa, Departamento de
Matem\'atica and CMAF-CIO, Campo Grande, Edif\'\i cio C6, Piso 1, 1749-016
Lisboa, Portugal}
\email[A.~C.~Barroso]{acbarroso@ciencias.ulisboa.pt}
\author{Jos\'{e} Matias}
\address{Departamento de Matem\'atica, Instituto Superior T\'ecnico, Av.%
\@%
Rovisco Pais, 1, 1049-001 Lisboa, Portugal}
\email[J.~Matias]{jose.c.matias@tecnico.ulisboa.pt}
\author{Marco Morandotti}
\address{SISSA -- International School for Advanced Studies, Via Bonomea,
265, 34136 Trieste, Italy}
\email[M.~Morandotti \myenv]{marco.morandotti@sissa.it}
\author{David R.%
\@%
Owen}
\address{Department of Mathematical Sciences, Carnegie Mellon University,
5000 Forbes Ave., Pittsburgh, PA 15213 USA}
\email[D.~R.~Owen]{do04@andrew.cmu.edu}
\date{\today. Preprint SISSA: 37/MATE/2016}
\begin{abstract}
Second-order structured deformations of continua provide an extension of
the multiscale geometry of first-order structured deformations by taking
into account the effects of submacroscopic bending and curving. We derive here an
integral representation for a relaxed energy functional in the setting of
second-order structured deformations.  
Our derivation covers inhomogeneous initial
energy densities (i.e., with explicit dependence on the position); 
finally, we provide explicit formulas for bulk relaxed
energies as well as anticipated applications.
\end{abstract}

\subjclass[2010]{49J45 
(74G65, 
74M25, 
15A99) %
}

\maketitle

\medskip

\section{Introduction}

A first-order structured deformation $(g,G)$ from a region $\Omega \subset 
\mathbb{R}^{N}$ provides not only a macroscopic deformation field $g:\Omega
\rightarrow \mathbb{R}^{d}$ but also a field $G:\Omega \rightarrow \mathbb{R}%
^{d\times N}$ intended to capture the contributions at the macrolevel of
smooth submacroscopic geometrical changes such as stretching, shearing, and
rotation. \ Indeed, in a variety of settings \cite{BMS,CF,DO,Shi},
one can prove an approximation theorem to the effect that there exist a
sequence of mappings $u_{n}:\Omega \rightarrow \mathbb{R}^{d}$ that
converges to $g$ and whose gradients $\nabla u_{n}:\Omega \rightarrow 
\mathbb{R}^{d\times N}$ converge to $G$. \ In addition, one obtains a
formula that identifies the difference $M:=\nabla g-G=\nabla
\lim_{n\rightarrow \infty }u_{n}-\lim_{n\rightarrow \infty }\nabla u_{n}$ as
a limit of ``disarrangements'', i.e., of averages of directed jumps $%
[u_{n}]\otimes \nu _{u_{n}}$ in the approximating mappings (here, $\nu
_{u_{n}}$ denotes the normal to the jump-set of $u_{n}$). 
These disarrangements include the formation of voids, slips, and separations
occuring at submacroscopic levels. \ $\ M$ is called the (volume) density of
disarrangements, and, because $G=$\ $\lim_{n\rightarrow \infty }\nabla u_{n}$
does not reflect the jumps in $u_{n}$, the field $G$ is called the
deformation without disarrangements. \ 

The additive decomposition $\nabla g=G+M$ along with the identifications
above of $G$ and $M$ provides a richer geometrical setting in which to study
mechanisms for storing mechanical energy. \ The main approach to assigning
an energy to a continuum undergoing structured deformations $(g,G)$ is to
assume that such an assignment $E(u_{n})$ is available for the approximating
deformations $u_{n}$ in the form of a bulk energy plus an interfacial
energy, $E_{B}(u_{n})+E_{I}(u_{n})$, and to assign to $(g,G)$ the relaxed
energy%
\begin{equation}
E(g,G):=\inf_{\{u_{n}\}}\left\{ \liminf_{n\rightarrow \infty
}(E_{B}(u_{n})+E_{I}(u_{n})):u_{n}\rightarrow g,\nabla u_{n}\rightarrow
G\right\}   \label{relaxed energy}
\end{equation}%
where the class of approximating functions and the two senses of convergence
are to be specified in such a way that an appropriate version of the
approximation theorem can be verified. \ This approach was first studied in %
\cite{CF}, where additive decompositions 
\begin{equation*}
E(g,G)=E_{\bulk}(g,G)+E_{\itf}(g,G)
\end{equation*}%
of the relaxed energies as well as a variety of properties of the associated
bulk and interfacial energy densities were established. \ In a different
setting, the study \cite{BMS} used similar techniques to obtain an additive
decomposition of this form along with the additional decomposition \ $%
E_{\bulk}(g,G)=$ $E_{\bulk}^{1}(M)+E_{\bulk}^{2}(G,\nabla g)$. \ See the survey
article \cite{BMSs} for details and comparisons. The article \cite{Lar} addresses issues related to additional decomposition
of $E_{\bulk}$ in \cite{BMS}, while \cite{DP} obtains detailed information about relaxed energies in the case of
one-dimensional structured deformations.

The various studies of relaxed energies in the case of first-order
structured deformations $(g,G)$ cited above do not account explicitly for
the contributions to the energy of ``gradient disarrangements'', i.e., of
jumps in $\nabla u_{n}$, with $u_{n}$ converging to $g$ and $\nabla u_{n}$
converging to $G$. The multiscale geometry of structured deformations was
broadened \cite{OP,P04}, to provide additional fields capable of
describing effects at the macrolevel of gradient disarrangements. \ A
second-order structured deformation is a triple $(g,G,\Gamma )$ in which $%
(g,G)$ is a first-order structured deformation (with additional smoothness
granted to $g$ and $G$) and $\Gamma :\Omega \rightarrow \mathbb{R}^{d\times
N\times N}$ is a field intended to describe the contributions at the
macrolevel of smooth bending and of curving at submacroscopic levels. In 
\cite{OP,P04}, various versions of approximation theorems are obtained
that provide sequences of approximations $u_{n}$ with $u_{n}$ converging to $%
g$, $\nabla u_{n}$ converging to $G$, and $\nabla ^{2}u_{n}$ converging to $%
\Gamma $. \ The decomposition $\nabla g=G+M$ remains valid here and implies
the higher-order decomposition%
\begin{equation*}
\nabla ^{2}g=\nabla M+(\nabla G-\Gamma )+\Gamma.
\end{equation*}%
In view of the approximation theorem, we can write%
\begin{equation*}
\nabla G-\Gamma =\nabla \lim_{n\rightarrow \infty }\nabla
u_{n}-\lim_{n\rightarrow \infty }\nabla ^{2}u_{n}.
\end{equation*}%
As a consequence, $\nabla G-\Gamma $ can be shown to be a limit of averages
of directed jumps $[\nabla u_{n}]\otimes \nu _{\nabla u_{n}}$ in analogy
with the corresponding result for $\nabla g-G$, so that $\nabla G-\Gamma $
emerges as a density of gradient disarrangements. \ 

In this article, we use this background to study the relaxation of energies
in a specific mathematical setting for second-order structured deformations $%
(g,G,\Gamma )$, the so-called ``$SBV^{2}$-setting'', see \cite{DO2}. 
The results in \cite{CF} and \cite{BMS} for the energetics of first-order structured deformations and
those of \cite{P04} provide a guide for our analysis of
energetics in the second-order case. \ Beyond providing an analysis in the
second-order case, we broaden the scope further by following ideas in \cite%
{BBBF} in order to include in our analysis the case of ``inhomogeneous
energetic response'', \ i.e., the case in which initial bulk and interfacial
densities can depend explicitly on location in the body. \ \ 

The overall plan of this work in the ensuing sections is as follows. In
Section \ref{sect:prel} we fix the notation and recall some auxiliary results used
throughout the paper. 
The problem, our hypotheses and the main result,
Theorem \ref{main}, are presented in Section \ref{sect:statement}. 
In Section \ref{sect:prelres} we prove some
preliminary results and, in particular, show that our energy functional can
be decomposed into a sum of two lower order functionals. 
Section \ref{sect:proofmain} is
devoted to the proof of Theorem \ref{main}, and finally, in Section \ref{sect:app}, we
give an example in which the formula in Theorem \ref{main} for the bulk
relaxed energy density can be calculated explicitly, thus providing an
explicit formula in terms of $\nabla G-\Gamma $ for the volume density of
the non-tangential part of jumps in directional derivatives of
approximations. 
We further indicate in Section \ref{sect:app} applications of the
energetics of second-order structured deformations in the study of elastic
bodies undergoing disarrangements. 

\section{Preliminaries}\label{sect:prel}

\noindent The purpose of this section is to give a brief overview of the
concepts and results that are used in the sequel. Almost all these results
are stated without proofs as they can be readily found in the references
given below.

\subsection{Notation}

Throughout the text $\Omega \subset {\mathbb{R}}^N$, $N\geq 1,$ will denote
an open bounded set and we will use the following notations:

\begin{itemize}
\item ${\mathcal{O}}(\O )$ is the family of all open subsets of $\O ,$

\item $\mathcal{M }(\O )$ is the set of finite Radon measures on $\O ,$

\item $\mathcal{M}^+ (\O )$ is the set of finite and positive Radon measures
on $\O ,$

\item $||\mu||$ stands for the total variation of a measure $\mu\in \mathcal{%
M }(\O ),$

\item $S^{N-1}$ stands for the unit sphere in ${\mathbb{R}}^N$,

\item $e_i$ denotes the $i^{\mathrm{th}}$ element of the canonical basis of $%
{\mathbb{R}} ^N,$ for $i=1, \ldots, N.$

\item $Q$ denotes the unit cube centered at the origin with faces orthogonal
to the coordinate axes,

\item $Q(x, \delta)$ denotes a cube centered at $x \in \Omega$ with side
length $\delta$ and with two of its faces orthogonal to $e_N$,

\item $Q_{\nu}(x, \delta)$ is a cube centered at $x \in \Omega$ with side
length $\delta$ and with two of its faces orthogonal to $\nu \in S^{N-1}$,

\item $Q_{\nu}:=Q_{\nu}(0,1)$,


\item $C$ represents a generic constant whose value might change from line
to line,

\item $\mathrel{}\mathop{\lim}\limits^{}_{n,m\to +\infty}:= \mathrel{}%
\mathop{\lim}\limits^{}_{n\to +\infty} \hspace{-0.1cm} \mathrel{}%
\mathop{\lim}\limits^{}_{m\to +\infty}$ while $\mathrel{}\mathop{\lim}%
\limits^{}_{m,n\to +\infty}:= \mathrel{}\mathop{\lim}\limits^{}_{m\to
+\infty} \hspace{-0.1cm} \mathrel{}\mathop{\lim}\limits^{}_{n\to +\infty}$,

\end{itemize}

\subsection{Measure Theory}

%

We 
recall Reshetnyak's Theorem on weak convergence of vector measures
(see Reshetnyak \cite{RE}; see also Ambrosio, Fusco and Pallara \cite{AFP}).

\begin{theorem}
\label{Reshetnyak} Let $\mu,\,\mu_n$ be ${\mathbb{R}}^d-$valued finite Radon
measures in $\O $ such that $\mu_n \mathrel{}\mathop{\rightharpoonup}%
\limits^{*}_{} \mu$ in $\O $ and such that $||\mu_n||(\O )\to ||\mu||(\O ).$
Then 
\begin{equation*}
\displaystyle\lim_{n\to +\infty} \int_{\O } f\left(x,\frac{\mu_n}{||\mu_n||}%
(x)\right)d||\mu_n||(x) = \int_{\O } f\left(x,\frac{\mu}{||\mu||}%
(x)\right)d||\mu||(x)
\end{equation*}
\noindent for every continuous and bounded function $f:\O \times S^{d-1}\to {%
\mathbb{R}}.$
\end{theorem}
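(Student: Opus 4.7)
The plan is to lift both sides of the claimed identity to the product space $\Omega\times S^{d-1}$ and identify the weak-$\ast$ limit there. A preliminary step is to upgrade the hypotheses to the stronger convergence $\|\mu_n\|\weakst\|\mu\|$ in $\mathcal{M}^+(\Omega)$. Lower semicontinuity of the total variation under weak-$\ast$ convergence of vector measures gives $\|\mu\|(U)\leq\liminf_n\|\mu_n\|(U)$ for every open $U\subset\Omega$; coupling this with the assumption $\|\mu_n\|(\Omega)\to\|\mu\|(\Omega)$ and a complement argument yields convergence of $\|\mu_n\|$ to $\|\mu\|$ in the tight sense, so that testing against bounded continuous (rather than merely compactly supported) functions is legitimate.

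Next, I would introduce the positive finite measures $\Lambda_n$ on $\Omega\times S^{d-1}$ defined as the push-forwards of $\|\mu_n\|$ under the map $x\mapsto\bigl(x,\mu_n/\|\mu_n\|(x)\bigr)$, so that the left-hand side of the claim rewrites as $\int_{\Omega\times S^{d-1}} f\,d\Lambda_n$. Because $\Lambda_n(\Omega\times S^{d-1})=\|\mu_n\|(\Omega)$ is uniformly bounded, weak-$\ast$ compactness yields, along a subsequence, a positive Radon limit $\Lambda$. Projection onto the $\Omega$-factor together with the upgraded convergence of total variations from the preliminary step identifies the $\Omega$-marginal of $\Lambda$ as $\|\mu\|$, so that $\Lambda$ disintegrates as $\|\mu\|\otimes\sigma_x$ for some family $\{\sigma_x\}$ of probability measures on $S^{d-1}$.

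To identify $\sigma_x$, the key idea is to exploit the linear test functions $f(x,s)=\phi(x)\,s_j$ with $\phi\in C_c(\Omega)$ and $j=1,\dots,d$. The left-hand side becomes $\int_\Omega\phi\,d\mu_n^{(j)}$, which converges by the hypothesis $\mu_n\weakst\mu$ to $\int_\Omega\phi\,d\mu^{(j)}$, while on the other side the integral equals $\int_\Omega\phi(x)\bigl(\int_{S^{d-1}}s_j\,d\sigma_x(s)\bigr)\,d\|\mu\|(x)$ by disintegration. Hence the barycenter of $\sigma_x$ equals $(\mu/\|\mu\|)(x)$ for $\|\mu\|$-a.e.\ $x$; since the barycenter of a probability measure on $S^{d-1}$ has norm at most $1$, with equality only for Dirac masses, this forces $\sigma_x=\delta_{(\mu/\|\mu\|)(x)}$. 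The limit $\Lambda$ is thereby uniquely determined, the whole sequence $\Lambda_n$ converges weak-$\ast$ to it, and testing against the given bounded continuous $f$ yields the conclusion. The main obstacle is precisely this identification step: general $f$ is nonlinear in $s$ and the vector hypothesis only supplies the linear test functions, so one must invoke the geometric rigidity of the sphere (no nontrivial average of unit vectors has unit norm) to pin down the conditional measures.
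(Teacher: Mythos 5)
Your argument is correct, and it is in fact the standard proof of the Reshetnyak continuity theorem: lift to $\Omega\times S^{d-1}$ via the push-forward measures $\Lambda_n$, upgrade $\|\mu_n\|\weakst\|\mu\|$ to narrow convergence using mass convergence and a complement argument, extract a subsequential limit $\Lambda$, disintegrate over the $\Omega$-marginal $\|\mu\|$, and identify each fiber measure $\sigma_x$ as a Dirac mass by matching barycenters against the linear test functions $\phi(x)s_j$ and invoking the strict-convexity/rigidity of the sphere. The paper does not prove the theorem itself — it cites Reshetnyak and Ambrosio–Fusco–Pallara — and your proof is essentially the one in the latter reference (Theorem~2.39 there), so there is nothing to add.
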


\subsection{BV Functions}
In this section we briefly summarize some facts on functions of bounded
variation that will be used throughout the paper. We refer to 
\cite{AFP,
EG,
FE,
G,
Z} for a detailed description of this
subject.

A function $u \in L^1(\O ; {\mathbb{R}}^d)$ is said to be of bounded
variation, and we write $u \in BV(\O ; {\mathbb{R}}^d)$, if all its first
order distributional derivatives $D_j u_i \in \mathcal{M }(\O )$ for $%
i=1,...,d$ and $j=1,...,N.$ The matrix-valued measure whose entries are $D_j
u_i$ is denoted by $Du.$ 
By the Lebesgue Decomposition Theorem $Du$ can be split into the sum of two
mutually singular measures $D^{a}u$ and $D^{s}u$ (the absolutely continuous
part and the singular part, respectively, of $Du$ with respect to the
Lebesgue measure $\mathcal{L}^N$). By $\nabla u$ we denote the Radon-Nikod%
\'{y}m derivative of $D^{a}u$ with respect to $\mathcal{L}^N$, so that we
can write 
\begin{equation*}
Du= \nabla u \mathcal{L}^N \lfloor \O + D^{s}u.
\end{equation*}

Let $\O _u$ be the set of points where the approximate limit of $u$ exists,
i.e., points $x\in \O $ for which there exists $z\in {\mathbb{R}}^N$ such
that 
\begin{equation*}
\lim_{\varepsilon\to 0^+} \ave_{Q(x,\varepsilon)}|u(y)-z|\,
dy=0.
\end{equation*}

\noindent If $x\in \O _u$ and $z=u(x)$ we say that $u$ is \textit{%
approximately continuous} at $x$ (or that $x$ is a Lebesgue point of $u$).
The function $u$ is approximately continuous for $\mathcal{L}^N$-a.e. $x\in 
\O _u$. 

The \textit{jump set} of the function $u$, denoted by $S_u$, is the set of
points $x\in \O \setminus \O _u$ for which there exist $a, \,b\in {\mathbb{R}%
}^d$ and a unit vector $\nu \in S^{N-1}$, normal to $S_u$ at $x$, such that $%
a\neq b$ and 
\begin{equation*}  
\lim_{\varepsilon \to 0^+} \frac {1}{\varepsilon^N} \int_{\{ y \in
Q_{\nu}(x,\varepsilon) : (y-x)\cdot\nu > 0 \}} | u(y) - a| \, dy = 0,
\qquad
\lim_{\varepsilon \to 0^+} \frac {1}{\varepsilon^N} \int_{\{ y \in
Q_{\nu}(x,\varepsilon) : (y-x)\cdot\nu < 0 \}} | u(y) - b| \, dy = 0.
\end{equation*}
The triple $(a,b,\nu)$ is uniquely determined by the conditions above 
up to a permutation of $(a,b)$ and a change of sign of $\nu$
and is denoted by $(u^+ (x),u^- (x),\nu_u (x)).$

If $u \in BV(\O )$ it is well known that $S_u$ is countably $(N-1)$-rectifiable, see \cite{AFP}, 
and the following decomposition holds 
\begin{equation*}
Du= \nabla u \mathcal{L}^N \lfloor \O + [u] \otimes \nu_u {\mathcal{H}}^{N-1}\lfloor S_u + D^cu,
\end{equation*}
\noindent where $[u]:= u^+ - u^-$ and $D^cu$ is the Cantor part of the
measure $Du$. 

Throughout this paper we shall employ for convenience the slightly abusive notation $[f(x)]$ in place of the more accurate notation $[f](x)$ for the difference $f^+(x)-f^-(x)$.

We also recall that a measurable subset $E\subset{\mathbb{R%
}}^N$ is a \textit{set of finite perim\-eter} in $\Omega$ if the
characteristic function $\chi_E$ of $E$ is a function of
bounded variation. In this case, the perimeter of $E$ in $\Omega$ is given
by the total variation of $\chi_E$ in $\Omega$, i.e., $\mathrm{Per}_{\Omega} (E):=|D\chi_E|(\Omega)$.

The following theorem is a variant of a well-known approximation result for
sets of finite perimeter and it will be used in the proof of the upper bound
inequalities in Proposition \ref{upperinterfacial} and Theorem \ref{I_2}. 

\begin{theorem}[{\cite[Lemma 3.1]{Baldo90}}] \label{BaldoL3.1} 
Let $\Omega$ be an open, bounded set with Lipschitz
boundary and let $E$ be a subset of $\Omega$ with $\mbox{\em Per}%
_\Omega(E)<+\infty$. There exists a sequence $\{E_n\}$ of polyhedral sets
(i.e., for each $n$, $E_n$ is a bounded Lipschitz domain with $\partial E_n=
H_{1,n}\cup H_{2,n}\cup \dots H_{L_n, n}$, where each $H_{j,n}$ is a closed
subset of a hyperplane $\{x\in {\mathbb{R}}^N:\, x\cdot \nu_j = c_j\}$, for
some $c_j\in {\mathbb{R}}$ and $\nu_j\in S^{N-1}$, $j=1,\dots , L_n$,\, $%
L_n\in{\mathbb{N}}$) satisfying the following properties:

\begin{itemize}
\item[(i)] $\mbox{\Large $\chi $}_{E_n}\to \mbox{\Large $\chi $}_{E}$ in $%
L^1(\Omega)$, as $n\to+\infty$,

\item[(ii)] $\displaystyle \lim_{n\to +\infty}\mbox{\em Per}_{\Omega}(E_n)=%
\mbox{\em Per}_{\Omega}(E)$,

\item[(iii)] $\mathcal{H}^{N-1}(\partial^* E_n\cap \partial \Omega)=0$ ($\partial^*E$ being the reduced boundary of $E$, see \cite{AFP}),

\item[(iv)] $\mathcal{L}^N(E_n)=\mathcal{L}^N(E)$.
\end{itemize}
\end{theorem}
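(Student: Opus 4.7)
The plan is a three-stage construction: smooth approximation, polyhedral discretization, and two cosmetic corrections that enforce (iii) and (iv). At each stage one only spoils the previous conclusions by an amount that vanishes in the limit, so a diagonal extraction at the end yields the claimed sequence.

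\textbf{Stage 1 (Smooth sets via mollification and coarea).} By the classical strict approximation of $BV$ functions, I would start by choosing $u_\varepsilon \in C^\infty(\Omega)$ with $0\le u_\varepsilon\le 1$, $u_\varepsilon \to \chi_E$ in $L^1(\Omega)$, and $\int_\Omega |\nabla u_\varepsilon|\,dx \to \mathrm{Per}_\Omega(E)$. The coarea formula
\[
\int_0^1 \mathrm{Per}_\Omega(\{u_\varepsilon>t\})\,dt \;=\; \int_\Omega |\nabla u_\varepsilon|\,dx
\]
together with Sard's theorem and a mean-value selection gives a regular value $t_\varepsilon\in(1/4,3/4)$ at which $F_\varepsilon:=\{u_\varepsilon>t_\varepsilon\}$ has $C^\infty$ boundary in $\Omega$ and $\mathrm{Per}_\Omega(F_\varepsilon)$ is bounded from above by a multiple of the full integral. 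Combining this upper bound with lower semicontinuity of the perimeter yields $\chi_{F_\varepsilon}\to \chi_E$ in $L^1$ and $\mathrm{Per}_\Omega(F_\varepsilon)\to \mathrm{Per}_\Omega(E)$.

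\textbf{Stage 2 (Polyhedral approximation of smooth sets).} Given one such $F_\varepsilon$, cover $\overline{\partial F_\varepsilon\cap \Omega}$ by finitely many coordinate balls in each of which $\partial F_\varepsilon$ is the graph of a $C^\infty$ function over its tangent plane. In each base, I would triangulate with a simplicial mesh of size $h$ and replace the graph by its piecewise linear interpolant on the mesh; gluing the local polyhedral graphs (smoothly cut off against the Lipschitz $\partial\Omega$) produces a polyhedral set $\widetilde E_{\varepsilon,h}$. As $h\to 0$, $\chi_{\widetilde E_{\varepsilon,h}}\to \chi_{F_\varepsilon}$ in $L^1$ and, by uniform convergence of the tangent planes of the interpolant to those of the $C^1$ graph, the polyhedral $(N-1)$-area converges to the smooth area, so $\mathrm{Per}_\Omega(\widetilde E_{\varepsilon,h})\to \mathrm{Per}_\Omega(F_\varepsilon)$. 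A diagonal extraction in $\varepsilon$ and $h$ yields a sequence $\widetilde E_n$ of polyhedral sets satisfying (i) and (ii).

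\textbf{Stage 3 (Enforcing (iii) and (iv)).} Each $\widetilde E_n$ has finitely many faces carried by hyperplanes $P_{1,n},\dots,P_{L_n,n}$. Since $\partial\Omega$ is Lipschitz and hence countably $(N-1)$-rectifiable with finite $\mathcal{H}^{N-1}$-measure, only at most countably many pairs (orientation, offset) of hyperplanes $P$ can satisfy $\mathcal{H}^{N-1}(P\cap\partial\Omega)>0$. An arbitrarily small rigid motion of $\widetilde E_n$ moves every $P_{j,n}$ off this countable bad set, restoring (iii) while preserving (i) and (ii) by continuity. For (iv), I would pick a small cube $C_n\subset\Omega$ at positive distance from $\partial\widetilde E_n\cap\Omega$ with $\mathcal{L}^N(C_n)=|\mathcal{L}^N(E)-\mathcal{L}^N(\widetilde E_n)|\to 0$, and take $E_n=\widetilde E_n\cup C_n$ or $\widetilde E_n\setminus C_n$ according to the sign; this preserves polyhedrality and changes perimeter by at most $2\,\mathrm{Per}(C_n)\to 0$.

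\textbf{Main obstacle.} The technically weightiest step is Stage 2: producing \emph{convergence} (not merely lower-semicontinuous control) of perimeter under polyhedral discretization of a smooth set. Lower semicontinuity is automatic from $L^1$ convergence, but matching it from above requires the classical estimate that piecewise linear interpolation of a $C^1$ graph converges in $(N-1)$-area as the mesh is refined, together with care in gluing local pieces so that the polyhedral structure, and in particular finiteness of the number of faces, is preserved. The other stages are soft: Stage 1 is a textbook coarea argument, and Stage 3 is a pair of generic small perturbations.
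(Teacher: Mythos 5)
The paper does not prove this statement; it is imported verbatim from Baldo's Lemma~3.1 and used as a black box, so there is no in-paper proof to compare your proposal against. Judged on its own terms, your three-stage skeleton (smooth approximation via mollification and coarea, polyhedral approximation of the smooth set, generic perturbation to secure~(iii) and a volume patch for~(iv)) is the right shape, and Stages~1 and~3 are essentially sound; in Stage~3 your count of ``bad'' hyperplanes is correct because distinct hyperplanes meet in $\mathcal{H}^{N-1}$-null sets, so at most countably many can have $\mathcal{H}^{N-1}(P\cap\partial\Omega)>0$, and for each of the finitely many faces the set of rigid motions sending it to a fixed bad hyperplane has positive codimension, hence Lebesgue measure zero.

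The genuine gap is in Stage~2 and it has two parts. First, the phrase ``smoothly cut off against the Lipschitz $\partial\Omega$'' is incompatible with producing a polyhedral set: a smooth cut-off destroys polyhedrality by definition. Second, and more seriously, the piecewise linear interpolants constructed over different coordinate balls will not agree on overlaps, so ``gluing the local polyhedral graphs'' does not yield a well-defined hypersurface, let alone the boundary of a polyhedral set; you would need compatible meshes across charts, which your construction does not supply. There is also a compactness issue: $\partial F_\varepsilon\cap\Omega$ need not be compactly contained in $\Omega$ (it can run into $\partial\Omega$), so a finite cover by coordinate balls is not automatic for an interior smooth approximant $u_\varepsilon\in C^\infty(\Omega)$. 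A clean way to repair Stage~2 is to work globally: for a good level $t_\varepsilon$, the signed distance function $d$ to $\partial F_\varepsilon$ is $C^\infty$ in a tubular neighborhood $T$ of this hypersurface; take a single simplicial triangulation of $T$ with mesh $h$, let $d_h$ be the piecewise affine interpolant of $d$ on the vertices, and set $\widetilde E_{\varepsilon,h}:=\{d_h<0\}$ suitably completed outside $T$ by $F_\varepsilon$ itself. Then $\partial\widetilde E_{\varepsilon,h}$ is a finite union of pieces of affine hyperplanes (one per simplex where $d_h$ changes sign), $\nabla d_h\to\nabla d$ uniformly on $T$, and so $\chi_{\widetilde E_{\varepsilon,h}}\to\chi_{F_\varepsilon}$ in $L^1$ with $\mathrm{Per}_\Omega(\widetilde E_{\varepsilon,h})\to\mathrm{Per}_\Omega(F_\varepsilon)$. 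The localization to $\Omega$ still needs care near $\partial\Omega$ (restrict $T$ to stay inside $\Omega$ and control the portion of perimeter of $F_\varepsilon$ near $\partial\Omega$ using the bound from coarea), but this removes the gluing obstruction that your write-up leaves open.
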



If $\Omega$ is an open and bounded set with Lipschitz boundary then the
outer unit normal to $\partial \Omega$ (denoted by $\nu$) exists ${\mathcal{H}}^{N-1}$-a.e. and the trace for functions in $BV(\Omega;{\mathbb{R}}^d)$ is
defined.



\begin{lemma}
\label{ctap} Let $u \in BV(\Omega; {\mathbb{R}}^d)$. There exist piecewise
constant functions $u_n$ such that $u_n \rightarrow u$ in $L^1(\Omega; {%
\mathbb{R}}^d)$ and 
\begin{equation*}
||Du||(\Omega) = \lim_{n\to +\infty}||Du_n||(\Omega) = \lim_{n\to +\infty}
\int_{S_{u_n}} |[u_n](x)|\; dH^{N-1}(x).
\end{equation*}
\end{lemma}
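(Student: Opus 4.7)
For any piecewise constant $v$ on $\Omega$ (a simple function relative to a locally finite partition of $\Omega$ into sets of finite perimeter), the distributional derivative $Dv$ is the pure-jump measure $[v]\otimes\nu_v\,\mathcal{H}^{N-1}\lfloor S_v$, with no absolutely continuous or Cantor part. Hence $\|Dv\|(\Omega)=\int_{S_v}|[v]|\,d\mathcal{H}^{N-1}$ automatically, and the second equality in the statement is free. The real content of the lemma is the existence of piecewise constant $u_n\to u$ in $L^1(\Omega;\mathbb{R}^d)$ with $\|Du_n\|(\Omega)\to\|Du\|(\Omega)$.

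My plan is a two-stage approximation concluded by a diagonal extraction. \emph{Stage one}: the Anzellotti--Giaquinta smooth density theorem in $BV$ (see \cite{AFP}) produces $v_m\in C^\infty(\Omega;\mathbb{R}^d)\cap W^{1,1}(\Omega;\mathbb{R}^d)$ with $v_m\to u$ in $L^1$ and $\int_\Omega|\nabla v_m|\,dx\to\|Du\|(\Omega)$. \emph{Stage two}: for each smooth (suitably truncated) $v_m$, discretize the range using a shifted cubical tiling of $\mathbb{R}^d$ at scale $\delta$. Given a shift $s\in[0,\delta)^d$, set $w_{m,\delta}^s(x):=y_k+s$ whenever $v_m(x)-s$ belongs to the cube $Q_k^\delta$ of side $\delta$ centered at $y_k\in\delta\mathbb{Z}^d$. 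The function $w_{m,\delta}^s$ is piecewise constant (its level sets are preimages of open cubes under the smooth map $v_m-s$, hence of finite perimeter), $\|v_m-w_{m,\delta}^s\|_\infty\le\delta\sqrt{d}/2$, and its jump set sits on the smooth hypersurfaces $\{v_m^{(i)}=k\delta+s_i\}$.

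To control the total variation I would apply the coarea formula componentwise and average over the shift $s$ via Fubini, obtaining
\[
\delta^{-d}\int_{[0,\delta)^d}\|Dw_{m,\delta}^s\|(\Omega)\,ds=\sum_{i=1}^d\int_\Omega|\nabla v_m^{(i)}|\,dx.
\]
A generic shift $s_{m,\delta}$ then controls $\|Dw_{m,\delta}^{s_{m,\delta}}\|(\Omega)$ by this quantity. Combining with the lower semicontinuity of $\|D\cdot\|$ under $L^1$-convergence, a diagonal extraction from $\{w_{m,\delta_k}^{s_{m,k}}\}$ yields the desired piecewise constant sequence.

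The principal obstacle is in the vector-valued case: the naive axis-aligned range cubes give only the $\ell^1$-style bound $\sum_i\int|\nabla v_m^{(i)}|\,dx$, which exceeds the Frobenius integral $\int|\nabla v_m|\,dx=\|Dv_m\|(\Omega)$. In the scalar case ($d=1$) the shifted step $u_h^s(x):=h\lfloor(u(x)-s)/h\rfloor+s$ directly realizes the sharp identity $h^{-1}\int_0^h\|Du_h^s\|(\Omega)\,ds=\|Du\|(\Omega)$, and the lemma follows at once. For $d\ge 2$, sharpness must be recovered either by replacing axis-aligned cubes by isotropically rotated polyhedral cells (averaging the range partitions over $SO(d)$) or by appealing to the one-dimensional slicing characterization of $BV$ to reduce the vector-valued total variation bound to directional scalar coarea identities.
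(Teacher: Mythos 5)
The paper states Lemma~\ref{ctap} without proof, invoking it as standard, so there is no argument of the authors' to compare against. Your reduction to the total-variation statement, the strict smooth approximation, and the shift-averaged coarea identity in the scalar case are all correct, and you are right to flag a genuine difficulty in the matrix-valued case. But the difficulty is more than a shortcoming of axis-aligned cubes, and neither rotated range cells nor slicing can remove it: for $d,N\ge 2$ the claimed equality $\|Du\|(\Omega)=\lim_n\|Du_n\|(\Omega)$, with $\|\cdot\|$ the Euclidean total variation, is unattainable by \emph{any} sequence of piecewise constant $u_n\to u$ in $L^1$ whenever $\nabla u$ has rank $\geq 2$ on a set of positive measure. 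The obstruction is structural: $Dv=[v]\otimes\nu_v\,\mathcal{H}^{N-1}\lfloor S_v$ has polar $dDv/d|Dv|$ of rank one and unit Frobenius norm, on which the nuclear norm $\|\cdot\|_*$ (sum of singular values) equals $1$; Reshetnyak's \emph{lower semicontinuity} theorem for convex, positively $1$-homogeneous, lower semicontinuous integrands (not the continuity version recorded as Theorem~\ref{Reshetnyak}) with integrand $\|\cdot\|_*$ then gives
\begin{equation*}
\liminf_{n\to+\infty}\|Du_n\|(\Omega)
=\liminf_{n\to+\infty}\int_\Omega \Big\|\frac{dDu_n}{d|Du_n|}\Big\|_*\,d|Du_n|
\;\geq\;\int_\Omega \Big\|\frac{dDu}{d|Du|}\Big\|_*\,d|Du|\,,
\end{equation*}
which strictly exceeds $\|Du\|(\Omega)$ unless $\nabla u$ is rank-one a.e. For $u(x)=x$ on $\Omega=(0,1)^2$ the right-hand side equals $2$ while $\|Du\|(\Omega)=\sqrt 2$, and your axis-aligned grid attains $2$ exactly; averaging over $SO(d)$ cannot help (the example is isotropic) and slicing just reproduces the componentwise bound you already have, so the $\ell^1$-type value you reached is not a defect of your tiling but the true infimum here.

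What your two stages together do establish, after a diagonal extraction, is $\limsup_n\|Du_n\|(\Omega)\leq \sqrt{d}\,\|Du\|(\Omega)$, and this is all the paper actually needs from Lemma~\ref{ctap}: in Lemma~\ref{nonempty} (estimate~\eqref{upbd}) and in the other invocations, only a bound $\limsup_n\|Dv_n\|(\Omega)\leq C\,\|Du\|(\Omega)$ enters. Read as a proof of that weaker but sufficient statement, your argument is essentially complete (one should also note, via the coarea formula, that a.e.\ shift produces finite-perimeter level sets). Read as a proof of the literal statement, it cannot be completed, because the literal statement is not correct in the matrix-valued case with the Euclidean total variation; the sharp version replaces the Frobenius norm on the absolutely continuous part by the nuclear norm.
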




The space of \textit{special functions of bounded variation} $SBV(\O ; {%
\mathbb{R}}^d)$, introduced 
in \cite{DGA} to study
free discontinuity problems, is the space of functions $u \in BV(\O ; {%
\mathbb{R}}^d)$ such that $D^cu = 0$, i.e. for which 
\begin{equation*}
Du = \nabla u {\mathcal{L}}^N + [u] \otimes \nu_u {\mathcal{H}}^{N-1}
\lfloor S_u .
\end{equation*}


The next result is a Lusin-type theorem for gradients due to Alberti \cite{AL}, and is essential for our arguments.

\begin{theorem}
\label{Al} Let $f \in L^1(\Omega; {\mathbb{R}}^{d\times N})$. There exists $%
u \in SBV(\Omega; {\mathbb{R}}^d)$ and a Borel function $g:
\Omega\rightarrow {\mathbb{R}}^{d\times N}$ such that 
\begin{equation*}
Du = f \mathcal{L}^N + g {\mathcal{H}}^{N-1}\lfloor S_u,
\end{equation*}
\begin{equation*}
\int_{S_u} |g| \; d{\mathcal{H}}^{N-1} \leq C ||f||_{L^1(\Omega; {\mathbb{R}}
^{d \times N})}.
\end{equation*}
Moreover, 
$$ ||u||_{L^1(\Omega)} \leq  C||f||_{L^1(\Omega; \Rb ^{d \times N})}.$$
\end{theorem}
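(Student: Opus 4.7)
The plan is to reduce to the scalar case $d=1$ (treating each component of $u$ separately) and then construct $u$ by superposing elementary ``building blocks'' adapted to a careful decomposition of $f$.

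First I would isolate a building block: given a cube $Q \subset \Omega$ with center $x_Q$ and a constant vector $a \in \mathbb{R}^N$, set
\begin{equation*}
u_{a,Q}(x) := \bigl(a \cdot (x - x_Q)\bigr)\, \chi_Q(x).
\end{equation*}
Then $u_{a,Q} \in SBV(\Omega)$, $\nabla u_{a,Q} = a\, \chi_Q$ almost everywhere, the jump set is contained in $\partial Q$ with $[u_{a,Q}] = a \cdot (x - x_Q)$ there, and direct estimates give
\begin{equation*}
\int_{S_{u_{a,Q}}} |g|\, d\mathcal{H}^{N-1} \leq C_N\, |a|\, \mathcal{L}^N(Q), \qquad \|u_{a,Q}\|_{L^1(\Omega)} \leq C_N\, \mathrm{diam}(Q)\, |a|\, \mathcal{L}^N(Q),
\end{equation*}
with $C_N$ depending only on $N$.

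Next I would approximate $f$ in $L^1(\Omega; \mathbb{R}^N)$ by piecewise-constant functions $f_n = \sum_k a_{n,k}\, \chi_{Q_{n,k}}$ on disjoint cubes, constructed by an iterative Calder\'on--Zygmund-type scheme chosen so that the diameters at level $n$ decay geometrically (say $\leq 2^{-n} \operatorname{diam}(\Omega)$), $f_n \to f$ in $L^1$, and
\begin{equation*}
\sum_{n \geq 1} \|f_n - f_{n-1}\|_{L^1(\Omega)} \leq C\, \|f\|_{L^1(\Omega)}.
\end{equation*}
Writing $f_n - f_{n-1}$ as $\sum_k b_{n,k}\, \chi_{R_{n,k}}$ and setting $v_n := \sum_k u_{b_{n,k}, R_{n,k}}$, the building-block estimates give $\nabla v_n = f_n - f_{n-1}$ a.e. together with $\|v_n\|_{L^1} \leq C\, 2^{-n}\, \|f\|_{L^1}$ and $|D^s v_n|(\Omega) \leq C \|f_n - f_{n-1}\|_{L^1}$. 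Defining $u := \sum_{n \geq 1} v_n$, the geometric decay of $\|v_n\|_{L^1}$ forces absolute convergence in $L^1$, and the summability of the singular parts combined with the closure of $SBV$ under this type of convergence (see, e.g., Ambrosio's compactness theorem) ensures $u \in SBV(\Omega)$ with $\nabla u = f$ a.e. and $|D^s u|(\Omega) \leq C \|f\|_{L^1}$. The Lebesgue--Radon--Nikod\'ym decomposition $Du = f\, \mathcal{L}^N + g\, \mathcal{H}^{N-1} \lfloor S_u$ then identifies the Borel function $g$ as the jump of $u$ along its rectifiable jump set, and assembling the bounds yields both prescribed inequalities.

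The main obstacle is precisely the construction of the approximating sequence $\{f_n\}$: a naive dyadic martingale $f_n = \mathbb{E}[f|\mathcal{F}_n]$ gives $\sum_n \|f_n - f_{n-1}\|_{L^1} = +\infty$, so the scheme must introduce new cubes only where the local mean of $f - f_{n-1}$ exceeds a geometrically decreasing threshold (a stopping-time / Calder\'on--Zygmund decomposition), so that the total mass of increments is comparable to $\|f\|_{L^1}$. Once this decomposition is in place, the remainder is bookkeeping.
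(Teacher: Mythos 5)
The paper does not prove this statement; it is quoted directly from Alberti's Lusin-type theorem \cite{AL}, so the only meaningful comparison is with that source. Your overall construction --- approximating $f$ by piecewise-constant functions on nested cube grids, building $u$ from tent blocks $u_{a,Q}$ on the increments, and summing --- is in the spirit of Alberti's argument, and the building-block estimates together with the summability bookkeeping are sound.

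The step that does not go through as stated is the appeal to ``Ambrosio's compactness theorem'' to conclude $u\in SBV$. That theorem requires a superlinear bound $\sup_n\int_\Omega\phi(|\nabla U_n|)\,dx<\infty$ together with a weight $\theta$ on jump sizes with $\theta(t)/t\to+\infty$ as $t\to 0^+$; you only have $\nabla U_n$ bounded in $L^1$ and $|D^sU_n|(\Omega)$ bounded, and those two bounds alone do not force an $L^1$-limit into $SBV$ (the partial sums of the Cantor--Vitali function satisfy both and converge to a function whose derivative is purely Cantor). However, the hypotheses your construction actually secures are much stronger and make the compactness theorem unnecessary: $\sum_n|Dv_n|(\Omega)<\infty$, so the measures $DU_n$ converge in total variation, and $Du=f\,\mathcal{L}^N+\mu$ with $\mu=\sum_n D^sv_n$ an absolutely convergent series of measures, each concentrated on a union of cube faces. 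Hence $\mu$ is carried by the countably $(N-1)$-rectifiable, $\mathcal{H}^{N-1}$-$\sigma$-finite set $\Sigma:=\bigcup_{n,k}\partial R_{n,k}$, while the Cantor part $D^cu$ must vanish on any such set; since $D^su=\mu$ is concentrated on $\Sigma$, this forces $D^cu=0$, giving $u\in SBV$ with $\nabla u=f$ and $|D^ju|(\Omega)\le\sum_n|D^sv_n|(\Omega)\le C\|f\|_{L^1}$. You should make this argument explicit rather than invoking a compactness theorem whose hypotheses you do not satisfy. A smaller remark: the Calder\'on--Zygmund stopping-time machinery is not needed. After reducing to $\Omega$ a cube by extending $f$ by zero, take $f_n=\mathbb{E}[f\,|\,\mathcal{G}_n]$ along a nested sequence of dyadic grids with mesh at most $2^{-n}\mathrm{diam}\,\Omega$ and chosen so fine that $\|f-f_n\|_{L^1}\le 2^{-n}\|f\|_{L^1}$; then $\|f_n-f_{n-1}\|_{L^1}\le 3\cdot 2^{-n}\|f\|_{L^1}$ is already summable without any stopping argument.
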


%


The following technical result is a simplified version of Lemma 4.3 in \cite{M}.

\begin{lemma}
\label{matias-lemma} Let $\O \subset {\mathbb{R}}^N$ be open and bounded and
let $A\in {\mathbb{R}}^{d\times N}$. Then there exists $u\in SBV(\O ;{%
\mathbb{R}}^{d})$ such that $u|_{\partial \O }=0$ and $\nabla u=A$
a.e in $\O .$ In addition 
\begin{equation*}
\|D^{s}u\|(\O )\leq C(N) |A|\,|\O |.
\end{equation*}
\end{lemma}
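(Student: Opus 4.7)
The plan is to construct $u$ directly as a piecewise-affine (``sawtooth'') function adapted to a fine partition of $\O$. I would take a Whitney decomposition $\{Q_i\}_{i\in I}$ of the open set $\O$ into closed cubes with pairwise disjoint interiors, side lengths $\delta_i$ comparable to $\mathrm{dist}(Q_i,\partial \O)$, and centers $z_i$. On each cube I would set
\begin{equation*}
u(x) := A(x - z_i) \qquad \text{for } x \in \inte(Q_i).
\end{equation*}

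First I would verify that $u$ is an $SBV$ function with the correct absolutely continuous gradient. On each open cube $u$ is affine with $\nabla u = A$, and since the cube interiors cover $\O$ up to an $\mathcal{H}^{N-1}$-null set, $\nabla u = A$ a.e.\ in $\O$. The singular part is concentrated on the $(N-1)$-rectifiable union of Whitney-cube faces (so $D^c u = 0$), and for the boundary condition the bound $|u(x)| \le \tfrac{\sqrt N}{2}|A|\delta_i$ on $Q_i$, together with $\delta_i \to 0$ as $Q_i$ approaches $\partial \O$ (Whitney property), forces $u \to 0$ in the trace sense on $\partial \O$, i.e.\ the extension of $u$ by $0$ outside $\O$ is well-defined in $SBV$.

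The main estimate would then come from a scaling computation cube by cube. Across the common face of two adjacent cubes $Q_i,Q_j$, which have comparable side lengths by Whitney's theorem, the jump is $[u](x) = A(z_j - z_i)$ and has modulus $\le C(N)|A|\delta_i$, while the face area is at most $2N\delta_i^{N-1}$. Hence each cube contributes at most $C(N)|A|\delta_i\cdot 2N\delta_i^{N-1} = C(N)|A|\,|Q_i|$ to $\|D^s u\|(\O)$, and summing (counting each face at most twice) yields $\|D^s u\|(\O) \le C(N)|A|\sum_i |Q_i| = C(N)|A|\,|\O|$, which is precisely the asserted bound.

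The key observation, and the reason a Whitney-type partition is preferable to a uniform grid here, is the scaling identity $\delta_i \cdot \delta_i^{N-1} = |Q_i|$: the jump magnitude and the face area conspire so that each cube contributes a singular mass comparable to its own volume, producing a constant depending only on $N$. A uniform grid would force a trade-off between keeping $\nabla u = A$ a.e.\ in $\O$ and enforcing the zero trace on $\partial \O$, requiring a separate treatment of a boundary layer; Whitney's partition eliminates this tension, since its cubes shrink automatically near $\partial \O$ and thereby simultaneously drive $u$ to $0$ there and keep jump magnitudes small. This automatic matching of the boundary behavior to the interior sawtooth is the only mildly subtle point; the rest of the argument is a direct counting.
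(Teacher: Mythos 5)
The paper does not include a proof of this lemma; it simply cites it as a simplification of \cite[Lemma 4.3]{M}, so there is no in-paper argument to compare with. That said, your Whitney-decomposition sawtooth construction is correct and is the natural route to such a statement. The scaling you isolate --- jump magnitude of order $|A|\delta_i$ times total face area of order $\delta_i^{N-1}$ gives a per-cube contribution of order $|A|\,|Q_i|$, hence $\|D^s u\|(\O)\le C(N)|A|\,|\O|$ after summing --- is exactly the right mechanism, and the Whitney bound $\delta_i\lesssim\mathrm{dist}(Q_i,\partial\O)$ does genuine double duty, controlling both the jump magnitudes and the decay of $u$ at the boundary.

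Two small points to repair or make explicit. First, the set $\O\setminus\bigcup_i\inte Q_i=\O\cap\bigcup_i\partial Q_i$ is $\mathcal L^N$-null, \emph{not} $\mathcal H^{N-1}$-null; it has $\sigma$-finite and in general infinite $\mathcal H^{N-1}$-measure. Since you only need $\nabla u=A$ for $\mathcal L^N$-a.e.\ $x$, the conclusion is unaffected, but the stated premise is wrong as written. Second, you should record explicitly why $Du$ is a \emph{finite} measure on $\O$: the per-cube contributions are summable precisely because $\sum_i|Q_i|=|\O|<\infty$, which is where boundedness of $\O$ enters; with that in hand $u\in BV(\O;\mathbb R^d)$, and since $D^s u$ is carried by a countably $(N-1)$-rectifiable set with locally bounded jump density, $D^c u=0$ and $u\in SBV(\O;\mathbb R^d)$. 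Finally, note that for a general open bounded $\O$ the classical $BV$-trace on $\partial\O$ need not exist; the correct reading of $u|_{\partial\O}=0$, which your uniform bound $|u|\le C|A|\,\mathrm{dist}(\cdot,\partial\O)$ near the boundary delivers, is that the extension of $u$ by zero across $\partial\O$ has no jump there. In the paper's actual applications $\O$ is always a cube (or a difference of cubes), so the Lipschitz-boundary trace is available and vanishes.
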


Following 
\cite{CLT,CLT1}, we
define 
\begin{equation*}
SBV^2(\Omega; {\mathbb{R}}^d):=\{v\in SBV(\Omega; {\mathbb{R}}^d): \,\,
\nabla v \in SBV(\O ;{\mathbb{R}}^{d\times N})\}.
\end{equation*}
If $u\in SBV^2(\Omega; {\mathbb{R}}^d)$ we use the notation $\nabla^2 u= \nabla (\nabla u)$ 
to denote the absolutely
continuous part of $D(\nabla u)$ with respect to the Lebesgue measure. 
Analogously, we let
\begin{equation*}
BV^2(\Omega; {\mathbb{R}}^d)=\{v\in BV(\Omega; {\mathbb{R}}^d): \,\, \nabla
v \in BV(\O ;{\mathbb{R}}^{d\times N})\}.
\end{equation*}

\section{Statement of the problem and main result}\label{sect:statement}

We define a second order structured deformation as a triplet 
\begin{equation*}
(g,G, \Gamma)\in SBV^2(\Omega; {\mathbb{R}}^d)\times SBV(\Omega; {\mathbb{R}}%
^{d\times N})\times L^1(\Omega; {\mathbb{R}}^{d\times N\times N}).
\end{equation*}
The set of second order structured deformations will be denoted in the
sequel by $SD^2(\Omega; {\mathbb{R}}^d)$.

Given a function $u\in SBV^2(\Omega;\R{d})$, consider the energy defined by 
\begin{equation}  \label{IgG}
\begin{split}
E(u) := \int_{\Omega} W(x, \nabla u(x), \nabla^2 u(x))\, dx &+
\int_{S_u}\Psi_1(x,[u(x)], \nu_u(x) ) \, d\mathcal{H}^{N-1}(x) \\
&+ \int_{S_{\nabla u}}\Psi_2 (x, [\nabla u(x)], \nu_{\nabla u}(x) ) \, d%
\mathcal{H}^{N-1}(x),
\end{split}%
\end{equation}
where the densities $W : \O \times {\mathbb{R}}^{d\times N} \times {\mathbb{R%
}}^{d\times N\times N} \to [0,+\infty[$, $\Psi_1 : \O \times {\mathbb{R}}%
^{d} \times S^{N-1} \to [0,+\infty[$ and $\Psi_2 : \O \times {\mathbb{R}}^{d
\times N} \times S^{N-1} \to [0,+\infty[$ satisfy the following hypotheses:

\begin{enumerate}
\item[(H1)] there exists $C >0$ such that 
\begin{equation*}
\frac{1}{C} (|A| + |M|) -C\leq W(x, A,M) \leq C \big( 1 + |A| + |M|\big)
\end{equation*}
\noindent for all $x \in \Omega, A \in {\mathbb{R}}^{d\times N}$ and $M \in {%
\mathbb{R}}^{d\times N\times N}$;

\item[(H2)] there exists $C >0$ such that 
\begin{equation*}
|W(x, A_1, M_1) -W(x, A_2, M_2)| \leq C\big(|A_1-A_2| + |M_1-M_2|\big)
\end{equation*}
\noindent for all $x \in \Omega, A_i \in {\mathbb{R}}^{d\times N}$ and $%
M_i\in {\mathbb{R}}^{d\times N\times N}$, $i=1,2$;

\item[(H3)] for every $x_0 \in \Omega$ and for every $\varepsilon > 0$ there
exists a $\delta > 0$ such that 
\begin{equation*}
|x - x_0| < \delta \Rightarrow |W(x, A, M) -W(x_0, A, M)| \leq \varepsilon
C( 1 + |A| + |M|),
\end{equation*}
\noindent for all $x \in \Omega, A \in {\mathbb{R}}^{d\times N}$ and $M \in {%
\mathbb{R}}^{d\times N\times N}$;

\item[(H4)] there exist $0 < \alpha < 1$ and $L>0$ such that 
\begin{equation*}
\Big |W^{\infty}(x, A,M) - \frac{W(x, A, tM)}{t}\Big| \leq \frac{C}{%
t^{\alpha}}
\end{equation*}
\noindent for all $t>L$, $x \in \Omega, A \in {\mathbb{R}}^{d\times N},$ $%
M\in {\mathbb{R}}^{d\times N\times N}$ with $|M| =1,$ where $W^{\infty}$
denotes the \textit{recession function} of $W$ in the variable $M$, i.e., 
\begin{equation*}
W^{\infty}(x, A, M) = \limsup_{t\rightarrow +\infty} \frac {W(x, A, tM)}{t};
\end{equation*}

\item[(H5)] there exist $c_1 > 0, K_1 > 0,$ such that 
\begin{equation*}
c_1|\lambda| \leq \Psi_1(x, \lambda, \nu) \leq K_1|\lambda |,
\end{equation*}
for all $x \in \Omega, \lambda \in {\mathbb{R}}^d$ and $\nu \in S^{N-1};$

\noindent there exist $c_2 > 0, K_2 > 0,$ such that 
\begin{equation*}
c_2|\Lambda| \leq \Psi_2(x, \Lambda, \nu) \leq K_2|\Lambda |,
\end{equation*}
for all $x \in \Omega, \Lambda \in {\mathbb{R}}^{d\times N}$ and $\nu \in
S^{N-1};$

\item[(H6)] for every $x_0 \in \Omega$ and for every $\varepsilon > 0$ there
exist $\delta > 0$ and $C_1, C_2 > 0$ such that 
\begin{equation*}
|x - x_0| < \delta \Rightarrow | \Psi_1(x_0, \lambda, \nu) - \Psi_1(x,
\lambda, \nu)| \leq \varepsilon C_1|\lambda|,
\end{equation*}
\begin{equation*}
|x - x_0| < \delta \Rightarrow | \Psi_2(x_0, \Lambda, \nu) - \Psi_2(x,
\Lambda, \nu)| \leq \varepsilon C_2|\Lambda|
\end{equation*}
for all $\lambda \in {\mathbb{R}}^d$, $\Lambda \in {\mathbb{R}}^{d\times N}$
and $\nu \in S^{N-1};$

\item[(H7)] (homogeneity of degree one) 
\begin{equation*}
\Psi_1(x, t\lambda, \nu) = t\Psi_1(x, \lambda, \nu), \hspace{0.3cm}
\Psi_2(x, t\Lambda, \nu)= t\Psi_2(x, \Lambda, \nu),
\end{equation*}
for all $x \in \Omega, \nu \in S^{N-1}, \lambda \in {\mathbb{R}}^N, \Lambda
\in {\mathbb{R}}^{d\times N}$ and $t >0$;

\item[(H8)] (sub-additivity) 
\begin{equation*}
\Psi_1(x, \lambda_1 + \lambda_2, \nu) \leq \Psi_1(x, \lambda_1, \nu) +
\Psi_1(x, \lambda_2, \nu),
\end{equation*}
\begin{equation*}
\Psi_2(x, \Lambda_1 + \Lambda_2, \nu) \leq \Psi_2(x, \Lambda_1, \nu) +
\Psi_2(x, \Lambda_2, \nu),
\end{equation*}
for all $x \in \Omega,\nu \in S^{N-1}, \lambda_i \in {\mathbb{R}}^d,
\Lambda_i \in {\mathbb{R}}^{d\times N}, \; i=1,2.$
\end{enumerate}

\begin{remark} \label{rem3.1}
\begin{enumerate}
\item We extend $\Psi_i, i =1,2$ as homogeneous functions of degree one in the third variable 
to all of $\Rb^N$.
\item The hypotheses listed above are similar to the ones in \cite{CF} and \cite{BMS} 
where there is no explicit dependence on $x$, and with the hypotheses in \cite{BBBF} where the density functions
depended explicitly on the variable $x$. 
\item It is well known that the bulk energy may have potential wells and for this reason it is desirable to consider 
$$0 \leq W(x, A, M) \leq C(1 + |A| + |M|),$$
instead of (H1). However, following the same arguments as in \cite{CF}, the coercivity assumption can be removed.
\item In the case of no explicit dependence on the position variable $x$, the coercivity hypothesis on the interfacial energy densities can be replaced by the extra condition that admissible sequences are bounded in $BV^2$-norm.
This standard modification of our model covers the case of the example in Section \ref{sect:app}.

\item It follows immediately from the definition of the recession function and from hypotheses (H1), (H2) and (H3) that
there exists $C >0$ such that for all $ x \in \Omega, A_i \in \Rb^{d\times N}$ and $M_i \in \Rb^{d\times N\times N}$, $i=1,2$
\begin{equation}\label{h1infty}
\frac{1}{C}|M_1| \leq W^{\infty}(x, A_1,M_1) \leq C |M_1|;
\end{equation}
\begin{equation}\label{h2infty}
|W^{\infty}(x, A_1, M_1) -W^{\infty}(x, A_2, M_2)| \leq C |M_1-M_2|
\end{equation}
and, for every $x_0 \in \Omega$ and for every $\varepsilon > 0$ there exists a $\delta > 0$ such that
\begin{equation}\label{h3infty}
|x - x_0| < \delta \Rightarrow |W^{\infty}(x, A_1, M_1) -W^{\infty}(x_0, A_1, M_1)| \leq \varepsilon C |M_1|.
\end{equation}
\end{enumerate}
\end{remark}

\smallskip

Consider now the relaxed energy 
\begin{equation}  \label{200}
I(g,G,\Gamma):= \inf_{\{u_n\}\subset SBV^2(\Omega;\R{d})} \Big\{ \liminf_{n\to
+\infty} E(u_n) : u_n\overset{L^1}{\to} g, \nabla u_n \overset{L^1}{\to}G,
\nabla^2 u_n \overset{\ast}{\rightharpoonup} \Gamma\Big\}.
\end{equation}

The main result of this work reads as follows

\begin{theorem}
\label{main} For all $(g,G,\Gamma)\in SD^{2}(\Omega ;\R{d})$, under hypotheses
(H1) - (H8), we have that 
\begin{equation}
\begin{split}
I(g,G,\Gamma)=& \int_{\Omega }\{ W_{1}(x,G(x)-\nabla
g(x))+W_{2}(x,G(x),\nabla G(x),\Gamma(x))\} \,dx \\
& +\int_{S_{g}\cap \Omega }\gamma _{1}(x,[g(x)],\nu _{g}(x))\,d\cH^{N-1}(x)
\\
& +\int_{S_{G}\cap \Omega }\gamma _{2}(x,G(x),[G(x)],\nu _{G}(x))\,d\cH%
^{N-1}(x),
\end{split}%
\end{equation}%
where, for $x\in \Omega $, $A,\Lambda \in {\mathbb{R}}^{d\times N}$, $L,M\in 
{\mathbb{R}}^{d\times N\times N}$, $\lambda \in {\mathbb{R}}^{d}$ and $\nu
\in S^{N-1},$ 
\begin{equation*}
W_{1}(x,A)=\inf_{u\in SBV^{2}(Q;{\mathbb{R}}^{d})}\left\{ \int_{S_{u}\cap
Q}\Psi _{1}(x,[u(y)],\nu _{u}(y))\,d\mathcal{H}^{N-1}(y):\;u|_{\partial Q}=0,\;\nabla u=A\;\text{a.e. in}\;Q\right\} ,
\end{equation*}%
\begin{eqnarray*}
&&\gamma _{1}(x,\lambda ,\nu )=\inf_{u\in SBV^{2}(Q_{\nu };{\mathbb{R}}%
^{d})}\left\{ \int_{S_{u}\cap Q_{\nu }}\Psi _{1}(x,[u(y)],\nu _{u}(y))\,d%
\mathcal{H}^{N-1}(y):u|_{\partial Q_{\nu }}=\gamma _{(\lambda ,\nu
)},\right. \\
&&\hspace{6cm}\nabla u=0\;\text{a.e. in}\;Q_{\nu }\Big\},
\end{eqnarray*}%
with 
\begin{equation*}
\gamma _{(\lambda ,\nu )}=\left\{ 
\begin{array}{ll}
\lambda & \text{if}\;x\cdot\nu >0 \\ 
&  \\ 
0 & \mbox{if}\;x\cdot\nu <0, \\ 
& 
\end{array}%
\right.
\end{equation*}%
and 
\begin{eqnarray*}
&&W_{2}(x,A,L,M)=\hspace{0cm}\inf_{u\in SBV(Q;{\mathbb{R}}^{d\times N})}%
\hspace{0cm}\left\{ \int_{Q}W(x,A,\nabla u(y))\,dy+\int_{S_{u}\cap Q}\hspace{%
0cm}\Psi _{2}(x,[u(y)],\nu _{u}(y))\,d\cH^{N-1}(y):\right. \\
&&\left. \hspace{3cm}u|_{\partial Q}(y)=L\cdot y,\;\int_{Q}\nabla
u(y)\,dy=M\right\} ,
\end{eqnarray*}%
\begin{eqnarray*}
&&\gamma _{2}(x,A,\Lambda ,\nu )=\hspace{0cm}\inf_{u\in SBV(Q_{\nu };{%
\mathbb{R}}^{d\times N})}\hspace{0cm}\left\{ \int_{Q_{\nu }}\hspace{0cm}%
W^{\infty }(x,A,\nabla u(y))\,dy+\int_{S_{u}\cap Q_{\nu }}\hspace{0cm}\Psi
_{2}(x,[u(y)],\nu _{u}(y))\,d\cH^{N-1}(y):\right. \\
&&\left. \hspace{3cm}u|_{\partial Q_{\nu }}=\gamma _{(\Lambda ,\nu
)},\;\int_{Q_{\nu }}\nabla u(y)\,dy=0\right\} .
\end{eqnarray*}
\end{theorem}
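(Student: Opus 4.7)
The natural strategy is to reduce the proof to two first-order structured-deformation relaxations, each of which can be treated by the blow-up plus Alberti scheme of Choksi--Fonseca \cite{CF} adapted to the inhomogeneous setting of \cite{BBBF}. Concretely, I would invoke the decomposition to be established in Section \ref{sect:prelres}, which writes
\[
I(g,G,\Gamma) = I_{1}(g,G) + I_{2}(G,\Gamma),
\]
where $I_1$ is the infimum over $\{u_n\}$ of the $\Psi_1$-interfacial energy alone, subject to $u_n\to g$ and $\nabla u_n\to G$ in $L^1$, and $I_2$ is the infimum over $\{v_n\}\subset SBV$ of the $W(x,G,\cdot)$-bulk plus $\Psi_2$-interfacial energy, subject to $v_n\to G$ in $L^1$ and $\nabla v_n\stackrel{\ast}{\rightharpoonup}\Gamma$. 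Upon this reduction, $I_1$ generates $W_1$ and $\gamma_1$ just as in the first-order theory, while $I_2$, in which $(G,\Gamma)$ plays the role of $(g,G)$ and $\nabla G$ the role of $\nabla g$, generates $W_2$ and $\gamma_2$.

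For the lower bound I would apply the Fonseca--M\"uller blow-up method separately to each $I_j$. For $I_2$, given a competing sequence $\{v_n\}$, introduce the energy measure
\[
\mu_n := W(\cdot,G(\cdot),\nabla v_n)\,\mathcal{L}^{N}\lfloor\Omega + \Psi_{2}(\cdot,[v_n],\nu_{v_n})\,\mathcal{H}^{N-1}\lfloor S_{v_n},
\]
extract a weak-$\ast$ subsequential limit $\mu\in\mathcal{M}^{+}(\Omega)$, and compute its Radon--Nikod\'ym derivatives with respect to $\mathcal{L}^{N}$ and to $\mathcal{H}^{N-1}\lfloor S_G$. The recession hypothesis (H4) together with \eqref{h1infty}--\eqref{h3infty} produces $W^{\infty}$ in the $\gamma_2$ cell formula, while (H3) and (H6) freeze the $x$-dependence inside each blow-up cube and return exactly the stated infima. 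The case of $I_1$ is analogous but uses only $\Psi_1$, with the cubic constraints $\nabla u = A$ a.e. arising from the $L^1$-limit $\nabla u_n\to G$ combined with $u_n\to g$.

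For the upper bound I would construct recovery sequences by a gluing procedure. Off the jump sets, Alberti's Lusin-type theorem (Theorem \ref{Al}) provides $SBV$ functions with prescribed absolutely-continuous part of the gradient and controlled $\|D^s u\|$, realizing both the $W_1$ competitors and the mean-gradient constraint in $W_2$; these are pasted together on a fine Vitali covering of $\Omega$ and joined to the data using Lemma \ref{matias-lemma}. On the jump sets, the polyhedral approximation of Theorem \ref{BaldoL3.1}, combined with Reshetnyak's continuity (Theorem \ref{Reshetnyak}) and the piecewise-constant approximation of Lemma \ref{ctap}, reduces the construction to planar facets, on each of which the optimal profile from the $\gamma_1$ or $\gamma_2$ cell problem is inserted. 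A standard diagonalization then delivers a single $SBV^2$-sequence realizing all three convergences.

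The main obstacle is the compatibility among the three prescribed convergences $u_n\to g$, $\nabla u_n\to G$ in $L^1$ and $\nabla^2 u_n\stackrel{\ast}{\rightharpoonup}\Gamma$: a recovery sequence built at one level must not spoil the convergence at the other. In particular, when Alberti's construction is used at the second-order level to adjust $\nabla u_n$ so that its average matches $\Gamma$, the resulting first-order perturbation typically creates spurious $\Psi_1$-interfaces; controlling them requires the coercivity bounds (H1), (H5), the sub-additivity (H8) and a careful localization of cutoffs to sets of small measure. The same surgery is what underpins the decomposition $I=I_1+I_2$: the inequality $I\le I_1+I_2$ is easy, while the matching inequality $I\ge I_1+I_2$ forces one to split an arbitrary admissible sequence into two sequences realizing the individual infima without introducing uncontrolled cross-terms, and this is in effect the crux of the decomposition lemma from Section \ref{sect:prelres}.
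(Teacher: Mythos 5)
Your proposal follows the same overall route as the paper: decompose $I = I_1 + I_2$, prove the lower bound by the Fonseca--M\"uller blow-up method, and build recovery sequences for the upper bound using Alberti's Lusin-type theorem, Lemma \ref{matias-lemma}, Lemma \ref{ctap}, Baldo's polyhedral approximation (Theorem \ref{BaldoL3.1}), Reshetnyak's theorem, and diagonalization. Your identification of the key difficulty (that second-order Alberti corrections introduce spurious $\Psi_1$-interfaces that must be controlled by (H1), (H5), (H8) and by confining the gluing to sets of small measure) is accurate and is precisely what drives the surgery in both the decomposition lemma and the upper-bound constructions. A minor notational point: in the paper's Theorem \ref{decomposition}, the class of competitors for $I_1$ still carries the constraint $\nabla^2 u_n\stackrel{\ast}{\rightharpoonup}\Gamma$, so it is written $I_1(g,G,\Gamma)$, even though the final integral formula for $I_1$ does not depend on $\Gamma$.

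However, you have reversed the easy and hard directions of the decomposition, and as written that step would not execute. The inequality $I(g,G,\Gamma) \ge I_1 + I_2$ is the trivial one: a single admissible sequence $\{u_n\}$ for $I$ immediately furnishes competitors for both $I_1$ (namely $u_n$ itself) and $I_2$ (namely $v_n := \nabla u_n$), and since the three terms of $E(u_n)$ are non-negative one gets $\liminf E(u_n) \ge \liminf[\Psi_1\text{-part}] + \liminf[W+\Psi_2\text{-part}] \ge I_1 + I_2$. The genuinely hard direction is $I \le I_1 + I_2$: here one is given two \emph{separate} near-optimal sequences, $\{u_n\}$ for $I_1$ and $\{v_n\}$ for $I_2$, and must combine them into a single $SBV^2$ sequence $\{w_n\}$ with $\nabla w_n = v_n$ while the $\Psi_1$-energy of $w_n$ does not exceed that of $u_n$ by an uncontrolled amount. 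That combination is exactly where the Alberti + piecewise-constant surgery you describe enters (in the paper, $w_n := u_n + h_n - \tilde h_n$ with $\nabla h_n = v_n - \nabla u_n$), and it is there that the cross-term control you flagged is needed. So the subtlety you isolated is real, but it attaches to the \emph{combining} step in $I \le I_1 + I_2$, not to a splitting step in $I \ge I_1 + I_2$.
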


\section{preliminary results}\label{sect:prelres}

In this section we derive some preliminary results which will be used in the
proof of the main theorem.

\begin{lemma}
\label{nonempty} Let $(g,G,\Gamma)\in SD^2(\Omega;\R{d})$. Then $I(g,G,\Gamma)<+\infty$%
.
\end{lemma}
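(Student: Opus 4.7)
The plan is to produce an explicit admissible competitor $\{u_n\}\subset SBV^2(\Omega;\mathbb{R}^d)$ with uniformly bounded energy, built via a piecewise second-order Taylor-like interpolation on a shrinking mesh of cubes. Since $I(g,G,\Gamma)$ is defined as an infimum, producing a single admissible sequence with $\sup_n E(u_n)<+\infty$ gives the result immediately.

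Choose $\varepsilon_n\to 0^+$ and partition $\Omega$, up to a negligible boundary layer, into a regular grid of cubes $\{Q_j^n\}_j$ of side $\varepsilon_n$ centered at $x_j^n$. Set
$$g_j^n := \frac{1}{|Q_j^n|}\int_{Q_j^n}g,\qquad G_j^n := \frac{1}{|Q_j^n|}\int_{Q_j^n}G,\qquad \Gamma_j^n := \frac{1}{|Q_j^n|}\int_{Q_j^n}\Gamma,$$
and define, for $y\in Q_j^n$,
$$u_n(y) := g_j^n + G_j^n(y-x_j^n) + \tfrac12\,\Gamma_j^n(y-x_j^n)^{\otimes 2}.$$
Then $u_n$ is polynomial on each cube with $\nabla u_n(y)=G_j^n+\Gamma_j^n(y-x_j^n)$ and $\nabla^2 u_n(y)=\Gamma_j^n$ on $Q_j^n$, so $u_n\in SBV^2(\Omega;\mathbb{R}^d)$ with $S_{u_n}$ and $S_{\nabla u_n}$ contained in the union of interior cube faces. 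Admissibility of $\{u_n\}$ follows from the $L^1$-convergence of cube averages (a standard consequence of Lebesgue differentiation plus continuity of translation in $L^1$) together with the smallness $|y-x_j^n|\leq\sqrt{N}\,\varepsilon_n/2$: one obtains $u_n\to g$, $\nabla u_n\to G$, and $\nabla^2 u_n\to\Gamma$ in $L^1$, in particular the required $\weakst$ convergence.

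The core step is the energy estimate. The bulk term is controlled by (H1) together with $\|\nabla u_n\|_{L^1}\to\|G\|_{L^1}$ and $\|\nabla^2 u_n\|_{L^1}\to\|\Gamma\|_{L^1}$, giving
$$\int_\Omega W(x,\nabla u_n,\nabla^2 u_n)\,dx \leq C\bigl(1+\|\nabla u_n\|_{L^1}+\|\nabla^2 u_n\|_{L^1}\bigr) \leq C.$$
By (H5), the interfacial contributions reduce to bounding $\int_{S_{u_n}}|[u_n]|\,d\mathcal{H}^{N-1}$ and $\int_{S_{\nabla u_n}}|[\nabla u_n]|\,d\mathcal{H}^{N-1}$. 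On a face shared by adjacent cubes $Q_j^n$, $Q_k^n$ with $x_k^n=x_j^n+\varepsilon_n e_i$, direct computation yields
$$|[u_n]|\leq |g_k^n-g_j^n|+C\varepsilon_n(|G_j^n|+|G_k^n|)+C\varepsilon_n^2(|\Gamma_j^n|+|\Gamma_k^n|),$$
and analogously $|[\nabla u_n]|\leq |G_k^n-G_j^n|+C\varepsilon_n(|\Gamma_j^n-\Gamma_k^n|+|\Gamma_k^n|)$. After multiplying by $\varepsilon_n^{N-1}$ (the face measure) and summing, the crucial inequality is the discrete BV translation estimate
$$\sum_j\varepsilon_n^{N-1}|g_{j+e_i}^n-g_j^n| \leq \varepsilon_n^{-1}\int_\Omega|g(x+\varepsilon_n e_i)-g(x)|\,dx \leq \|Dg\|(\Omega),$$
and the analogous bound with $G$ in place of $g$ (finite since $G\in SBV$); the remaining terms contribute $O(\|G\|_{L^1})$, $O(\|\Gamma\|_{L^1})$, and vanishing corrections of order $O(\varepsilon_n)$.

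The main obstacle is precisely this final estimate: since there are $O(\varepsilon_n^{-1})$ faces per unit volume, one must carefully balance the $\varepsilon_n$-smallness of each individual jump against the proliferating face count. The BV regularity of both $g$ and $G$, which allows the translation estimate $\int_\Omega|f(x+h)-f(x)|\,dx\leq |h|\,\|Df\|(\Omega)$, is exactly what makes this balance work, and it is the only non-routine input of the argument.
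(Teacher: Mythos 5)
Your construction is correct, and it takes a genuinely different route from the paper. The paper's proof (Lemma \ref{nonempty}) uses Alberti's Lusin-type theorem for gradients (Theorem \ref{Al}) together with the piecewise-constant approximation of Lemma \ref{ctap}: first one builds $h\in SBV(\Omega;\mathbb{R}^{d\times N})$ with $\nabla h=\Gamma$, corrects by a piecewise-constant $v_n$ to obtain $w_n=v_n+h\to G$ with $\nabla w_n=\Gamma$, then integrates once more via Alberti and another piecewise-constant correction to produce $u_n\in SBV^2(\Omega;\mathbb{R}^d)$ with $\nabla u_n=w_n\to G$ and $\nabla^2 u_n=\Gamma$ \emph{exactly}. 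Your piecewise quadratic interpolant instead produces $\nabla^2 u_n\to\Gamma$ only in $L^1$ (still more than the required weak-$\ast$ convergence) and avoids Alberti's theorem entirely; the key mechanism replacing it is the discrete BV translation estimate, applied to both $g$ and $G$, to defeat the $O(\varepsilon_n^{-1})$ proliferation of cube-face measure. Both routes give the same qualitative bound $I(g,G,\Gamma)\leq C\big(1+\|Dg\|(\Omega)+\|G\|_{L^1}+\|DG\|(\Omega)+\|\Gamma\|_{L^1}\big)$ as in \eqref{upbd}, which is what is needed later for the localized estimates \eqref{ubI_1}--\eqref{ubI_2}.

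One detail you should tighten is the treatment near $\partial\Omega$. If $u_n$ is set to $0$ on the boundary layer, the interfaces between the grid region and the layer carry jumps of size $\sim|g_j^n|$, and their contribution is governed by the trace $\int_{\partial\Omega}|g|\,d\mathcal{H}^{N-1}$ rather than by $\|Dg\|(\Omega)$ alone; this is finite for Lipschitz $\Omega$ but must be said. Relatedly, the inequality $\int_{\Omega}|g(x+\varepsilon_n e_i)-g(x)|\,dx\leq\varepsilon_n\|Dg\|(\Omega)$ is valid only after extending $g$ by zero (which adds the trace term to the right-hand side) or after restricting the sum to faces at distance $\geq\varepsilon_n$ from $\partial\Omega$. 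A clean repair is to extend $(g,G,\Gamma)$ by zero to $\mathbb{R}^N$, run the grid construction globally, and restrict to $\Omega$; then the translation estimates pick up exactly the trace term and everything closes. These are routine fixes, but as written the proposal glosses over them.
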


\begin{proof}
Let $(g,G,\Gamma) \in SD^2(\Omega; {\mathbb{R}}^{d})$ be given. By applying
Theorem \ref{Al}, there exists $h \in SBV(\Omega; {\mathbb{R}}^{d\times N})$
such that $\nabla h=\Gamma$ a.e. in $\Omega$ and 
\begin{equation}  \label{203}
\|D^s h\|(\Omega) \leq C||\Gamma||_{L^1(\Omega; {\mathbb{R}}^{d{\times}N{\times}%
N})},
\end{equation}
for some $C = C(N) > 0$. By Lemma \ref{ctap} there exists a sequence $%
\{v_n\}\subset L^1(\Omega;\R{d})$ of piecewise constant functions such that $%
v_ n\overset{L^1}{\to} G - h$ and 
\begin{equation}  \label{204}
\|Dv_n\|(\Omega) = \|D^sv_n\|(\Omega) \mathrel{}\mathop{\longrightarrow}%
\limits^{}_{n \to +\infty}\|DG - Dh\|(\Omega).
\end{equation}
Define $w_n \in SBV(\Omega; {\mathbb{R}}^{d \times N})$ by $w_n := v_n + h$.
We have $w_n \to G$ in $L^1(\Omega; {\mathbb{R}}^{d\times N})$ and $\nabla
w_n=\Gamma$ a.e. in $\Omega$. By applying again Theorem \ref{Al}, for every $n
\in {\mathbb{N}}$, there exists $\tilde{h}_n \in SBV(\Omega; {\mathbb{R}}^d)$
such that $\nabla \tilde{h}_n=w_n$ a.e. in $\Omega$ and 
\begin{equation}  \label{205}
\|D^s \tilde{h}_n\|(\Omega)\leq C\|w_n\|_{L^1(\Omega; {\mathbb{R}}^{d\times
N})}.
\end{equation}
By Lemma \ref{ctap}, for every $n\in {\mathbb{N}} $, there exists a sequence 
$\{ \cl{h}_{n,m} \} \subset L^1(\Omega;{\mathbb{R}}^{d})$ of piecewise
constant functions such that $\cl{h}_{n,m} \overset{L^1}{\to} g - \tilde{h}%
_n $ as $m\to +\infty$ and 
\begin{equation*}
\|D^s\cl{h}_{n,m}\|(\Omega) \mathrel{}\mathop{\longrightarrow}\limits^{}_{m
\to +\infty}\|Dg - D\tilde{h}_n\|(\Omega).
\end{equation*}
Thus, for every $n\in{\mathbb{N}}$, there exists $m(n)\in{\mathbb{N}}$ such
that 
\begin{equation}  \label{202}
||\cl h_{n,m(n)}-(g-\tilde h_n)||_{L^1(\Omega;\R{d})}<\frac1n,\qquad \left|
\|D^s\cl h_{n,m(n)}\|(\Omega)-\|Dg-D\tilde h_n\|(\Omega)\right|<\frac1n.
\end{equation}
Hence the sequence $u_n:=\tilde{h}_n + \cl{h}_{n,m(n)}$ is such that $u_n\to
g$ in $L^1(\Omega;\R{d})$, $\nabla u_n = w_n \to G$ in $L^1(\Omega;%
\R{d{\times}N})$ and $\nabla^2 u_n=\Gamma$, so that it is a competitor for the
infimization problem \eqref{200}.

By the growth assumptions (H1) and (H5), and \eqref{203}, \eqref{204}, %
\eqref{205} and \eqref{202}, we can estimate 
\begin{equation}  \label{upbd}
\begin{split}
I(g,G,\Gamma)\leq &\liminf_{n\to+\infty} E(u_n) \\
\leq &\liminf_{n\to+\infty}\left[\int_\Omega W(x,\nabla u_n(x),\nabla^2
u_n(x))\, dx + \int_{S_{u_n}} \Psi_1(x,[u_n(x)],\nu_{u_n}(x))\, d\cH%
^{N-1}(x) \right. \\
&\phantom{\liminf_{n\to\infty}}\,\,\left.+ \int_{S_{\nabla u_n}}
\Psi_2(x,[\nabla u_n(x)],\nu_{\nabla u_n}(x))\, d\cH^{N-1}(x) \right] \\
\leq& \liminf_{n\to+\infty}\left[C\int_\Omega (1+|\nabla u_n(x)|+|\nabla^2 u_n(x)|)\,
dx + K_1\int_{S_{u_n}} |[u_n(x)]|\, d\cH^{N-1}(x) \right. \\
&\phantom{\liminf_{n\to+\infty}} \left.+K_2\int_{S_{\nabla u_n}}|[\nabla
u_n(x)]|\, d\cH^{N-1}(x)\right] \\
\leq& \liminf_{n\to+\infty} \left[\int_\Omega C(1+|G(x)|+|\Gamma(x)|)\, dx +K_1
\|D^su_n\|(\Omega) + K_2\|D^s(\nabla u_n)\|(\Omega)\right] \\
&\phantom{\liminf_{n\to+\infty}} + \limsup_{n\to+\infty}
C\|w_n-G\|_{L^1(\Omega;\R{d{\times}N})} \\
\leq& C\bigg[\cL^N(\Omega) + \|G\|_{L^1(\Omega;\R{d{\times}N})} +
\|\Gamma\|_{L^1(\Omega;\R{d{\times}N{\times}N})} + \|Dg\|(\Omega) \\
&\phantom{\liminf_{n\to+\infty}}+ \limsup_{n\to+\infty} \|DG-Dh\|(\Omega) +
\limsup_{n\to+\infty} \|w_n\|_{L^1(\Omega;\R{d{\times}N})}\bigg] \\
\ \leq& C\bigg[\cL^N(\Omega) + \|G\|_{L^1(\Omega;\R{d{\times}N}%
)}+\|\Gamma\|_{L^1(\Omega;\R{d{\times}N{\times}N})} \\
&\phantom{\liminf_{n\to+\infty}} + \|Dg\|(\Omega)+ \|DG\|(\Omega) +
\limsup_{n\to+\infty} \|w_n - G\|_{L^1(\Omega;\R{d{\times}N})}\bigg] \\
\ \leq & C( 1 +\|Dg\|(\Omega) + \|G\|_{L^1(\Omega;\R{d{\times}N})} +
\|DG\|(\Omega)+\|\Gamma\|_{L^1(\Omega;\R{d{\times}N{\times}N})}).
\end{split}%
\end{equation}
\end{proof}

\begin{remark}
As the above proof shows, given $(g,G,\Gamma) \in SD^2(\Omega;\Rb^d)$ there exists a sequence 
$\{u_n\} \subset SBV^2(\Omega;\Rb^d)$ such that $u_n\to g$ in $L^1(\Omega;\R{d})$, 
$\nabla u_n \to G$ in $L^1(\Omega;\R{d{\times}N})$ and $\nabla^2 u_n \weakst \Gamma$.
Our proof is essentially the same as the proof of Theorem 3.2 in \cite{P04}.
\end{remark}

\medskip

\subsection{Decomposition}

\medskip

\begin{theorem}
\label{decomposition} We may decompose $I(g, G, \Gamma)$ as $I(g,G,\Gamma) = I_1(g, G,
\Gamma) + I_2(G,\Gamma),$ where 
\begin{eqnarray*}
I_1(g, G, \Gamma) &:=& \inf_{\{u_n\}\subset SBV^2(\Omega; {\mathbb{R}}^d)} \left
\{ \liminf_{n\to +\infty} \int_{S_{u_n}}\Psi_1(x, [u_n(x)], \nu_{u_n}(x)) \,
d\mathcal{H}^{N-1}(x) :\right. \\
& & \left. \hspace{3cm} \,\,u_n\mathrel{}\mathop{\longrightarrow}%
\limits^{L^1}_{} g, \,\, \nabla u_n \mathrel{}\mathop{\longrightarrow}%
\limits^{L^1}_{} G, \, \, \nabla^2 u_n \overset{\ast}{\rightharpoonup} \Gamma
\right\}
\end{eqnarray*}
and 
\begin{eqnarray*}
I_2(G, \Gamma) &:=& \inf_{\{v_n\}\subset SBV(\Omega; {\mathbb{R}}^{d\times N})}
\left \{ \liminf_{n\to +\infty} \Big[\int_{\Omega} W(x, v_n(x), \nabla
v_n(x))\, dx \right. \\
&&\left. \hspace{0,5cm} + \int_{S_{v_n}}\Psi_2 (x, [v_n(x)], \nu_{v_n}(x))
\, d\cH^{N-1}(x)\Big] : \,\,v_n \mathrel{}\mathop{\longrightarrow}%
\limits^{L^1}_{} G, \, \, \nabla v_n \overset{\ast}{\rightharpoonup} \Gamma
\right\}.
\end{eqnarray*}
\end{theorem}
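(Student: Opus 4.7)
The plan is to prove the two inequalities $I(g,G,\Gamma) \geq I_1(g,G,\Gamma) + I_2(G,\Gamma)$ and $I(g,G,\Gamma) \leq I_1(g,G,\Gamma) + I_2(G,\Gamma)$ separately. For the lower bound, I would observe that any sequence $\{u_n\}\subset SBV^2(\Omega;\mathbb{R}^d)$ admissible for $I(g,G,\Gamma)$ is automatically admissible for $I_1(g,G,\Gamma)$, and that setting $v_n := \nabla u_n$ one has $v_n \to G$ in $L^1$ and $\nabla v_n = \nabla^2 u_n \weakst \Gamma$, so $\{v_n\}$ is admissible for $I_2(G,\Gamma)$. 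Splitting $E(u_n)$ into the interfacial $\Psi_1$ contribution and the $(W+\Psi_2)$ contribution, and invoking the super-additivity $\liminf_n(a_n+b_n)\geq\liminf_n a_n+\liminf_n b_n$ valid for non-negative sequences, one obtains $\liminf_n E(u_n) \geq I_1(g,G,\Gamma) + I_2(G,\Gamma)$; taking the infimum over admissible $\{u_n\}$ finishes this direction.

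For the upper bound I would fix $\varepsilon > 0$ and select near-optimal sequences: $\{u_n\}$ admissible for $I_1(g,G,\Gamma)$ with $\liminf_n \int_{S_{u_n}}\Psi_1(x,[u_n],\nu_{u_n})\,d\mathcal{H}^{N-1} \leq I_1(g,G,\Gamma) + \varepsilon$, and $\{v_m\}$ admissible for $I_2(G,\Gamma)$ with the analogous bound on $\int_\Omega W(x,v_m,\nabla v_m)\,dx + \int_{S_{v_m}}\Psi_2(x,[v_m],\nu_{v_m})\,d\mathcal{H}^{N-1}$. The goal is to glue these into a single admissible competitor for $I$ by correcting $u_n$ so that its gradient equals $v_m$. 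Applying Alberti's theorem (Theorem \ref{Al}) to $v_m - \nabla u_n$ yields $\phi_{n,m} \in SBV(\Omega;\mathbb{R}^d)$ with $\nabla \phi_{n,m} = v_m - \nabla u_n$ a.e.\ and $\|\phi_{n,m}\|_{L^1} + \|D^s \phi_{n,m}\|(\Omega) \leq C\|v_m - \nabla u_n\|_{L^1}$. Setting $w_{n,m} := u_n + \phi_{n,m}$ gives $\nabla w_{n,m} = v_m$ and $\nabla^2 w_{n,m} = \nabla v_m$. A diagonal selection $m = m(n)$ with $m(n) \to \infty$ and $\|v_{m(n)} - \nabla u_n\|_{L^1} \to 0$ (possible since both $v_m$ and $\nabla u_n$ converge to $G$ in $L^1$) makes $w_n := w_{n,m(n)}$ admissible for $I(g,G,\Gamma)$: indeed $w_n \to g$ and $\nabla w_n \to G$ in $L^1$, and $\nabla^2 w_n \weakst \Gamma$ by metrizability of weak-$*$ convergence on norm-bounded subsets of $\mathcal{M}(\Omega;\mathbb{R}^{d\times N\times N})$.

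For the energy, the bulk and $\Psi_2$ parts of $E(w_n)$ coincide with those computed on $v_{m(n)}$, and so inherit the bound $I_2(G,\Gamma)+\varepsilon$ in the $\liminf$. For the $\Psi_1$ part, using $S_{w_n} \subset S_{u_n} \cup S_{\phi_{n,m(n)}}$ together with hypotheses (H5), (H7) and (H8) for $\Psi_1$, one obtains
\begin{equation*}
\int_{S_{w_n}} \Psi_1(x,[w_n],\nu_{w_n})\,d\mathcal{H}^{N-1} \leq \int_{S_{u_n}} \Psi_1(x,[u_n],\nu_{u_n})\,d\mathcal{H}^{N-1} + K_1 \|D^s \phi_{n,m(n)}\|(\Omega),
\end{equation*}
and the last term vanishes as $n \to \infty$ by the Alberti bound. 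Combining these estimates and letting $\varepsilon \to 0$ gives the upper bound.

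The main obstacle is the diagonal extraction: one must pick $m(n)\to\infty$ ensuring simultaneously that $\nabla v_{m(n)} \weakst \Gamma$ (a property of $\{v_m\}$ as $m\to\infty$), that $\|v_{m(n)} - \nabla u_n\|_{L^1}\to 0$ (which pairs each $n$ with a large $m$), and that the near-optimal energy bounds persist along the diagonal. These constraints are compatible because both $v_m$ and $\nabla u_n$ converge to $G$, but a careful Attouch-type argument is required. A secondary subtlety is the jump-set bookkeeping when $S_{u_n}$ and $S_{\phi_{n,m(n)}}$ overlap with aligned normals, handled by sub-additivity of $\Psi_1$ together with the $SBV$ identity $\int_{S_\phi}|[\phi]|\,d\mathcal{H}^{N-1} = \|D^s\phi\|(\Omega)$.
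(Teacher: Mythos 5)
Your proof is correct and follows essentially the same strategy as the paper's: the lower bound is the trivial direction, and the upper bound glues near-optimal sequences for $I_1$ and $I_2$ via Alberti's Theorem~\ref{Al}. Two remarks are in order. The paper takes the glued competitor to be $w_n := u_n + h_n - \tilde h_n$, where $\tilde h_n$ is a piecewise-constant approximation of $h_n$ obtained from Lemma~\ref{ctap}; you instead set $w_{n,m} := u_n + \phi_{n,m}$ directly, and this works because the estimate $\|\phi_{n,m}\|_{L^1(\Omega)} \le C\|v_m - \nabla u_n\|_{L^1}$ is already part of the statement of Theorem~\ref{Al}, which gives $w_n \to g$ in $L^1$ without any piecewise-constant correction. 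Your route is a valid and slightly leaner variant of the paper's construction. Second, the ``main obstacle'' you flag is not actually an obstacle: since both $v_m \to G$ and $\nabla u_n \to G$ in $L^1$, the trivial choice $m(n) = n$ already yields $\|v_n - \nabla u_n\|_{L^1} \to 0$. The paper uses a single index from the outset (after passing to subsequences so that the defining liminfs for $I_1$ and $I_2$ are attained as genuine limits), so no diagonal argument of Attouch type is required, and the three constraints you list are automatically compatible once the two near-optimal sequences are fixed in this way. Your remaining observations on jump-set bookkeeping via (H5) and (H8) and the identity $\int_{S_{\phi}}|[\phi]|\,d\mathcal{H}^{N-1}=\|D^s\phi\|(\Omega)$ match the paper's reasoning.
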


\begin{proof}
It is clear that 
\begin{equation*}
I(g, G, \Gamma) \geq I_1(g, G, \Gamma) + I_2(G,\Gamma).
\end{equation*}
To show the reverse inequality let $\{u_n\}\subset SBV^2(\Omega; {\mathbb{R}}%
^d)$ be such that $u_n\mathrel{}\mathop{\longrightarrow}\limits^{L^1}_{} g$, 
$\nabla u_n \mathrel{}\mathop{\longrightarrow}\limits^{L^1}_{} G$, $\nabla^2
u_n \overset{\ast}{\rightharpoonup} \Gamma$ and 
\begin{equation*}
I_1(g, G, \Gamma) = \lim_{n \to + \infty}\int_{S_{u_n}}\Psi_1(x, [u_n(x)],
\nu_{u_n}(x)) \, d\mathcal{H}^{N-1}(x)
\end{equation*}
and let $\{v_n\}\subset SBV^2(\Omega; {\mathbb{R}}^{d \times N})$ be such
that $v_n\mathrel{}\mathop{\longrightarrow}\limits^{L^1}_{} G, \, \, \nabla
v_n \overset{\ast}{\rightharpoonup} \Gamma$ and 
\begin{equation*}
I_2(G, \Gamma) = \lim_{n \to + \infty}\left[\int_{\Omega} W(x,v_n(x),\nabla
v_n(x)) \, dx + \int_{S_{v_n}}\Psi_2(x, [v_n(x)], \nu_{v_n}(x)) \, d\mathcal{%
H}^{N-1}(x) \right].
\end{equation*}
By Theorem \ref{Al} let $\{h_n\}\subset SBV(\Omega; {\mathbb{R}}^d)$ be such
that $\nabla h_n = v_n - \nabla u_n$ and $\|D^sh_n\|(\Omega) \leq C \|v_n -
\nabla u_n\|_{L^1(\Omega;{\mathbb{R}}^{d\times N})}$, and by Lemma \ref{ctap}
let $\tilde{h}_n$ be a sequence of piecewise constant functions with $\|%
\tilde{h}_n - h_n\|_{L^1} < \frac{1}{n}$ and $\big| \|D\tilde{h}_n\|(\Omega)
- \|D h_n\|(\Omega) \big| < \frac{1}{n}$. Define $\{w_n\}\subset
SBV^2(\Omega; {\mathbb{R}}^d)$ by 
\begin{equation*}
w_n := u_n + h_n - \tilde{h}_n.
\end{equation*}
Then $w_n\mathrel{}\mathop{\longrightarrow}\limits^{L^1}_{} g, \,\, \nabla
w_n = v_n \mathrel{}\mathop{\longrightarrow}\limits^{L^1}_{} G, \, \,
\nabla^2 w_n = \nabla v_n \overset{\ast}{\rightharpoonup} \Gamma$ and so, by (H8)
and (H5), 
\begin{eqnarray*}
I(g,G,\Gamma) &\leq & \liminf_{n \to + \infty}\left[\int_{\Omega}\hspace{-0,1cm}%
W(x,\nabla w_n(x),\nabla^2 w_n(x)) \, dx + \int_{S_{w_n}}\hspace{-0,4cm}%
\Psi_1(x, [w_n(x)], \nu_{w_n}(x)) \, d\mathcal{H}^{N-1}(x) \right. \\
& & \hspace{3cm} \left. + \int_{S_{\nabla w_n}}\hspace{-0,4cm} \Psi_2(x,
[\nabla w_n(x)], \nu_{\nabla w_n}(x)) \, d\mathcal{H}^{N-1}(x)\right] \\
& & \leq \lim_{n \to + \infty}\left[\int_{\Omega}\hspace{-0,1cm}W(x,\nabla
v_n(x),\nabla v_n(x)) \, dx + \int_{S_{v_n}}\hspace{-0,4cm}\Psi_2(x,
[v_n(x)], \nu_{v_n}(x)) \, d\mathcal{H}^{N-1}(x) \right] \\
& & + \lim_{n \to + \infty}\int_{S_{u_n}}\Psi_1(x, [u_n(x)], \nu_{u_n}(x))
\, d\mathcal{H}^{N-1}(x) \\
& & + \limsup_{n \to + \infty}\int_{S_{h_n}\cup S_{\tilde{h}_n}} \Psi_1(x,
[h_n- \tilde{h}_n](x), \nu_{h_n- \tilde{h}_n}(x)) \, d\mathcal{H}^{N-1}(x) \\
& & \leq I_2(G,\Gamma) + I_1(g,G,\Gamma) + \limsup_{n \to + \infty}C\int_{S_{h_n}\cup
S_{\tilde{h}_n}} \hspace{-0,3cm}|[h_n- \tilde{h}_n](x)| \, d\mathcal{H}%
^{N-1}(x) \\
& & \leq I_2(G,\Gamma) + I_1(g,G,\Gamma) + \limsup_{n \to +
\infty}C\int_{\Omega}|v_n(x) - \nabla u_n(x)| \, dx \\
& & = I_2(G,\Gamma) + I_1(g,G,\Gamma),
\end{eqnarray*}
where we have used the properties of the functions $u_n$, $v_n$, $h_n$ and $%
\tilde{h}_n$.
\end{proof}

\medskip

\subsection{Localization}

\medskip

In this section we localize the functionals $I_1$ and $I_2$ and show that
they are Radon measures. For each $U \in \mathcal{O}(\Omega)$ we define the
localized functionals 
\begin{eqnarray}  \label{loc1}
I_1(g,G,\Gamma,U) &:=& \inf_{\{u_n\}\subset SBV^2(U; {\mathbb{R}}^d)} \left \{
\liminf_{n\to +\infty} \int_{S_{u_n}\cap U}\Psi_1(x,[u_n(x)],\nu_{u_n}(x))
\, d\mathcal{H}^{N-1}(x) : \right.  \notag \\
& & \left. \hspace{2cm} u_n\mathrel{}\mathop{\longrightarrow}%
\limits^{L^1}_{} g, \,\, \nabla u_n \mathrel{}\mathop{\longrightarrow}%
\limits^{L^1}_{} G, \,\, \nabla^2 u_n \overset{\ast}{\rightharpoonup} \Gamma
\right\}
\end{eqnarray}
and 
\begin{eqnarray}  \label{loc2}
& & \hspace{-1,5cm} I_2(G,\Gamma,U) := \inf_{\{v_n\}\subset SBV(U; {\mathbb{R}}%
^{d\times N})} \left \{ \liminf_{n\to +\infty} \Big[\int_{U} W(x, v_n(x),
\nabla v_n(x))\, dx \right.  \notag \\
&&\left. \hspace{0,2cm} + \int_{S_{v_n}\cap U}\Psi_2 (x, [v_n(x)],
\nu_{v_n}(x)) \, d\cH^{N-1}(x)\Big] : \,\,v_n \mathrel{}\mathop{%
\longrightarrow}\limits^{L^1}_{} G, \, \, \nabla v_n \overset{\ast}{%
\rightharpoonup} \Gamma \right\}.
\end{eqnarray}
\noindent It is clear that localized versions of the upper bound \eqref{upbd}
still hold, namely 
\begin{equation}  \label{ubI_1}
I_1(g,G,\Gamma,U) \leq C \left [ \|G\|_{L^1(U; {\mathbb{R}}^{d\times N})} +
\|Dg\|(U) \right],
\end{equation}
\begin{equation}  \label{ubI_2}
I_2(G,\Gamma,U) \leq C \left [1 + \|G\|_{L^1(U; {\mathbb{R}}^{d\times N})} +
\|\Gamma\|_{L^1(U; {\mathbb{R}}^{d\times N\times N})} +\|DG\|(U) \right].
\end{equation}
We will now prove that $I_1(g,G,\Gamma,\cdot)\lfloor \mathcal{O}(\Omega)$ and $%
I_2(G,\Gamma,\cdot)\lfloor \mathcal{O}(\Omega)$ are Radon measures. For this
purpose we first show that these functionals are nested subadditive.

\begin{lemma}
\label{nsa} Let $U, V, W \in \mathcal{O}(\Omega)$ be such that $U \subset
\subset V \subset W.$ Then 
\begin{equation}  \label{nsa1}
I_1(g,G,\Gamma,W) \leq I_1(g,G,\Gamma,V) + I_1(g,G,\Gamma, W \setminus \overline{U}),
\end{equation}
\begin{equation}  \label{nsa2}
I_2(G,\Gamma,W) \leq I_2(G,\Gamma,V) + I_2(G,\Gamma, W \setminus \overline{U}).
\end{equation}
\end{lemma}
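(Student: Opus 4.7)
The plan is to prove both inequalities via a De Giorgi slicing argument: glue near-optimal competitors for $I_i$ on $V$ and on $W\setminus \overline U$ across a hypersurface inside the overlap annulus, chosen so that the gluing error vanishes. I detail \eqref{nsa1}; \eqref{nsa2} is analogous.

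Fix $\eta>0$ and pick admissible sequences $\{u_n\} \subset SBV^2(V;\R{d})$ and $\{u_n'\}\subset SBV^2(W\setminus \overline U;\R{d})$ realising $I_1(g,G,\Gamma,V)$ and $I_1(g,G,\Gamma, W\setminus \overline U)$ up to error $\eta$, with both liminfs being limits after extraction. Since $U\subset\subset V$, there is $\delta_0>0$ with the annulus $A:= \{\delta_0 < \operatorname{dist}(\cdot, \overline U) < 2\delta_0\} \cap W$ compactly contained in $V\cap (W\setminus \overline U)$, where both sequences are defined. The coarea formula gives
\begin{equation*}
\int_{\delta_0}^{2\delta_0} \int_{\{\operatorname{dist}(\cdot,\overline U) = \delta\}\cap W} |u_n - u_n'|\, d\cH^{N-1}\, d\delta \le \int_A |u_n - u_n'|\, dx \to 0,
\end{equation*}
so (via Chebyshev together with a diagonal extraction avoiding the at most countably many $\delta$ charged by the weak-$*$ limit measures of $|\nabla^2 u_n|\cL^N$ and $|\nabla^2 u_n'|\cL^N$) we may select $\delta_n\in(\delta_0, 2\delta_0)$ such that $\Sigma_n := \{\operatorname{dist}(\cdot, \overline U) = \delta_n\}\cap W$ is $(N{-}1)$-rectifiable and Lipschitz, admits traces of $u_n$ and $u_n'$, and satisfies $\int_{\Sigma_n} |u_n - u_n'|\, d\cH^{N-1} \to 0$.

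Set $V_{\delta_n}:= \{\operatorname{dist}(\cdot,\overline U) < \delta_n\}$ and define the glued competitor
\begin{equation*}
w_n := u_n\, \chi_{V_{\delta_n}} + u_n'\, \chi_{W \setminus V_{\delta_n}}.
\end{equation*}
Then $w_n \in SBV^2(W;\R{d})$ with $S_{w_n}\cap W \subset (S_{u_n} \cap V_{\delta_n}) \cup (S_{u_n'}\cap (W \setminus \overline{V_{\delta_n}})) \cup \Sigma_n$; the artificial jump of $\nabla w_n$ across $\Sigma_n$ enters only $D^s(\nabla w_n)$, leaving $\nabla^2 w_n$ unaffected, so the convergences $w_n \to g$ and $\nabla w_n \to G$ in $L^1(W)$ together with $\nabla^2 w_n \weakst \Gamma$ in $W$ all hold, i.e.\ $w_n$ is admissible for $I_1(g,G,\Gamma,W)$. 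Applying (H8) and (H5),
\begin{equation*}
\int_{S_{w_n}\cap W}\hspace{-0.3cm} \Psi_1(x, [w_n], \nu_{w_n})\, d\cH^{N-1} \le \int_{S_{u_n}\cap V}\hspace{-0.3cm} \Psi_1\, d\cH^{N-1} + \int_{S_{u_n'}\cap (W\setminus \overline U)}\hspace{-0.3cm} \Psi_1\, d\cH^{N-1} + K_1 \int_{\Sigma_n} |u_n - u_n'|\, d\cH^{N-1}.
\end{equation*}
Taking $\liminf_{n\to+\infty}$ and then sending $\eta \to 0^+$ gives \eqref{nsa1}.

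The argument for \eqref{nsa2} is structurally identical, applied to near-optimal $SBV$ sequences $\{v_n\},\{v_n'\}$ for $I_2(G,\Gamma,V), I_2(G,\Gamma, W\setminus\overline U)$: the bulk integral $\int W(x,v_n,\nabla v_n)\, dx$ splits additively across the cut $\Sigma_n$, while the new $\Psi_2$-interfacial contribution on $\Sigma_n$ is bounded by $K_2 \int_{\Sigma_n} |v_n - v_n'|\, d\cH^{N-1}$ via (H5) and (H8), and vanishes by the same slice selection. The main technical obstacle lies in verifying admissibility of the glued competitors, and specifically $\nabla^2 w_n \weakst \Gamma$ (respectively $\nabla v_n \weakst \Gamma$) on the whole of $W$, even though the cut $\Sigma_n$ depends on $n$; the key point is that the artificial gradient jump along $\Sigma_n$ contributes only to the singular part of the relevant $BV$-derivative, and the generic choice of $\delta_n$ (steering clear of the concentration set of the limit variational measures) ensures no mass accumulates on $\Sigma_n$, so the weak-$*$ convergence passes to the glued sequence.
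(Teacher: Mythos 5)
Your core idea --- pick near-optimal competitors on $V$ and $W\setminus\overline U$, use the coarea formula for the distance function to find a good level set on which to glue, estimate the new interfacial contribution by (H5)/(H8) and send it to zero --- is exactly the paper's strategy. The one genuine difference is that you let the cutting level $\delta_n$ vary with $n$ (a Chebyshev slice per index), whereas the paper fixes a single level $\rho_0$ for all $n$. This difference is not cosmetic: with a moving cut, the admissibility claim $\nabla^2 w_n\weakst\Gamma$ in $\mathcal M(W)$ does not follow from the admissibility of the two original sequences by the argument you sketch. You must pass to the limit in $\int_{V_{\delta_n}}\phi\,\nabla^2 u_n\,dx$ with both $\nabla^2 u_n$ converging only weakly-$*$ as measures and the domain of integration $V_{\delta_n}$ changing with $n$; avoiding the (countably many) levels charged by the weak-$*$ limit $\lambda$ of $|\nabla^2 u_n|\mathcal L^N$ controls each \emph{fixed} slice, but says nothing about a sequence of slices unless you additionally show $\delta_n\to\delta_*$ with $\lambda(\{\mathrm{dist}=\delta_*\})=0$ and quantify the resulting boundary-layer error. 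Your closing sentence (``so the weak-$*$ convergence passes to the glued sequence'') asserts the conclusion without supplying this step, and that is the gap.

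The fix is what the paper does, and it is available to you with no extra work: from $\int_0^\delta f_n(\rho)\,d\rho\to 0$ with $f_n(\rho):=\int_{\partial U_\rho}|u_n-v_n|\,d\mathcal H^{N-1}$, after passing to a subsequence $f_n(\rho)\to 0$ for a.e.\ $\rho$; since the set of $\rho$ charged by $\lambda$ (and by the analogous limit from the outer sequence) is at most countable, you may choose a single $\rho_0$ satisfying all requirements simultaneously, and use the \emph{same} fixed cut $\partial U_{\rho_0}$ for every $n$. Then $\chi_{V_{\rho_0}}$ does not depend on $n$, and the weak-$*$ convergence $\nabla^2 w_n\weakst\Gamma$ on $W$ follows from the convergences on $V$ and $W\setminus\overline U$ together with $\lambda(\partial U_{\rho_0})=0$. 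With that replacement, the rest of your argument --- the estimate via (H5) and (H8), the vanishing of the cross term, and the parallel treatment of $I_2$ with the additive splitting of the bulk integral --- is sound and matches the paper.
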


\begin{proof}
We provide the details of the proof only for $I_1$ since for $I_2$ it is
analogous.

Let $u_n \in SBV^2(V;{\mathbb{R}}^d)$ and $v_n \in SBV^2(W\setminus 
\overline{U};{\mathbb{R}}^d)$ be two sequences such that $u_n \rightarrow g$
in $L^1(V;{\mathbb{R}}^d)$, $\nabla u_n \rightarrow G$ in $L^1(V;{\mathbb{R}}%
^{d\times N})$, $\nabla^2 u_n \overset{\ast}{\rightharpoonup} \Gamma$ in $%
\mathcal{M}(V;{\mathbb{R}}^{d\times N \times N})$, $v_n \rightarrow g$ in $%
L^1(W\setminus \overline{U};{\mathbb{R}}^d)$, $\nabla v_n \rightarrow G$ in $%
L^1(W\setminus \overline{U};{\mathbb{R}}^{d\times N})$, $\nabla^2 v_n 
\overset{\ast}{\rightharpoonup} \Gamma$ in $\mathcal{M}(W\setminus \overline{U};{%
\mathbb{R}}^{d\times N \times N})$, and that, in addition, 
\begin{equation*}
I_1(g,G,\Gamma,V) = \lim_{n\to +\infty} \int_{S_{u_n}\cap V}
\Psi_1(x,[u_n(x)],\nu_{u_n}(x)) \, d\mathcal{H}^{N-1}(x)
\end{equation*}
\noindent and 
\begin{equation*}
I_1(g,G,\Gamma,W\setminus \overline{U}) = \lim_{n\to +\infty} \int_{S_{v_n}\cap
(W\setminus \overline{U})} \Psi_1(x,[v_n(x)],\nu_{v_n}(x))\, d\mathcal{H}%
^{N-1}(x).
\end{equation*}
\noindent Note that 
\begin{equation}  \label{L1conv}
u_n - v_n \rightarrow 0 \; \text{ in }\; L^1(V \cap (W\setminus \overline{U}%
);{\mathbb{R}}^d)
\end{equation}
and 
\begin{equation*}
\nabla u_n - \nabla v_n \rightarrow 0 \; \text{ in }\; L^1(V \cap
(W\setminus \overline{U});{\mathbb{R}}^{d\times N}),
\end{equation*}
\begin{equation*}
\nabla^2 u_n - \nabla^2 v_n \overset{\ast}{\rightharpoonup} 0 \; \text{ in }%
\; \mathcal{M}(V \cap (W\setminus \overline{U});{\mathbb{R}}^{d\times N
\times N}).
\end{equation*}
\noindent For $\delta > 0$ define 
\begin{equation*}
U_{\delta} := \{ x \in V: \,\, \mbox{dist}(x, U) < \delta\}.
\end{equation*}
\noindent For $x \in W$ let $d(x):= \mbox{dist}(x, U)$. Since the distance
function to a fixed set is Lipschitz continuous (see \cite[Exercise 1.1]{Z}), we can apply the change of variables formula \cite[Section 3.4.3, Theorem 2]{EG}, to obtain 
\begin{equation*}
\int_{U_{\delta}\setminus \overline{U}} |u_n(x) - v_n(x)| \, |\det\nabla d(x)| \, dx =
\int_{0}^{\delta}\left [ \int_{d^{-1}(y)} |u_n(x) - v_n(x)| \, d\mathcal{H}%
^{N-1}(x)\right]\, dy
\end{equation*}
\noindent and, as $|\det\nabla d|$ is bounded and \eqref{L1conv} holds, it
follows that for almost every $\rho \in [0, \delta]$ we have 
\begin{equation}  \label{aer}
\lim_{n\rightarrow +\infty} \int_{d^{-1}(\rho)} |u_n(x) - v_n(x) |\, d%
\mathcal{H}^{N-1}(x) = \lim_{n\rightarrow +\infty} \int_{\partial U_{\rho}}
|u_n(x) - v_n(x) |\, d\mathcal{H}^{N-1}(x) = 0.
\end{equation}
Fix $\rho_0 \in [0, \delta]$ such that $\|\Gamma \chi_V\|(\partial U_{\rho_0}) =
0 $, $\|\Gamma \chi_{W \setminus \overline{U}}\|(\partial U_{\rho_0}) = 0$ and
such that \eqref{aer} holds. We observe that $U_{\rho_0}$ is a set with
locally Lipschitz boundary since it is a level set of a Lipschitz function
(see, e.g., \cite{EG}). Hence we can consider $u_n, v_n,
\nabla u_n, \nabla v_n$ on $\partial U_{ \rho_0}$ in the sense of traces and
define 
\begin{equation*}
w_n = 
\begin{cases}
u_n & \text{ in}\; \overline{U}_{\rho_0} \\ 
v_n & \text{ in}\; W\setminus \overline{U}_{\rho_0}.%
\end{cases}
\end{equation*}
\noindent Then, by the choice of $\rho_0$, $w_n$ is admissible for $%
I_1(g,G,\Gamma,W)$ so, by (H5), \eqref{L1conv} and \eqref{aer}, we obtain 
\begin{eqnarray*}
I_1(g,G,\Gamma,W) &\leq & \liminf_{n\to +\infty} \int_{S_{w_{n}}\cap W} \Psi_1
(x,[w_n(x)],\nu_{w_n}(x))\, d\cH^{N-1}(x) \\
&\leq & \liminf_{n\to +\infty} \left[\int_{S_{u_{n}}\cap V} \Psi_1
(x,[u_n(x)],\nu_{u_n}(x))\, d\cH^{N-1}(x) \right. \\
& & \hspace{1cm} + \int_{S_{v_{n}}\cap (W \setminus \overline{U})} \Psi_1
(x,[v_n(x)],\nu_{v_n}(x))\, d\cH^{N-1}(x) \\
& & \left. \hspace{1cm} + \int_{S_{w_n} \cap \partial U_{\rho_0}} C |u_n(x)
- v_n(x)| \, d\mathcal{H}^{N-1}(x) \right] \\
& = & I_1(g,G,\Gamma,V) + I_1(g,G,\Gamma,W \setminus \overline{U}),
\end{eqnarray*}
\noindent which concludes the proof.
\end{proof}

\begin{theorem}
\label{radon} Assume that hypotheses (H1) and (H5) hold. Then $%
I_1(g,G,\Gamma,\cdot) \lfloor \mathcal{O}(\Omega)$ and $I_2(G,\Gamma,\cdot) \lfloor 
\mathcal{O}(\Omega)$ are Radon measures, absolutely continuous with respect
to $\mathcal{L}^N + \mathcal{H}^{N-1} \lfloor S_g$ and to $\mathcal{L}^N + 
\mathcal{H}^{N-1} \lfloor S_G$, respectively.
\end{theorem}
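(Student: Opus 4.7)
My plan is to apply the De Giorgi-Letta criterion to extend $I_1(g,G,\Gamma,\cdot)$ and $I_2(G,\Gamma,\cdot)$ from open subsets to Radon measures on $\Omega$, and then to read off absolute continuity directly from the upper bounds \eqref{ubI_1} and \eqref{ubI_2}. The arguments for the two functionals are entirely parallel, so I describe the plan for $I_1$; the presence of the additional bulk term in $I_2$ does not alter the structure of the argument.

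The four conditions to verify are: (i) $I_1(g,G,\Gamma,\emptyset) = 0$ and monotonicity, both of which follow by restricting admissible sequences to smaller open sets and invoking the nonnegativity of $\Psi_1$; (ii) superadditivity on disjoint open sets $U_1, U_2$, obtained by taking an almost-optimal sequence $\{u_n\}$ for $I_1(g,G,\Gamma,U_1 \cup U_2)$, restricting it to $U_1$ and $U_2$ separately, and using the inequality $\liminf(a_n + b_n) \geq \liminf a_n + \liminf b_n$ together with the disjointness of the two sets; (iii) inner regularity, proved by choosing an exhaustion $A_k \subset\subset A_k' \subset\subset A$ with $A_k \nearrow A$ and invoking Lemma \ref{nsa} to get
\begin{equation*}
I_1(g,G,\Gamma,A) \leq I_1(g,G,\Gamma,A_k') + I_1(g,G,\Gamma, A \setminus \overline{A_k}),
\end{equation*}
then bounding the remainder via \eqref{ubI_1} by the mass of the Radon measure $C(\|G\|_{L^1(\cdot)} + \|Dg\|(\cdot))$ on the shrinking set $A \setminus \overline{A_k}$, which tends to zero; and (iv) subadditivity on open sets, obtained by combining Lemma \ref{nsa} with inner regularity through a standard diagonal exhaustion. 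Once these four conditions are established, the De Giorgi-Letta criterion yields that $I_1(g,G,\Gamma,\cdot)\lfloor \mathcal{O}(\Omega)$ is the restriction of a Radon measure on $\Omega$.

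The absolute continuity statements are then immediate consequences of the upper bounds: since $g \in SBV^2(\Omega; {\mathbb{R}}^d)$, one has $D^c g = 0$, whence $Dg = \nabla g\,\mathcal{L}^N + [g] \otimes \nu_g\,\mathcal{H}^{N-1}\lfloor S_g$, and $\|Dg\|$ is absolutely continuous with respect to $\mathcal{L}^N + \mathcal{H}^{N-1}\lfloor S_g$; combined with \eqref{ubI_1}, this yields the absolute continuity of $I_1(g,G,\Gamma,\cdot)$ with respect to the same reference measure. The analogous computation using $G \in SBV(\Omega; {\mathbb{R}}^{d\times N})$ and \eqref{ubI_2} gives the statement for $I_2$.

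The main obstacle I expect is the bootstrap from nested subadditivity (Lemma \ref{nsa}) to the full subadditivity required by De Giorgi-Letta on arbitrary pairs of open sets: one must carefully control the remainder term $I_1(g,G,\Gamma,(A\cup B) \setminus \overline{A_k})$ in terms of $I_1(g,G,\Gamma,B)$ alone as $A_k \nearrow A$. This step relies crucially on the Radon measure structure of the upper bound, and hence on the coercivity and growth hypotheses (H1) and (H5) — which is precisely why these two hypotheses are singled out in the statement of Theorem \ref{radon}.
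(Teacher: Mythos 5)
Your route is genuinely different from the paper's. The paper does not invoke the De~Giorgi--Letta criterion: it picks an (almost) optimal sequence $\{u_n\}$ for $I_1(g,G,\Gamma,\Omega)$, forms the measures $\mu_n := \Psi_1(\cdot,[u_n],\nu_{u_n})\,\mathcal{H}^{N-1}\lfloor S_{u_n}$, extracts a weak-$*$ limit $\mu$ in $\mathcal{M}(\overline\Omega)$, and then shows $\mu(V) = I_1(g,G,\Gamma,V)$ for every $V\in\mathcal{O}(\Omega)$ by using Lemma~\ref{nsa} twice in combination with the upper bound \eqref{ubI_1} (in the guise of the auxiliary measure $\lambda$ of \eqref{meas}). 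Since $\mu$ is a Radon measure by construction, the paper never needs subadditivity on arbitrary pairs of open sets, which is precisely the step you single out as the obstacle.

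Your step (iv) is therefore where the real work lies, and ``standard diagonal exhaustion'' understates the difficulty: $(A\cup B)\setminus\overline{A_k}$ does not shrink to $B$; it equals $(A\setminus\overline{A_k})\cup(B\setminus\overline{A_k})\subset B\cup(A\setminus\overline{A_k})$, i.e.\ $B$ together with a thin collar near $\partial A$, and Lemma~\ref{nsa} alone will not compare this to $I_1(g,G,\Gamma,B)$. To close the gap one needs a \emph{second} application of Lemma~\ref{nsa}, now inside $W':= B\cup(A\setminus\overline{A_k})$ with $V'=B$ and some $U'\subset\subset B$, yielding $I_1(g,G,\Gamma,W')\leq I_1(g,G,\Gamma,B)+I_1(g,G,\Gamma,W'\setminus\overline{U'})$; the remainder is then controlled by \eqref{ubI_1} via the inclusion $W'\setminus\overline{U'}\subset(B\setminus\overline{U'})\cup(A\setminus\overline{A_k})$, whose mass for the dominating Radon measure $C\big(\|G\|_{L^1(\cdot)}+\|Dg\|(\cdot)\big)$ can be made arbitrarily small by taking $U'$ close to $B$ and then letting $k\to+\infty$. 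This makes explicit your observation that (H1) and (H5) enter through the Radon-measure structure of the upper bound. With that step supplied, your De~Giorgi--Letta argument is valid, and your derivation of absolute continuity from $D^cg=0$, $D^cG=0$ together with \eqref{ubI_1}, \eqref{ubI_2} is correct and matches what the paper's proof uses implicitly; the paper's weak-$*$ limit argument remains the shorter path because it bypasses full subadditivity entirely.
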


\begin{proof}
Let $u_n \in SBV^2(\Omega; {\mathbb{R}}^d)$ be such that $u_n\to g$ in $%
L^1(\Omega;{\mathbb{R}}^d)$, $\nabla u_n \to G$ in $L^1(\Omega;{\mathbb{R}}%
^{d\times N})$, $\nabla^2 u_n \overset{\ast}{\rightharpoonup} \Gamma$ in $%
\mathcal{M}(\Omega;{\mathbb{R}}^{d \times N \times N})$ and 
\begin{equation*}
I_1(g,G,\Gamma,\Omega) = \lim_{n\to +\infty} \int_{S_{u_n}\cap \Omega}
\Psi_1(x,[u_n(x)],\nu_{u_n}(x))\, d\mathcal{H}^{N-1}(x).
\end{equation*}
\noindent For every Borel set $B \subset \overline \Omega$ define the
sequence of measures 
\begin{eqnarray*}
\mu_n(B) &:=& \int_{S_{u_n}\cap B} \Psi_1(x,[u_n(x)],\nu_{u_n}(x))\, d%
\mathcal{H}^{N-1}(x).
\end{eqnarray*}
\noindent By (H5) this sequence of non-negative Radon measures is uniformly
bounded in $\mathcal{M}(\overline \Omega)$ and thus, upon passing if
necessary to a subsequence, we conclude that 
\begin{equation*}
\mu_n \overset{\ast}{\rightharpoonup} \mu \; \text{ in}\; \mathcal{M}%
(\overline \Omega).
\end{equation*}
In particular, 
\begin{equation*}
\mu(\overline \Omega) = I_1(g,G,\Gamma,\Omega).
\end{equation*}
We want to show that, for all $V\in \mathcal{O }(\Omega)$, 
\begin{equation}  \label{medida}
\mu(V) = I_1(g,G,\Gamma,V).
\end{equation}
\noindent Let $V\in \mathcal{O}(\Omega)$, let $\varepsilon > 0$ and choose $%
W \subset \subset V$ such that $\mu(V \setminus W) < \varepsilon.$ Since $W
\subset \subset V \subset \overline \Omega$, by the nested subadditivity
property it follows that 
\begin{eqnarray*}
\mu(\overline \Omega) &=& I_1(g,G,\Gamma,\Omega) \\
& \leq & I_1(g,G,\Gamma,V) + I_1(g,G,\Gamma,\Omega \setminus \overline W) \\
& \leq & I_1(g,G,\Gamma,V) + \mu(\overline{\Omega \setminus \overline W}),
\end{eqnarray*}
and so, 
\begin{eqnarray*}
\mu(V) &\leq & \mu(W) + \varepsilon \\
&= &\mu(\overline \Omega) - \mu(\overline \Omega\setminus W) + \varepsilon \\
& \leq & I_1(g,G,\Gamma,\Omega) - I_1(g,G,\Gamma,\Omega \setminus \overline{W}) +
\varepsilon \\
& \leq & I_1(g,G,\Gamma,V) + \varepsilon.
\end{eqnarray*}
Thus, letting $\varepsilon \rightarrow 0^+$, we conclude that 
\begin{equation}  \label{3}
\mu(V) \leq I_1(g,G,\Gamma,V).
\end{equation}
\noindent To prove the reverse inequality define, for $U \in \mathcal{O}%
(\Omega)$, 
\begin{equation}  \label{meas}
\lambda(U):= \displaystyle \int_{U}(|\nabla g(x)| + |G(x)|)\, dx + \|D^s
g\|(U).
\end{equation}
\noindent Let $K \subset \subset V$ be a compact set such that $%
\lambda(V\setminus K) < \varepsilon$ and choose an open set $W$ such that $K
\subset \subset W \subset \subset V.$ Lemma \ref{nsa}, \eqref{meas} and %
\eqref{ubI_1} yield 
\begin{eqnarray*}
I_1(g,G,\Gamma,V) & \leq & I_1(g,G,\Gamma,W) + I_1(g,G,\Gamma,V \setminus K) \\
& \leq & \liminf_{n \to + \infty} \mu_n(W) + C\lambda(V\setminus K) \\
& \leq & \limsup_{n \to + \infty} \mu_n(\overline W) + C\varepsilon \\
& \leq & \mu(\overline W) + C \varepsilon \\
&\leq & \mu(V) + C \varepsilon,
\end{eqnarray*}
\noindent so to conclude the result it suffices to let $\varepsilon
\rightarrow 0^+$.

In the case of $I_2$ the proof is analogous, using hypotheses (H1) and (H5), %
\eqref{ubI_2} and the nested subadditivity property \eqref{nsa2}.
\end{proof}


We now define 
\begin{eqnarray*}
\tilde{I}_2(G,\Gamma) &:= & \inf_{v_n \subset SBV(\Omega; {\mathbb{R}}^{d\times
N})} \bigg \{ \liminf_{n\to+\infty} \bigg[\int_\Omega W(x,G(x),\nabla
v_n(x)) \, dx  \\
&& + \int_{S_{v_n}\cap \Omega} \Psi_2(x,[v_n(x)],\nu_{v_n}(x))
\, d\cH^{N-1}(x)\bigg]: \; v_n \mathrel{}\mathop{\longrightarrow}%
\limits^{L^1}_{} G, \; \nabla v_n \overset{\ast}{\rightharpoonup} \Gamma\bigg\}.
\end{eqnarray*}

\begin{proposition}
\label{fixG} Let $(G, \Gamma) \in BV(\Omega; {\mathbb{R}}^{d\times N})\times
L^1(\Omega; {\mathbb{R}}^{d\times N\times N})$. Then we have that 
\begin{equation*}
I_2(G, \Gamma) = \tilde{I}_2(G,\Gamma).
\end{equation*}
\end{proposition}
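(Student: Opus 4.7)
The plan is to observe that the classes of admissible competitors are \emph{identical} in the two infimization problems defining $I_2(G,\Gamma)$ and $\tilde I_2(G,\Gamma)$: both ask for sequences $\{v_n\} \subset SBV(\Omega;\R{d\times N})$ with $v_n \to G$ in $L^1$ and $\nabla v_n \weakst \Gamma$. Moreover the interfacial term $\int_{S_{v_n}\cap\Omega}\Psi_2(x,[v_n],\nu_{v_n})\,d\cH^{N-1}$ appears literally in both functionals. Thus the only difference lies in the bulk integrand, which in $I_2$ is $W(x,v_n(x),\nabla v_n(x))$ and in $\tilde I_2$ is $W(x,G(x),\nabla v_n(x))$.

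The key step is a direct application of the Lipschitz hypothesis (H2) in the second variable. For any admissible $\{v_n\}$,
\begin{equation*}
\left| \int_\Omega W(x,v_n(x),\nabla v_n(x))\,dx - \int_\Omega W(x,G(x),\nabla v_n(x))\,dx \right| \leq C \int_\Omega |v_n(x) - G(x)|\,dx = C\,\|v_n - G\|_{L^1(\Omega;\R{d\times N})},
\end{equation*}
and the right-hand side converges to $0$ by the $L^1$-convergence of $v_n$ to $G$. Consequently, for any admissible sequence, the two bulk integrals differ by a quantity tending to zero, and in particular their $\liminf$'s (together with the common interfacial term) coincide.

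Taking the infimum over admissible sequences on both sides then yields $I_2(G,\Gamma) = \tilde I_2(G,\Gamma)$. There is no real obstacle here: once the identity of admissible classes is noted, the argument reduces to the one-line estimate above, which uses only (H2) and the fact that the $L^1$-convergence of $v_n$ to $G$ is built into the definitions of both functionals.
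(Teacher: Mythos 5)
Your proof is correct and uses essentially the same approach as the paper: apply the Lipschitz bound (H2) in the second argument of $W$ and let the $L^1$-convergence $v_n\to G$ kill the difference of the bulk integrals. Your packaging (same admissible class, bulk integrands differ by $o(1)$ along any admissible sequence, hence the two $\liminf$'s coincide) is a slightly cleaner way of stating the paper's two-inequality argument, but the mathematical content is identical.
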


\begin{proof}
Let $\{v_n\} \subset SBV(\Omega; {\mathbb{R}}^{d\times N})$ be such that $%
v_n \to G$ in $L^1(\Omega; {\mathbb{R}}^{d\times N})$, $\nabla v_n \overset{%
\ast}{\rightharpoonup} \Gamma$ and 
\begin{equation*}
I_2(G, \Gamma) = \lim_{n \to +\infty} \bigg[\int_\Omega W(x, v_n(x), \nabla
v_n(x)) \, dx + \int_{S_{v_n}\cap \Omega} \Psi_2(x,[v_n(x)], \nu_{v_n}(x))\,
d\cH^{N-1}(x)\bigg].
\end{equation*}
By (H2) it follows that 
\begin{eqnarray*}
\tilde{I}_2(G, \Gamma) &\leq & \lim_{n \to + \infty} \bigg[\int_\Omega
W(x,G(x),\nabla v_n(x)) \, dx + \int_{S_{v_n}\cap \Omega}
\Psi_2(x,[v_n(x)],\nu_{v_n}(x)) \, d\cH^{N-1}(x)\bigg] \\
& \leq& \limsup_{n \to +\infty} \bigg[\int_\Omega W(x,G(x),\nabla v_n(x)) -
W(x,v_n(x),\nabla v_n(x)) \, dx \bigg] \\
&+& \lim_{n \to + \infty} \bigg[\int_{\Omega}W(x,v_n(x),\nabla v_n(x)) \, dx
+ \int_{S_{v_n}\cap \Omega} \Psi_2(x,[v_n(x)],\nu_{v_n}(x))\, d\cH^{N-1}(x)%
\bigg] \\
&\leq& \limsup_{n \to +\infty} C\int_\Omega |G(x) - v_n(x)| \, dx + I_2(G,\Gamma)
= I_2(G,\Gamma).
\end{eqnarray*}
The reverse inequality is proved similarly.
\end{proof}

A standard diagonalization argument yields the following lower
semicontinuity property of both $I_1$ and $I_2$.

\begin{proposition}
\label{lsc} Let $(g,G,\Gamma) \in SD^2(\O ;{\mathbb{R}}^d)$ and $g_n \in SBV^2(\O %
;{\mathbb{R}}^d)$, $G_n \in SBV(\O ;{\mathbb{R}}^{d\times N})$ be such that $%
g_n \to g$ in $L^1(\O ;{\mathbb{R}}^d)$ and $G_n \to G$ in $L^1(\O ;{\mathbb{%
R}}^{d\times N})$. Then 
\begin{equation*}
I_1(g,G,\Gamma,\O ) \leq \liminf_{n \to +\infty}I_1(g_n,G,\Gamma,\O )
\end{equation*}
and 
\begin{equation*}
I_2(G,\Gamma,\O ) \leq \liminf_{n \to +\infty}I_2(G_n,\Gamma,\O ).
\end{equation*}
\end{proposition}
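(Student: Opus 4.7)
Both assertions follow by the same diagonal-extraction argument, which I sketch for $I_{1}$; the case of $I_{2}$ is parallel.

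Without loss of generality assume $L := \liminf_{n} I_{1}(g_{n},G,\Gamma,\Omega) < +\infty$, and pass to a (non-relabeled) subsequence along which this $\liminf$ is a genuine limit equal to $L$. For each $n$ the definition of $I_{1}(g_{n},G,\Gamma,\Omega)$ yields a sequence $\{u_{n,k}\}_{k}\subset SBV^{2}(\Omega;\mathbb{R}^{d})$ with $u_{n,k}\to g_{n}$ in $L^{1}$, $\nabla u_{n,k}\to G$ in $L^{1}$, $\nabla^{2}u_{n,k}\weakst\Gamma$ as $k\to\infty$, and whose interfacial energy converges to a value not exceeding $I_{1}(g_{n},G,\Gamma,\Omega)+1/n$. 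The plan is to select indices $k(n)$ so that $\{u_{n,k(n)}\}_{n}$ is admissible for $I_{1}(g,G,\Gamma,\Omega)$ while its interfacial energy still delivers the required upper bound.

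Fix a countable dense family $\{\varphi_{j}\}$ in $C_{0}(\Omega;\mathbb{R}^{d\times N\times N})$. For each $n$ one chooses $k(n)$ so large that simultaneously: (a) $\|u_{n,k(n)}-g_{n}\|_{L^{1}}<1/n$ and $\|\nabla u_{n,k(n)}-G\|_{L^{1}}<1/n$; (b) $\bigl|\int_{\Omega}\varphi_{j}:(\nabla^{2}u_{n,k(n)}-\Gamma)\,dx\bigr|<1/n$ for $j=1,\dots,n$; (c) the interfacial energy of $u_{n,k(n)}$ exceeds $I_{1}(g_{n},G,\Gamma,\Omega)$ by at most $2/n$; and (d) $\|\nabla^{2}u_{n,k(n)}\|_{L^{1}}$ is bounded uniformly in $n$. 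Conditions (a)--(c) are routine from the convergences as $k\to\infty$, using the separability-based countable-dense-family extraction for (b). For (d) one uses that $\{\nabla^{2}u_{n,k}\}_{k}$ is $L^{1}$-bounded for each fixed $n$ (being weak-$*$ convergent), and refines the choice of $k(n)$ so that $\|\nabla^{2}u_{n,k(n)}\|_{L^{1}}\le\|\Gamma\|_{L^{1}}+1$, which is achievable along a subsequence realizing the liminf of these norms.

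From (a) combined with $g_{n}\to g$ in $L^{1}$ one obtains $u_{n,k(n)}\to g$ in $L^{1}$, and analogously $\nabla u_{n,k(n)}\to G$ in $L^{1}$; from (b), (d), the density of $\{\varphi_{j}\}$ in $C_{0}$, and metrizability of weak-$*$ convergence on bounded subsets of $\mathcal{M}$, one deduces $\nabla^{2}u_{n,k(n)}\weakst\Gamma$. Hence $\{u_{n,k(n)}\}$ is admissible for $I_{1}(g,G,\Gamma,\Omega)$, and (c) gives
\[
I_{1}(g,G,\Gamma,\Omega)\le \liminf_{n\to+\infty}\int_{S_{u_{n,k(n)}}\cap\Omega}\Psi_{1}(x,[u_{n,k(n)}(x)],\nu_{u_{n,k(n)}}(x))\,d\mathcal{H}^{N-1}(x)\le L.
\]
The argument for $I_{2}$ is identical with $\{v_{n,k}\}_{k}\subset SBV(\Omega;\mathbb{R}^{d\times N})$ recovering $I_{2}(G_{n},\Gamma,\Omega)$; the new bulk term $\int_{\Omega}W(x,v_{n,k(n)},\nabla v_{n,k(n)})\,dx$ poses no extra difficulty since by Proposition \ref{fixG} the first slot of $W$ may be replaced by $G_{n}$, and then by $G$ at a cost $O(1/n)$ via (H2) and $v_{n,k(n)}\to G$ in $L^{1}$. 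The chief technical obstacle is step (d): preserving weak-$*$ convergence of the top-order gradient under diagonal extraction, which requires maintaining a uniform $L^{1}$-bound on $\nabla^{2}u_{n,k(n)}$ (resp.\ $\nabla v_{n,k(n)}$) along the diagonal.
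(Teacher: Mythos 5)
Your plan---a diagonal extraction---is the right framework, and indeed the paper only remarks that ``a standard diagonalization argument yields'' the result without supplying details. However, your justification of step (d) does not hold. You claim $k(n)$ can be chosen along a subsequence realizing $\liminf_k\|\nabla^2 u_{n,k}\|_{L^1}$ so that $\|\nabla^2 u_{n,k(n)}\|_{L^1}\le\|\Gamma\|_{L^1}+1$, but weak-$*$ convergence $\nabla^2 u_{n,k}\,\mathcal{L}^N\weakst\Gamma\,\mathcal{L}^N$ only gives the \emph{lower semicontinuity} inequality $\liminf_k\|\nabla^2 u_{n,k}\|_{L^1}\ge\|\Gamma\|_{L^1}$; this liminf may be strictly larger and may grow without bound in $n$ (oscillations or concentrations cancel under weak-$*$ testing but not in total variation, and nothing in the definition of $I_1$, which only measures the $\Psi_1$ jump energy of $u_{n,k}$, controls the size of $\nabla^2 u_{n,k}$). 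Without a uniform $L^1$ bound on the diagonal second gradients, convergence against the countable dense family $\{\varphi_j\}\subset C_0$ does not imply weak-$*$ convergence, so admissibility of your diagonal sequence for $I_1(g,G,\Gamma,\Omega)$ is not established.

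The way to close the gap is the modification trick the paper already uses in the proof of Theorem \ref{decomposition}. Choose $k(n)$ using only your conditions (a) and (c); then $u_{n,k(n)}\to g$ and $\nabla u_{n,k(n)}\to G$ in $L^1$, the $\liminf$ of the $\Psi_1$ energies is $\le L$, but $\nabla^2 u_{n,k(n)}$ is uncontrolled. Fix an auxiliary sequence $v_n\in SBV(\Omega;{\mathbb{R}}^{d\times N})$ with $v_n\to G$ in $L^1$ and $\nabla v_n\weakst\Gamma$ (one exists, e.g.\ from the construction in Lemma \ref{nonempty}). By Theorem \ref{Al} choose $h_n\in SBV(\Omega;{\mathbb{R}}^d)$ with $\nabla h_n=v_n-\nabla u_{n,k(n)}$ and $\|D^s h_n\|(\Omega)\le C\|v_n-\nabla u_{n,k(n)}\|_{L^1}$, and by Lemma \ref{ctap} choose piecewise constant $\tilde h_n$ with $\|\tilde h_n-h_n\|_{L^1}<1/n$ and $\bigl|\|D\tilde h_n\|(\Omega)-\|Dh_n\|(\Omega)\bigr|<1/n$. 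Set $\tilde u_n:=u_{n,k(n)}+h_n-\tilde h_n$. Then $\tilde u_n\to g$, $\nabla\tilde u_n=v_n\to G$ in $L^1$, $\nabla^2\tilde u_n=\nabla v_n\weakst\Gamma$, so $\{\tilde u_n\}$ is admissible, and by (H8), (H5) the extra $\Psi_1$ energy coming from $S_{h_n}\cup S_{\tilde h_n}$ is bounded by $C\|Dh_n\|(\Omega)+C/n\to0$ because $\|v_n-\nabla u_{n,k(n)}\|_{L^1}\to0$. This yields $I_1(g,G,\Gamma,\Omega)\le L$ without ever needing conditions (b) or (d), and the same replacement works for $I_2$ (using (H2) for the bulk term as you indicate).
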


\smallskip

\subsection{Properties of the density functions}

\medskip

In order to prove the upper bound inequality for the surface energy terms of
both $I_1$ and $I_2$ we need the following properties of the density
functions $W_1$, $W_2$, $\gamma_1$ and $\gamma_2$.

\begin{proposition}
\label{propW1}

\begin{itemize}
\item[$i)$] $W_1(x,0) = 0, \forall x \in \O $; \smallskip

\item[$ii)$] $|W_1(x,A) - W_1(x,B)| \leq C |A - B|, \forall x \in \O ,
\forall A, B \in {\mathbb{R}}^{d\times N}$.
\end{itemize}
\end{proposition}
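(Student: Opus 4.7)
The plan is to construct explicit competitors for the infimum defining $W_{1}(x,A)$, bridging between admissible functions for $W_{1}(x,A)$ and those for $W_{1}(x,B)$ via Lemma \ref{matias-lemma}.

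For $i)$, the function $u\equiv 0$ lies in $SBV^{2}(Q;\mathbb{R}^{d})$, has zero trace on $\partial Q$, vanishing gradient, and empty jump set, so it is admissible for $W_{1}(x,0)$ and produces zero surface energy. Since $\Psi _{1}\geq 0$ by (H5), this must be the minimum, hence $W_{1}(x,0)=0$.

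For $ii)$, the strategy is to perturb a near-optimal competitor for $W_{1}(x,A)$ into one for $W_{1}(x,B)$ at a controlled surface cost. Apply Lemma \ref{matias-lemma} with the constant matrix $B-A$ to obtain $v\in SBV(Q;\mathbb{R}^{d})$ satisfying $v|_{\partial Q}=0$, $\nabla v=B-A$ a.e.\ in $Q$, and $\Vert D^{s}v\Vert (Q)\leq C(N)|B-A|$. Because $\nabla v$ is constant, $\nabla v\in SBV(Q;\mathbb{R}^{d\times N})$ trivially, so in fact $v\in SBV^{2}(Q;\mathbb{R}^{d})$. Then for any $u$ admissible for $W_{1}(x,A)$, the function $w:=u+v$ belongs to $SBV^{2}(Q;\mathbb{R}^{d})$, satisfies $w|_{\partial Q}=0$ and $\nabla w=B$ a.e., and is therefore admissible for $W_{1}(x,B)$. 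Since $S_{w}\subseteq S_{u}\cup S_{v}$, sub-additivity (H8) together with the intrinsic symmetry $\Psi_{1}(x,\lambda,\nu)=\Psi_{1}(x,-\lambda,-\nu)$ of the surface density (inherited from the indeterminacy of the jump triple) handles the overlap $S_{u}\cap S_{v}$, where the normals are $\mathcal{H}^{N-1}$-a.e.\ parallel up to sign. Combining this with the upper bound in (H5) gives
\begin{equation*}
\int_{S_{w}\cap Q}\Psi _{1}(x,[w],\nu _{w})\,d\mathcal{H}^{N-1}\leq \int_{S_{u}\cap Q}\Psi _{1}(x,[u],\nu _{u})\,d\mathcal{H}^{N-1}+K_{1}C(N)|B-A|.
\end{equation*}
Taking the infimum over $u$ yields $W_{1}(x,B)\leq W_{1}(x,A)+K_{1}C(N)|B-A|$, and the Lipschitz estimate with $C:=K_{1}C(N)$ follows by exchanging the roles of $A$ and $B$.

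No step is a serious obstacle; the only delicate bookkeeping is on the overlap $S_{u}\cap S_{v}$, which is routine once (H8) and the reflection invariance of $\Psi_{1}$ are invoked. Note that the same template (admissible zero function for $i)$; additive correction coming from Lemma \ref{matias-lemma} for $ii)$) will likely apply verbatim to establish analogous properties for $W_{2}$, $\gamma_{1}$, and $\gamma_{2}$.
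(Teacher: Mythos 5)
Your argument is essentially identical to the paper's: $i)$ uses the zero competitor, and $ii)$ constructs $w=u+v$ with $v$ from Lemma~\ref{matias-lemma}, then applies sub-additivity (H8) and the upper bound in (H5). The extra care you take on $S_u\cap S_v$ and the sign-flip symmetry $\Psi_1(x,-\lambda,-\nu)=\Psi_1(x,\lambda,\nu)$ is a legitimate observation (that symmetry is needed for the surface integral to be unambiguous and is implicitly assumed), but the paper simply invokes (H8) and suppresses this bookkeeping.
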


\begin{proof}
The proof of $i)$ is immediate by noticing that the function $u=0$ is
admissible for $W_1(x,0)$. To prove $ii)$ we will show that $W_1(x,B) \leq
W_1(x,A) + C |A - B|$, $\forall x \in \O , \forall A, B \in {\mathbb{R}}%
^{d\times N}$; the reverse inequality follows by interchanging the roles of $%
A$ and $B$.

Fix $\varepsilon > 0$ and let $u \in SBV^2(Q;{\mathbb{R}}^d)$ be such that $%
u|_{\partial Q} = 0$, $\nabla u = A$ a.e. in $Q$ and 
\begin{equation*}
\int_{S_u \cap Q}\Psi_1(x,[u(y)],\nu_u(y)) \, d \cH^{N-1}(y) \leq W_1(x,A) +
\varepsilon.
\end{equation*}
By Lemma \ref{matias-lemma}, let $v \in SBV^2(Q;{\mathbb{R}}^d)$ be such
that $v|_{\partial Q} = 0$, $\nabla v = B - A$ a.e. in $Q$ and $%
|D^sv|(Q) \leq C|B-A|$, and define $w = u + v$. Then $w$ is admissible for $%
W_1(x,B)$ so by (H8) and (H5), 
\begin{eqnarray*}
W_1(x,B) &\leq& \int_{S_w \cap Q}\Psi_1(x,[w(y)],\nu_w(y)) \, d \cH^{N-1}(y)
\\
&\leq& \int_{S_u \cap Q}\Psi_1(x,[u(y)],\nu_u(y)) \, d \cH^{N-1}(y) +
\int_{S_v \cap Q}\Psi_1(x,[v(y)],\nu_v(y)) \, d \cH^{N-1}(y) \\
& \leq & W_1(x,A) + \varepsilon + C|D^sv|(Q) \leq W_1(x,A) + \varepsilon +
C|B-A|.
\end{eqnarray*}
Hence the result follows by letting $\varepsilon \to 0^+$.
\end{proof}

\begin{proposition}
\label{propgamma1}

\begin{itemize}
\item[$i)$] $\gamma_1(x,\lambda,\nu) \leq C|\lambda|, \forall
(x,\lambda,\nu) \in \O \times {\mathbb{R}}^d\times S^{N-1}$; \smallskip

\item[$ii)$] for every $x_0 \in \O $ and for every $\varepsilon > 0$ there
exists $\delta > 0$ such that 
\begin{equation*}
|x - x_0| < \delta \Rightarrow |\gamma_1(x_0,\lambda, \nu) -
\gamma_1(x,\lambda, \nu)| \leq \varepsilon C (1 + |\lambda|), \forall
(x,\lambda,\nu) \in \O \times {\mathbb{R}}^d\times S^{N-1};
\end{equation*}

\item[$iii)$] $|\gamma_1(x,\lambda,\nu) - \gamma_1(x,\lambda^{\prime },\nu)|
\leq C |\lambda - \lambda^{\prime }|$, $\forall (x,\lambda,\nu),
(x,\lambda^{\prime },\nu) \in \O \times {\mathbb{R}}^d\times S^{N-1}$;
\smallskip

\item[$iv)$] $\gamma_1$ is upper semicontinuous in $\O \times {\mathbb{R}}%
^d\times S^{N-1}$.
\end{itemize}
\end{proposition}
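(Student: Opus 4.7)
The plan is to establish the four items in turn by explicit constructions within the infimum defining $\gamma_1$, using only the structural hypotheses (H5)--(H8). Items (i)--(iii) follow from direct test-function arguments; item (iv) is the most delicate and is where I expect the main obstacle to lie.

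For (i), the piecewise constant function $u(y) := \gamma_{(\lambda,\nu)}(y)$, extended from its boundary-trace formula to all of $Q_\nu$, is admissible: it satisfies $u|_{\partial Q_\nu} = \gamma_{(\lambda,\nu)}$ tautologically and $\nabla u = 0$ a.e., while its jump set $Q_\nu \cap \{y\cdot\nu = 0\}$ has unit $\mathcal{H}^{N-1}$-measure, jump $\lambda$ and normal $\nu$; hence (H5) gives $\gamma_1(x,\lambda,\nu) \le K_1|\lambda|$. For (iii), I fix $\eta > 0$ and an $\eta$-near optimal $u$ for $\gamma_1(x,\lambda,\nu)$; the piecewise constant $v := \gamma_{(\lambda'-\lambda,\nu)}$ satisfies $\nabla v = 0$ and shifts the boundary trace to $\gamma_{(\lambda',\nu)}$, so $w := u+v$ is admissible for $\gamma_1(x,\lambda',\nu)$. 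On $S_w \subset S_u \cup S_v$, sub-additivity (H8) gives $\Psi_1(x,[w],\nu_w) \le \Psi_1(x,[u],\nu_u) + \Psi_1(x,[v],\nu_v)$ (any overlap $S_u \cap S_v$ of positive $\mathcal{H}^{N-1}$-measure forces $\nu_u = \pm\nu$, where (H8) applies directly). Integrating and bounding the $v$-contribution by (H5) yields $\gamma_1(x,\lambda',\nu) \le \gamma_1(x,\lambda,\nu) + \eta + K_1|\lambda'-\lambda|$; letting $\eta \to 0^+$ and swapping $\lambda \leftrightarrow \lambda'$ gives (iii).

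For (ii), the same near-optimal $u$ is also admissible for $\gamma_1(x_0,\lambda,\nu)$. Applying (H6) and the lower bound in (H5),
\[
\gamma_1(x_0,\lambda,\nu) \le \int_{S_u}\Psi_1(x,[u],\nu_u)\,d\mathcal{H}^{N-1} + \varepsilon C_1 \int_{S_u}|[u]|\,d\mathcal{H}^{N-1} \le \gamma_1(x,\lambda,\nu) + \eta + \frac{\varepsilon C_1}{c_1}\bigl(\gamma_1(x,\lambda,\nu) + \eta\bigr),
\]
and item (i) bounds $\gamma_1(x,\lambda,\nu) \le K_1|\lambda|$. Letting $\eta \to 0^+$ and symmetrizing delivers the required continuity estimate, in fact with the stronger bound $\varepsilon C |\lambda|$ on the right-hand side.

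For (iv), joint upper semicontinuity reduces, via (ii) and (iii), to upper semicontinuity in $\nu$ alone. Given $\nu_n \to \nu$ and an $\eta$-near optimal $u$ for $\gamma_1(x,\lambda,\nu)$, I pick rotations $R_n \to I$ with $R_n\nu = \nu_n$ so that $R_n Q_\nu = Q_{\nu_n}$, and set $u_n(y) := u(R_n^\top y)$; this is admissible for $\gamma_1(x,\lambda,\nu_n)$, and its surface energy equals $\int_{S_u}\Psi_1(x,[u],R_n\nu_u)\,d\mathcal{H}^{N-1}$. The main obstacle is passing to the $\limsup$ inside this integral, since no continuity of $\Psi_1$ in its third argument is among the stated hypotheses; I would circumvent this by first approximating $u$ in energy by a competitor with polyhedral jump set (via Theorem \ref{BaldoL3.1}), so that $\nu_u$ takes only finitely many values and the limit in $n$ of the integrand reduces to checking continuity of $\Psi_1(x,\lambda,\cdot)$ at those finitely many directions, which is handled within the $\eta$-approximation.
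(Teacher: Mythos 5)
Your treatment of items (i)--(iii) is essentially the same as the paper's: you test with $\gamma_{(\lambda,\nu)}$ for (i), add the piecewise constant shift $v=\gamma_{(\lambda'-\lambda,\nu)}$ (which equals the paper's $\gamma_{(\lambda',\nu)}-\gamma_{(\lambda,\nu)}$) and invoke (H8), (H5) for (iii), and compare the same near-optimal competitor at $x$ and $x_0$ via (H6), (H5), and (i) for (ii). These are correct.

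For (iv) you have put your finger on a genuine issue, but your proposed fix does not close it. After the rotation change of variables, a competitor $u$ for $\gamma_1(x,\lambda,\nu)$ produces a competitor for $\gamma_1(x,\lambda,\nu_n)$ whose surface energy is $\int_{S_u}\Psi_1(x,[u],R_n\nu_u)\,d\mathcal{H}^{N-1}$, as you write. Passing $R_n\to R$ inside this integral requires some form of continuity (or at least upper semicontinuity) of $\Psi_1$ in its third argument, and (H5)--(H8) do not provide it. The polyhedral approximation does not circumvent this: once the jump set has finitely many normals $\nu_1,\dots,\nu_L$, you still need $\Psi_1(x,\lambda,R_n\nu_j)\to\Psi_1(x,\lambda,\nu_j)$ for each $j$, which is exactly the missing continuity at finitely many points rather than along the whole jump set. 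So the claim that this is ``handled within the $\eta$-approximation'' does not hold up. Notice that the paper's own treatment sidesteps the question rather than resolving it: the cell formula \eqref{ReN} is stated on $Q$ with boundary datum $\gamma_{(\lambda,e_N)}$ and with $\Psi_1(x,[u],\nu_u)$ rather than $\Psi_1(x,[u],R\nu_u)$, so it is written as if $\gamma_1$ were independent of $\nu$; this silently presupposes a rotational invariance of $\Psi_1$ that is not among the stated hypotheses (the rotations $R_n$ introduced immediately afterward play no role in the estimate for $\gamma_1$). The argument becomes rigorous if one additionally assumes $\Psi_1$ continuous (or merely upper semicontinuous) in $\nu$, which is standard in the references the paper cites; with that in place your rotation-based construction and the polyhedral reduction become a valid, and in fact more transparent, route than the paper's.
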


\begin{proof}
The proof of $i)$ follows immediately from the fact that the function $%
\gamma_{(\lambda,\nu)}$ is admissible for $\gamma_1(x,\lambda,\nu)$ and from
hypotheses (H5).

To prove $ii)$ fix $x_0 \in \O $ and $\varepsilon > 0$. By (H6) let $\delta
> 0$ be such that 
\begin{equation}  \label{h6}
|x-x_0| < \delta \Rightarrow |\Psi_1(x_0,\lambda, \nu) - \Psi_1(x,\lambda,
\nu)| \leq \varepsilon C |\lambda|.
\end{equation}
Let $u_n \in SBV^2(Q_{\nu};{\mathbb{R}}^d)$ be such that $u_n|_{\partial Q_{\nu}} = \gamma_{(\lambda,\nu)}$, $\nabla u_n = 0$ a.e. in $%
Q_{\nu}$ and 
\begin{equation*}
\int_{S_{u_n} \cap Q_{\nu}}\Psi_1(x_0,[u_n(y)],\nu_{u_n}(y)) \, d \cH%
^{N-1}(y) \leq \gamma_1(x_0,\lambda,\nu) + \frac{1}{n}.
\end{equation*}
By (H5) and $i)$ we have 
\begin{eqnarray}  \label{h5}
\int_{S_{u_n} \cap Q_{\nu}}|[u_n(y)]| \, d \cH^{N-1}(y) &\leq& C
\int_{S_{u_n} \cap Q_{\nu}}\Psi_1(x_0,[u_n(y)],\nu_{u_n}(y)) \, d \cH%
^{N-1}(y)  \notag \\
&\leq& C \left(\gamma_1(x_0,\lambda,\nu) + \frac{1}{n}\right) \leq C (1 + |\lambda|).
\end{eqnarray}
Hence, if $|x - x_0| < \delta$, it follows by \eqref{h6} and \eqref{h5} that 
\begin{eqnarray*}
& & \gamma_1(x,\lambda, \nu) - \gamma_1(x_0,\lambda, \nu) \\
&& \leq \int_{S_{u_n} \cap Q_{\nu}}\Psi_1(x,[u_n(y)],\nu_{u_n}(y)) \, d \cH%
^{N-1}(y) - \int_{S_{u_n} \cap Q_{\nu}}\Psi_1(x_0,[u_n(y)],\nu_{u_n}(y)) \,
d \cH^{N-1}(y) + \frac{1}{n} \\
& &\leq \int_{S_{u_n} \cap Q_{\nu}}\varepsilon C |[u_n(y)]| \, d \cH%
^{N-1}(y) + \frac{1}{n} \\
& &\leq \varepsilon C (1 + |\lambda|) + \frac{1}{n}.
\end{eqnarray*}
Letting $n \to + \infty$ we conclude that 
\begin{equation*}
\gamma_1(x,\lambda, \nu) - \gamma_1(x_0,\lambda, \nu) \leq \varepsilon C (1
+ |\lambda|).
\end{equation*}
Changing the roles of $x$ and $x_0$ we obtain the result.

We now prove $iii)$. Let $u \in SBV^2(Q_{\nu};{\mathbb{R}}^d)$ be such that $%
u|_{\partial Q_{\nu}} = \gamma_{(\lambda,\nu)}$, $\nabla u = 0$ a.e. in 
$Q_{\nu}$ and 
\begin{equation*}
\int_{S_{u} \cap Q_{\nu}}\Psi_1(x,[u(y)],\nu_{u}(y)) \, d \cH^{N-1}(y) \leq
\gamma_1(x,\lambda,\nu) + \varepsilon.
\end{equation*}
Let $v = \gamma_{(\lambda^{\prime },\nu)} - \gamma_{(\lambda,\nu)}$ and
define $w = u + v$. Since $w$ is admissible for $\gamma_1(x,\lambda^{\prime
},\nu)$ we have by (H8) and (H5), 
\begin{eqnarray*}
\gamma_1(x,\lambda^{\prime }, \nu) &\leq& \int_{S_{w} \cap
Q_{\nu}}\Psi_1(x,[w(y)],\nu_{w}(y)) \, d \cH^{N-1}(y) \\
&\leq& \int_{S_{u} \cap Q_{\nu}} \Psi_1(x,[u(y)],\nu_{u}(y))\, d \cH%
^{N-1}(y) + \int_{S_{v} \cap Q_{\nu}} \Psi_1(x,[v(y)],\nu_{v}(y))\, d \cH%
^{N-1}(y) \\
&\leq& \gamma_1(x,\lambda, \nu) + \varepsilon + \int_{\{y \in Q_{\nu} : y
\cdot \nu = 0\}} \Psi_1(x,\lambda^{\prime }-\lambda,\nu) \, d \cH^{N-1}(y) \\
&\leq& \gamma_1(x,\lambda, \nu) + \varepsilon + C |\lambda^{\prime }-
\lambda|,
\end{eqnarray*}
so to prove the first inequality it suffices to let $\varepsilon \to 0^+$.
The other inequality is obtained in a similar fashion.

To prove $iv)$, taking into account the result of $iii)$ it suffices to show
that $(x,\nu) \to \gamma_1(x,\lambda,\nu)$ is upper semicontinuous, for
every $\lambda \in {\mathbb{R}}^d$. By a change of variables argument,
choosing a rotation $R$ such that $Re_N = \nu$, it is easy to see that 
\begin{equation}  \label{ReN}
\gamma_1(x,\lambda, \nu) = \hspace{-0,25cm} \inf_{u \in SBV^2(Q; {\mathbb{R}}%
^d)}\bigg\{ \int_{S_{u}\cap Q}\hspace{-0,25cm} \Psi_1(x,[u(y)],\nu_u(y)) \, d%
\mathcal{H}^{N-1}(y): u|_{\partial Q} = \gamma_{(\lambda, e_N)}, 
\nabla u = 0 \, \text{a.e. in} \,Q\bigg\}.
\end{equation}
Let $(x_n,\nu_n) \to (x,\nu)$. Given $\varepsilon > 0$, let $u_\varepsilon
\in SBV^2(Q; {\mathbb{R}}^d)$ be such that $u_\varepsilon|_{\partial Q}
= \gamma_{(\lambda, e_N)},$ $\nabla u_\varepsilon = 0$ a.e. in $Q$ and 
\begin{equation}  \label{ueps}
\left|\gamma_1(x,\lambda,\nu) - \int_{S_{u_\varepsilon}\cap
Q}\Psi_1(x,[u_\varepsilon(y)],\nu_{u_\varepsilon}(y)) \, d\mathcal{H}%
^{N-1}(y) \right| < \varepsilon.
\end{equation}
Let $K$ be a compact subset of $\O $ containing a neighborhood of $x$ and
choose $\delta > 0$ such that (H6) is satisfied uniformly in $K$, i.e. 
\begin{equation}  \label{unifK}
y, y^{\prime }\in K, |y-y^{\prime }| < \delta \Rightarrow
|\Psi_1(y,\lambda,\nu) - \Psi_1(y^{\prime },\lambda,\nu)| \leq \varepsilon C
|\lambda|,
\end{equation}
for all $(\lambda,\nu) \in {\mathbb{R}}^d \times S^{N-1}$. Choosing
rotations $R_n$ such that $R_ne_N = \nu_n$, $R_n \to R$, by \eqref{unifK},
(H5) and \eqref{ueps} we have that 
\begin{eqnarray*}
& & \left|\int_{S_{u_\varepsilon}\cap
Q}\Psi_1(x,[u_\varepsilon(y)],\nu_{u_\varepsilon}(y)) \, d\mathcal{H}%
^{N-1}(y) - \int_{S_{u_\varepsilon}\cap
Q}\Psi_1(x_n,[u_\varepsilon(y)],\nu_{u_\varepsilon}(y)) \, d\mathcal{H}%
^{N-1}(y) \right| \\
& & \leq \int_{S_{u_\varepsilon}\cap Q}\varepsilon C |[u_\varepsilon(y)]| \,
d\mathcal{H}^{N-1}(y) \\
& & \leq \varepsilon C \int_{S_{u_\varepsilon}\cap
Q}\Psi_1(x,[u_\varepsilon(y)],\nu_{u_\varepsilon}(y)) \, d\mathcal{H}%
^{N-1}(y) \\
& & \leq \varepsilon C (\varepsilon + \gamma_1(x,\lambda,\nu)) =
O(\varepsilon).
\end{eqnarray*}
Thus, by \eqref{ReN} and \eqref{ueps}, 
\begin{eqnarray*}
\gamma_1(x_n,\lambda,\nu_n) &\leq& \int_{S_{u_\varepsilon}\cap
Q}\Psi_1(x_n,[u_\varepsilon(y)],\nu_{u_\varepsilon}(y)) \, d\mathcal{H}%
^{N-1}(y) \\
&\leq& O(\varepsilon) + \int_{S_{u_\varepsilon}\cap
Q}\Psi_1(x,[u_\varepsilon(y)],\nu_{u_\varepsilon}(y)) \, d\mathcal{H}%
^{N-1}(y) \\
&\leq& O(\varepsilon) + \gamma_1(x,\lambda,\nu).
\end{eqnarray*}
Therefore, letting $\varepsilon \to 0^+$, we conclude that 
\begin{equation*}
\limsup_{n\to +\infty}\gamma_1(x_n,\lambda,\nu_n) \leq
\gamma_1(x,\lambda,\nu).
\end{equation*}
\end{proof}

\begin{remark}\label{gamma1}{\rm
$\gamma_1(x,\lambda,\nu)$ can be extended to $\O \times \Rb^d \times \Rb^N$ as a
positively homogeneous of degree one function in the third variable in the following way
$$\gamma_1(x,\lambda,\theta) = \left\{
\begin{array}{ll}
|\theta|\gamma_1\big(x,\lambda,\frac{\theta}{|\theta|}\big), & \mbox{if} \; \theta \in \Rb^N \setminus\{0\}\\
0, & \mbox{if} \; \theta = 0.
\end{array}\right.$$
By Proposition \ref{propgamma1} this extension is upper semicontinuous in $\O \times \Rb^d\times \Rb^N$
and satisfies 
$$\gamma_1(x,\lambda,\theta) \leq C |\lambda| |\theta|, \forall (x,\lambda,\theta) \in \O \times \Rb^d\times \Rb^N.$$
Thus there exists a non-increasing sequence of continuous functions $\gamma_1^m : \O \times \Rb^N \to [0,+\infty)$
such that 
$$ \gamma_1(x,\lambda,\theta) = \inf_{m}\gamma_1^m(x,\theta) = \lim_m \gamma_1^m(x,\theta) \leq C |\theta|,
\forall (x,\theta) \in \O \times \Rb^N.$$
}
\end{remark}

\begin{proposition}
\label{propW2}

\begin{itemize}
\item[$i)$] $W_2(x,A,0,0) \leq W(x,A,0), \forall (x,A) \in \O \times {%
\mathbb{R}}^{d\times N}$; \smallskip

\item[$ii)$] for every $x \in \O $, every $A_1, A_2 \in {\mathbb{R}}%
^{d\times N},$ and all $L, M_1,M_2 \in {\mathbb{R}}^{d\times N\times N}$ we
have that 
\begin{equation*}
|W_2(x,A_1,L,M_1) - W_2(x,A_2,L,M_2)| \leq C (|A_1 - A_2| + |M_1-M_2|).
\end{equation*}
\end{itemize}
\end{proposition}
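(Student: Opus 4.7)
\textbf{Plan of proof for Proposition \ref{propW2}.} For part $i)$, the strategy is simply to exhibit the zero function as a competitor in the infimum defining $W_2(x,A,0,0)$. The constant function $u \equiv 0$ satisfies $u|_{\partial Q} = L\cdot y$ with $L = 0$, has $\nabla u = 0$ a.e.\ so that $\int_Q \nabla u\,dy = 0 = M$, and has empty jump set. Hence $u \equiv 0$ is admissible and yields
\begin{equation*}
W_2(x,A,0,0) \leq \int_Q W(x,A,0)\,dy = W(x,A,0),
\end{equation*}
which proves $i)$.

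For part $ii)$ the approach is the standard perturbation argument. By symmetry in $(A_1,M_1) \leftrightarrow (A_2,M_2)$, it suffices to establish the one-sided bound
\begin{equation*}
W_2(x,A_2,L,M_2) \leq W_2(x,A_1,L,M_1) + C\bigl(|A_1-A_2| + |M_1-M_2|\bigr).
\end{equation*}
I would fix $\varepsilon > 0$ and select a near-optimal $u \in SBV(Q;\mathbb{R}^{d\times N})$ competing in the infimum defining $W_2(x,A_1,L,M_1)$, so that $u|_{\partial Q}(y) = L\cdot y$, $\int_Q \nabla u\,dy = M_1$, and the bulk-plus-surface functional evaluated at $u$ is within $\varepsilon$ of $W_2(x,A_1,L,M_1)$.

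The key step is to correct the mean of $\nabla u$ from $M_1$ to $M_2$ without disturbing the trace on $\partial Q$. I would invoke Lemma \ref{matias-lemma} (in its matrix-valued version, applied componentwise to the $d \times N$ target) with the constant matrix $M_2 - M_1 \in \mathbb{R}^{d\times N\times N}$ to produce $v \in SBV(Q;\mathbb{R}^{d\times N})$ with $v|_{\partial Q} = 0$, $\nabla v = M_2 - M_1$ a.e.\ in $Q$, and $\|D^s v\|(Q) \leq C |M_2 - M_1|$. Setting $w := u + v$, the function $w$ inherits the correct boundary datum $w|_{\partial Q}(y) = L\cdot y$ and satisfies $\int_Q \nabla w\,dy = M_1 + (M_2 - M_1) = M_2$, so that $w$ is admissible for $W_2(x,A_2,L,M_2)$.

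It remains to estimate each term. For the bulk, hypothesis (H2) gives
\begin{equation*}
\int_Q W(x,A_2,\nabla w)\,dy \leq \int_Q W(x,A_1,\nabla u)\,dy + C\bigl(|A_1-A_2| + |M_1-M_2|\bigr),
\end{equation*}
after using $\nabla w - \nabla u = M_2 - M_1$ and $|Q| = 1$. For the surface term, the sub-additivity (H8) applied pointwise on $S_w \subset S_u \cup S_v$ (distinguishing the cases $\nu_u = \pm \nu_v$ on $S_u \cap S_v$ via the even-in-$\nu$ convention for $\Psi_2$) gives
\begin{equation*}
\int_{S_w \cap Q} \Psi_2(x,[w],\nu_w)\,d\mathcal{H}^{N-1} \leq \int_{S_u \cap Q} \Psi_2(x,[u],\nu_u)\,d\mathcal{H}^{N-1} + \int_{S_v \cap Q} \Psi_2(x,[v],\nu_v)\,d\mathcal{H}^{N-1},
\end{equation*}
and the upper bound in (H5) together with $\|D^s v\|(Q) \leq C|M_2-M_1|$ controls the second integral by $C|M_2 - M_1|$. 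Summing these estimates, using admissibility of $w$ for $W_2(x,A_2,L,M_2)$, and sending $\varepsilon \to 0^+$ yields the desired Lipschitz bound. The only delicate point is the pointwise sub-additivity on the intersection $S_u \cap S_v$, which is handled in the standard way by invoking the homogeneity (H7) and the convention that $\Psi_2$ is extended as a positively one-homogeneous function of degree one in the last variable on all of $\mathbb{R}^N$.
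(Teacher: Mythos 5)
Your proof is correct and follows essentially the same route as the paper: the zero competitor for part $i)$, and for part $ii)$ the standard perturbation argument via Lemma \ref{matias-lemma} to produce $v$ with $\nabla v = M_2 - M_1$ and controlled singular variation, followed by the sub-additivity (H8), the Lipschitz bound (H2), and the upper bound in (H5). The extra remark about the behaviour of $\Psi_2$ on $S_u \cap S_v$ is a fine point the paper glosses over, but your handling of it is consistent with the conventions in Remark \ref{rem3.1}(1).
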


\begin{proof}
The proof of $i)$ is immediate since the function $u=0$ is admissible for $%
W_2(x,A,0,0)$.

To prove $ii)$ we will show that 
\begin{equation*}
W_2(x,A_1,L,M_1) \leq W_2(x,A_2,L,M_2) + C (|A_1 - A_2| + |M_1-M_2|),
\end{equation*}
$\forall x \in \O , \forall A_1, A_2 \in {\mathbb{R}}^{d\times N}, \forall
L, M_1,M_2 \in {\mathbb{R}}^{d\times N\times N}$; the reverse inequality
follows by interchanging the roles of $A_1$ and $A_2$ and $M_1$ and $M_2$.

Fix $\varepsilon > 0$ and let $u \in SBV(Q;{\mathbb{R}}^{d\times N})$ be
such that $u|_{\partial Q}(y) = Ly$, $\displaystyle \int_Q \nabla
u(y)\, dy = M_2$ and 
\begin{equation*}
\int_Q W(x,A_2,\nabla u(y)) \, dy + \int_{S_u \cap
Q}\Psi_2(x,[u(y)],\nu_u(y)) \, d \cH^{N-1}(y) \leq W_2(x,A_2,L,M_2) +
\varepsilon.
\end{equation*}
By Lemma \ref{matias-lemma}, let $v \in SBV(Q;{\mathbb{R}}^{d\times N})$ be
such that $v|_{\partial Q} = 0$, $\nabla v = M_1 - M_2$ a.e. in $Q$ and 
$|D^sv|(Q) \leq C|M_1-M_2|$, and define $w = u + v$. Then $w$ is admissible
for $W_2(x,A_1,L,M_1)$ so by (H8), (H2) and (H5), 
\begin{eqnarray*}
W_2(x,A_1,L,M_1) &\leq& \int_Q W(x,A_1,\nabla w(y)) \, dy + \int_{S_w \cap
Q}\Psi_2(x,[w(y)],\nu_w(y)) \, d \cH^{N-1}(y) \\
&\leq& \int_Q W(x,A_1,\nabla u(y) + M_1-M_2) \, dy \\
& & + \int_{S_u \cap Q}\hspace{-0,2cm}\Psi_2(x,[u(y)],\nu_u(y)) \, d \cH%
^{N-1}(y) + \int_{S_v \cap Q}\hspace{-0,2cm}\Psi_2(x,[v(y)],\nu_v(y)) \, d %
\cH^{N-1}(y) \\
& \leq & \int_Q W(x,A_2,\nabla u(y))\, dy + C(|A_1-A_2| + |M_1-M_2|) \\
& & + \int_{S_u \cap Q}\Psi_2(x,[u(y)],\nu_u(y)) \, d \cH^{N-1}(y) +
C|D^sv|(Q) \\
&\leq& W_2(x,A_2,L,M_2) + \varepsilon + C(|A_1-A_2| + |M_1-M_2|),
\end{eqnarray*}
thus to conclude the desired inequality it suffices to let $\varepsilon \to
0^+$.
\end{proof}

\begin{proposition}
\label{propgamma2}

\begin{itemize}
\item[$i)$] $\gamma_2(x,A,\Lambda,\nu) \leq C|\Lambda|, \forall
(x,A,\Lambda,\nu) \in \O \times {\mathbb{R}}^{d\times N}\times {\mathbb{R}}%
^{d\times N}\times S^{N-1}$; \smallskip

\item[$ii)$] for every $x_0 \in \O $ and for every $\varepsilon > 0$ there
exists $\delta > 0$ such that 
\begin{equation*}
|x - x_0| < \delta \Rightarrow |\gamma_2(x_0,A,\Lambda, \nu) -
\gamma_2(x,A,\Lambda, \nu)| \leq \varepsilon C (1 + |\Lambda|),
\end{equation*}
$\forall (x,A,\Lambda,\nu) \in \O \times {\mathbb{R}}^{d\times N}\times {%
\mathbb{R}}^{d\times N} \times S^{N-1};$

\item[$iii)$] for every $(x,A_1,\Lambda_1,\nu), (x,A_2,\Lambda_2,\nu) \in \O %
\times {\mathbb{R}}^{d\times N}\times {\mathbb{R}}^{d\times N}\times S^{N-1}$
we have that 
\begin{equation*}
|\gamma_2(x,A_1,\Lambda_1,\nu) - \gamma_2(x,A_2,\Lambda_2,\nu)| \leq C
(|A_1-A_2| + |\Lambda_1 - \Lambda_2|),
\end{equation*}

\item[$iv)$] $\gamma_2$ is upper semicontinuous in $\O \times {\mathbb{R}}%
^{d\times N}\times {\mathbb{R}}^{d\times N}\times S^{N-1}$.
\end{itemize}
\end{proposition}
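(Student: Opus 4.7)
The strategy parallels the proof of Proposition \ref{propgamma1}, modified to accommodate the bulk term $\int_{Q_\nu}W^\infty(x,A,\nabla u)\,dy$ that now enters the definition of $\gamma_2$.

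For (i), plug in $u=\gamma_{(\Lambda,\nu)}$: its gradient vanishes a.e.\ in $Q_\nu$, so \eqref{h1infty} gives $W^\infty(x,A,0)=0$ and the bulk term is zero, while the jump set is the flat interface $\{y\cdot\nu=0\}\cap Q_\nu$ with jump $\Lambda$ and normal $\nu$; (H5) then yields $\gamma_2(x,A,\Lambda,\nu)\leq K_2|\Lambda|$. For (ii), given $\varepsilon>0$ take a common $\delta>0$ furnished by (H6) and \eqref{h3infty} such that $|x-x_0|<\delta$ implies both $|\Psi_2(x,\Lambda',\nu')-\Psi_2(x_0,\Lambda',\nu')|\leq \varepsilon C|\Lambda'|$ and $|W^\infty(x,A,M)-W^\infty(x_0,A,M)|\leq \varepsilon C|M|$. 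Selecting a $1/n$-almost-optimal competitor $u_n$ for $\gamma_2(x_0,A,\Lambda,\nu)$, coercivity from (H5) and \eqref{h1infty}, together with (i), yields $\|\nabla u_n\|_{L^1(Q_\nu)}+\int_{S_{u_n}\cap Q_\nu}|[u_n]|\,d\mathcal{H}^{N-1}\leq C(1+|\Lambda|)$; using $u_n$ as a competitor for $\gamma_2(x,A,\Lambda,\nu)$, the uniform estimates give the bound after sending $n\to\infty$ and swapping $x\leftrightarrow x_0$.

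For (iii), I would first note that $W^\infty$ is in fact independent of $A$: applying (H2), dividing by $t$, letting $t\to+\infty$, and invoking $|\limsup f_n-\limsup g_n|\leq \limsup|f_n-g_n|$ shows $W^\infty(x,A_1,M)=W^\infty(x,A_2,M)$. Hence $\gamma_2$ is independent of its second argument and the $|A_1-A_2|$ contribution is automatic. For the Lipschitz dependence on $\Lambda$, fix $x,A,\nu$, take an $\varepsilon$-optimal competitor $u$ for $\gamma_2(x,A,\Lambda_1,\nu)$, and set $w:=u+\gamma_{(\Lambda_2-\Lambda_1,\nu)}$. Then $\nabla w=\nabla u$, $\int_{Q_\nu}\nabla w\,dy=0$, $w|_{\partial Q_\nu}=\gamma_{(\Lambda_2,\nu)}$, and $S_w\subseteq S_u\cup\{y\cdot\nu=0\}$, so by (H5) and (H8),
\begin{equation*}
\gamma_2(x,A,\Lambda_2,\nu)\leq\gamma_2(x,A,\Lambda_1,\nu)+\varepsilon+K_2|\Lambda_2-\Lambda_1|,
\end{equation*}
and swapping roles gives the claim.

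For (iv), I would reduce to upper semicontinuity in $(x,\nu)$ by (iii). Given $(x_n,\nu_n)\to(x,\nu)$, pick rotations $R,R_n$ with $Re_N=\nu$, $R_ne_N=\nu_n$, $R_n\to R$, and set $\tilde R_n:=RR_n^{-1}\to I$. Given an $\varepsilon$-optimal competitor $u^\varepsilon$ on $Q_\nu$, transport it via $u^\varepsilon_n(y):=u^\varepsilon(\tilde R_n y)$ to an admissible competitor on $Q_{\nu_n}$ (the boundary trace becomes $\gamma_{(\Lambda,\nu_n)}$; the mean gradient remains zero). A change of variables $z=\tilde R_n y$ converts the energy into an integral on $Q_\nu$ in which $\nabla u^\varepsilon$ is replaced by $\nabla u^\varepsilon\tilde R_n$ inside $W^\infty$ and $\nu_{u^\varepsilon}$ by $\tilde R_n^{-1}\nu_{u^\varepsilon}$ inside $\Psi_2$. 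The $W^\infty$ contribution is controlled via \eqref{h2infty} (absorbing the tensor rotation $\tilde R_n\to I$) and \eqref{h3infty}; the $\Psi_2$ contribution is handled by combining (H6) with the continuous regularization scheme of Remark \ref{gamma1} applied to the extension of $\Psi_2$ to $\mathbb{R}^N$ from Remark \ref{rem3.1}(1). This last item is the main obstacle: the bulk density $W^\infty(x,A,\nabla u)$ forces $\tilde R_n$ to act tensorially on $\nabla u$, a complication absent from Proposition \ref{propgamma1}, and the lack of an a priori modulus of continuity for $\Psi_2$ in the normal slot demands a simultaneous regularization argument.
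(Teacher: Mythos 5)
Parts (i), (ii), and (iii) of your proposal coincide with the paper's proof. For (i) you use the same competitor $\gamma_{(\Lambda,\nu)}$ together with $W^\infty(x,A,0)=0$ and (H5); for (ii) you use the same $1/n$-almost minimizer, the coercivity estimates \eqref{h1infty} and (H5) with (i), and the localization in $x$ coming from (H6) and \eqref{h3infty}; for (iii) your observation that $W^\infty$ is independent of $A$ (which indeed follows from (H2) by dividing by $t$, or directly from \eqref{h2infty} with $M_1=M_2$) is precisely what the paper exploits implicitly when it passes from $W^\infty(x,A_2,\nabla w)$ to $W^\infty(x,A_1,\nabla u)$ using \eqref{h2infty}, and the translation $w=u+\gamma_{(\Lambda_2,\nu)}-\gamma_{(\Lambda_1,\nu)}$ is the same.

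For (iv) your strategy (reduce via (iii) to upper semicontinuity in $(x,\nu)$, transport an almost-optimal competitor through rotations $R_n\to R$) is the same skeleton as the paper's, but you stop at the point where the rotation hits the normal slot of $\Psi_2$ and say this ``demands a simultaneous regularization argument'' without carrying it out. That leaves your (iv) incomplete. The paper's proof does not face this obstacle because its change-of-variables identity \eqref{ReNbis} places the rotation only on the $W^\infty$ argument, as $\nabla u(y)R^T$, while the surface term is written as $\Psi_2(x,[u(y)],\nu_u(y))$ \emph{without} an $R$ acting on $\nu_u$; consequently the only thing that varies between $(x,\nu)$ and $(x_n,\nu_n)$ in the surface integral is the base point $x$, which is controlled by (H6) alone, and the rotation enters only through the bulk term, where \eqref{h2infty} gives $|W^\infty(x_n,A,\nabla u_\varepsilon R^T)-W^\infty(x_n,A,\nabla u_\varepsilon R_n^T)|\leq C|\nabla u_\varepsilon|\,|R_n^T-R^T|$. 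You should decide whether you accept \eqref{ReNbis} as stated: a direct change of variables $u(y)=v(Ry)$ would produce $\Psi_2(x,[u(y)],R\nu_u(y))$ in the surface integrand, so if you do not believe the $R$ can be dropped there, you have identified a real gap; but in that case you must actually supply the missing argument, and a vague appeal to regularizing $\Psi_2$ (note: the approximation in Remark \ref{gamma1} regularizes $\gamma_1$ as a function, not $\Psi_1$ or $\Psi_2$, and is used elsewhere in the upper-bound argument via Reshetnyak's theorem) is not a proof. Either way, as written your (iv) is not finished, whereas the remaining three items are fine and match the paper.
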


\begin{proof}
The proof of $i)$ follows immediately from the fact that the function $%
\gamma_{(\Lambda,\nu)}$ is admissible for $\gamma_2(x,A,\Lambda,\nu)$, from
hypotheses (H5) and since $W^{\infty}(x,A,0) = 0$.

To prove $ii)$ fix $x_0 \in \O $ and $\varepsilon > 0$. By \eqref{h3infty}
and (H6) let $\delta > 0$ be such that 
\begin{equation}  \label{h3}
|x-x_0| < \delta \Rightarrow |W^{\infty}(x,A,M) - W^{\infty}(x_0,A,M)| \leq
\varepsilon C |M|
\end{equation}
and 
\begin{equation}  \label{h6bis}
|x-x_0| < \delta \Rightarrow |\Psi_2(x_0,\Lambda, \nu) - \Psi_2(x,\Lambda,
\nu)| \leq \varepsilon C |\Lambda|.
\end{equation}
Let $u_n \in SBV(Q_{\nu};{\mathbb{R}}^{d\times N})$ be such that $u_n|_{\partial Q_{\nu}} = \gamma_{(\Lambda,\nu)}$, $\displaystyle %
\int_{Q_{\nu}}\nabla u_n(y) \, dy = 0$ and 
\begin{equation*}
\int_{Q_{\nu}}W^{\infty}(x_0,A,\nabla u_n(y)) \, dy + \int_{S_{u_n} \cap
Q_{\nu}}\Psi_2(x_0,[u_n(y)],\nu_{u_n}(y)) \, d \cH^{N-1}(y) \leq
\gamma_2(x_0,A,\Lambda,\nu) + \frac{1}{n}.
\end{equation*}
By \eqref{h1infty}, (H5) and $i)$ we have 
\begin{eqnarray}  \label{h5bis}
& & \int_{Q_{\nu}}|\nabla u_n(y)| \, dy + \int_{S_{u_n} \cap
Q_{\nu}}|[u_n(y)]| \, d \cH^{N-1}(y)  \notag \\
& & \leq C \int_{Q_{\nu}}W^{\infty}(x_0,A,\nabla u_n(y)) \, dy + C
\int_{S_{u_n} \cap Q_{\nu}}\Psi_2(x_0,[u_n(y)],\nu_{u_n}(y)) \, d \cH%
^{N-1}(y)  \notag \\
& & \leq C \left(\gamma_2(x_0,A,\Lambda,\nu) + \frac{1}{n}\right) \leq C (1 +
|\Lambda|).
\end{eqnarray}
Hence, if $|x - x_0| < \delta$, it follows by \eqref{h3}, \eqref{h6bis} and %
\eqref{h5bis} that 
\begin{eqnarray*}
& & \gamma_2(x,A,\Lambda, \nu) - \gamma_2(x_0,A,\Lambda, \nu) \\
& & \leq \int_{Q_{\nu}}W^{\infty}(x,A,\nabla u_n(y)) \, dy + \int_{S_{u_n}
\cap Q_{\nu}}\Psi_2(x,[u_n(y)],\nu_{u_n}(y)) \, d \cH^{N-1}(y) \\
& & \hspace{0,2cm} - \int_{Q_{\nu}}W^{\infty}(x_0,A,\nabla u_n(y)) \, dy -
\int_{S_{u_n} \cap Q_{\nu}}\Psi_2(x_0,[u_n(y)],\nu_{u_n}(y)) \, d \cH%
^{N-1}(y) + \frac{1}{n} \\
& &\leq \int_{Q_{\nu}}\varepsilon C |\nabla u_n(y)| \, dy + \int_{S_{u_n}
\cap Q_{\nu}}\varepsilon C |[u_n(y)]| \, d \cH^{N-1}(y) + \frac{1}{n} \\
& &\leq \varepsilon C (1 + |\Lambda|) + \frac{1}{n}.
\end{eqnarray*}
Letting $n \to + \infty$ we conclude that 
\begin{equation*}
\gamma_2(x,A,\Lambda, \nu) - \gamma_2(x_0,A,\Lambda, \nu) \leq \varepsilon C
(1 + |\Lambda|).
\end{equation*}
Changing the roles of $x$ and $x_0$ we obtain the result.

We now prove $iii)$. Let $u \in SBV(Q_{\nu};{\mathbb{R}}^{d\times N})$ be
such that $u|_{\partial Q_{\nu}} = \gamma_{(\Lambda_1,\nu)}$, $%
\displaystyle \int_{Q_{\nu}}\nabla u(y) \, dy = 0$ and 
\begin{equation*}
\int_{Q_{\nu}}W^{\infty}(x,A_1,\nabla u(y)) \, dy + \int_{S_{u} \cap
Q_{\nu}}\Psi_2(x,[u(y)],\nu_{u}(y)) \, d \cH^{N-1}(y) \leq
\gamma_2(x,A_1,\Lambda_1,\nu) + \varepsilon.
\end{equation*}
Let $v = \gamma_{(\Lambda_2,\nu)} - \gamma_{(\Lambda_1,\nu)}$ and define $w
= u + v$. Since $w$ is admissible for $\gamma_2(x,A_2,\Lambda_2,\nu)$ we
have by \eqref{h2infty}, (H8) and (H5), 
\begin{eqnarray*}
\gamma_2(x,A_2,\Lambda_2, \nu) &\leq& \int_{Q_{\nu}}W^{\infty}(x,A_2,\nabla
w(y)) \, dy + \int_{S_{w} \cap Q_{\nu}}\Psi_2(x,[w(y)],\nu_{w}(y)) \, d \cH%
^{N-1}(y) \\
&\leq& \int_{Q_{\nu}}W^{\infty}(x,A_1,\nabla u(y)) \, dy + \int_{S_{u} \cap
Q_{\nu}} \Psi_2(x,[u(y)],\nu_{u}(y))\, d \cH^{N-1}(y) \\
& & + \int_{S_{v} \cap Q_{\nu}} \Psi_2(x,[v(y)],\nu_{v}(y))\, d \cH^{N-1}(y)
\\
&\leq& \gamma_2(x,A_1,\Lambda_1, \nu) + \varepsilon + \int_{\{y \in Q_{\nu}
: y \cdot \nu = 0\}} \Psi_2(x,\Lambda_2-\Lambda_1,\nu) \, d \cH^{N-1}(y) \\
&\leq& \gamma_2(x,A_1,\Lambda_1, \nu) + \varepsilon + C |\Lambda_2 -
\Lambda_1|,
\end{eqnarray*}
so to prove the first inequality it suffices to let $\varepsilon \to 0^+$.
The other inequality is obtained in a similar fashion.

To prove $iv)$, due to the result of $iii)$ it suffices to show that $%
(x,\nu) \to \gamma_2(x,A,\Lambda,\nu)$ is upper semicontinuous, for every $%
A, \Lambda \in {\mathbb{R}}^{d\times N}$. By a change of variables argument,
choosing a rotation $R$ such that $Re_N = \nu$, it is easy to see that 
\begin{equation}\label{ReNbis}
\begin{split}
\gamma_2(x,A,\Lambda, \nu) =  \inf_{u \in SBV(Q; {%
\mathbb{R}}^{d\times N})} & \Bigg\{  \int_{Q}
W^{\infty}(x,A,\nabla u(y)R^T) \, dy \\
& + \int_{S_{u}\cap Q} \Psi_2(x,[u(y)],\nu_u(y)) \, d\mathcal{H}^{N-1}(y):  u|_{\partial Q} = \gamma_{(\Lambda, e_N)}, \\
& \phantom{+} \int_Q\nabla u(y) \, dy = 0, \, Re_N = \nu, \, R \in SO(N)\Bigg\}.
\end{split}
\end{equation}
Let $(x_n,\nu_n) \to (x,\nu)$. Given $\varepsilon > 0$, let $u_\varepsilon
\in SBV(Q; {\mathbb{R}}^{d\times N})$ be such that $u_\varepsilon|_{\partial Q} = \gamma_{(\Lambda, e_N)},$ $\displaystyle \int_Q \nabla
u_\varepsilon(y) \, dy = 0$ and 
\begin{equation}  \label{uepsbis}
\left|\gamma_2(x,A,\Lambda,\nu) - \int_Q W^{\infty}(x,A,\nabla
u_\varepsilon(y) R^T) \, dy - \int_{S_{u_\varepsilon}\cap
Q}\Psi_2(x,[u_\varepsilon(y)],\nu_{u_\varepsilon}(y)) \, d\mathcal{H}%
^{N-1}(y) \right| < \varepsilon.
\end{equation}
Let $K$ be a compact subset of $\O $ containing a neighborhood of $x$ and
choose $\delta > 0$ such that \eqref{h3infty} and (H6) are satisfied
uniformly in $K$, i.e. 
\begin{equation}  \label{unifK2}
y, y^{\prime }\in K, |y-y^{\prime}|<\delta\Rightarrow |W^{\infty}(y,A,M) - W^{\infty}(y^{\prime
},A,M)| \leq \varepsilon C |M|,
\end{equation}
for every $(A,M) \in {\mathbb{R}}^{d\times N} \times {\mathbb{R}}^{d\times
N\times N}$, and 
\begin{equation}  \label{unifK3}
y, y^{\prime }\in K, |y-y^{\prime }| < \delta \Rightarrow
|\Psi_2(y,\Lambda,\nu) - \Psi_2(y^{\prime },\Lambda,\nu)| \leq \varepsilon C
|\Lambda|,
\end{equation}
for all $(\Lambda,\nu) \in {\mathbb{R}}^{d\times N} \times S^{N-1}$.
Choosing rotations $R_n$ such that $R_ne_N = \nu_n$, $R_n \to R$, by %
\eqref{unifK2}, \eqref{unifK3}, \eqref{h2infty}, \eqref{h1infty}, (H5) and %
\eqref{uepsbis} we have that 
\begin{eqnarray*}
& & \left|\int_Q W^{\infty}(x,A,\nabla u_\varepsilon(y)R^T) \, dy +
\int_{S_{u_\varepsilon}\cap
Q}\Psi_2(x,[u_\varepsilon(y)],\nu_{u_\varepsilon}(y)) \, d\mathcal{H}%
^{N-1}(y) \right. \\
& & \left. \hspace{0,2cm} - \int_Q W^{\infty}(x_n,A,\nabla
u_\varepsilon(y)R_n^T) \, dy - \int_{S_{u_\varepsilon}\cap
Q}\Psi_2(x_n,[u_\varepsilon(y)],\nu_{u_\varepsilon}(y)) \, d\mathcal{H}%
^{N-1}(y) \right| \\
& & \leq \int_Q|W^{\infty}(x,A,\nabla u_\varepsilon(y)R^T) -
W^{\infty}(x_n,A,\nabla u_\varepsilon(y)R^T)| \, dy \\
& & \hspace{0,2cm} + \int_Q|W^{\infty}(x_n,A,\nabla u_\varepsilon(y)R^T) -
W^{\infty}(x_n,A,\nabla u_\varepsilon(y)R_n^T)| \, dy \\
\end{eqnarray*}
\begin{eqnarray*}
& & \hspace{0,2cm} + \int_{S_{u_\varepsilon}\cap Q}\varepsilon C
|[u_\varepsilon(y)]| \, d\mathcal{H}^{N-1}(y) \\
& & \leq \int_Q \varepsilon C |\nabla u_\varepsilon(y)R^T| \, dy + \int_Q C
|\nabla u_\varepsilon(y)| |R_n^T - R^T| \, dy + \int_{S_{u_\varepsilon}\cap
Q}\varepsilon C |[u_\varepsilon(y)]| \, d\mathcal{H}^{N-1}(y) \\
& & \leq \varepsilon C \int_Q W^{\infty}(x,A,\nabla u_\varepsilon(y)R^T) \,
dy + \varepsilon C \int_{S_{u_\varepsilon}\cap
Q}\Psi_2(x,[u_\varepsilon(y)],\nu_{u_\varepsilon}(y)) \, d\mathcal{H}%
^{N-1}(y) \\
& & \hspace{0,2cm} + |R_n^T - R^T| \int_Q W^{\infty}(x,A,\nabla
u_\varepsilon(y)R^T) \, dy \\
& & \leq \big(\varepsilon C + |R_n^T - R^T|\big)\big(\varepsilon +
\gamma_2(x,A,\Lambda,\nu)\big) = O(\varepsilon) + O\big(|R_n^T - R^T|\big).
\end{eqnarray*}
Thus, by \eqref{ReNbis} and \eqref{uepsbis}, 
\begin{eqnarray*}
\gamma_2(x_n,A,\Lambda,\nu_n) &\leq& \int_Q W^{\infty}(x_n,A,\nabla
u_\varepsilon(y)R_n^T) \, dy + \int_{S_{u_\varepsilon}\cap
Q}\Psi_2(x_n,[u_\varepsilon(y)],\nu_{u_\varepsilon}(y)) \, d\mathcal{H}%
^{N-1}(y) \\
&\leq& \int_Q W^{\infty}(x,A,\nabla u_\varepsilon(y)R^T) \, dy +
\int_{S_{u_\varepsilon}\cap
Q}\Psi_2(x,[u_\varepsilon(y)],\nu_{u_\varepsilon}(y)) \, d\mathcal{H}%
^{N-1}(y) \\
& & + O(\varepsilon) + O(|R_n^T - R^T|) \\
&\leq& O(\varepsilon) + O\big(|R_n^T - R^T|\big) + \gamma_2(x,A,\Lambda,\nu).
\end{eqnarray*}
Therefore, letting $\varepsilon \to 0^+$, and passing to the limit as $n\to
+\infty$, since $R_n \to R$, we conclude that 
\begin{equation*}
\limsup_{n\to +\infty}\gamma_2(x_n,A,\Lambda,\nu_n) \leq
\gamma_2(x,A,\Lambda,\nu).\qedhere
\end{equation*}
\end{proof}

\begin{remark}\label{gamma2}
{\rm
$\gamma_2(x,A,\Lambda,\nu)$ can be extended to $\O \times \Rb^{d\times N} \times \Rb^{d\times N} \times \Rb^N$ as a
positively homogeneous of degree one function in the fourth variable in the following way
$$\gamma_2(x,A,\Lambda,\theta) = 
\begin{cases}
|\theta|\gamma_2\big(x,A,\Lambda,\frac{\theta}{|\theta|}\big), & \mbox{if} \; \theta \in \Rb^N \setminus\{0\}\\
0, & \mbox{if} \; \theta = 0.
\end{cases}$$
By Proposition \ref{propgamma2} this extension is upper semicontinuous in 
$\O \times \Rb^{d\times N} \times \Rb^{d\times N} \times \Rb^N$
and satisfies 
$$\gamma_2(x,A,\Lambda,\theta) \leq C |\Lambda| |\theta|, \forall (x,A,\Lambda,\theta) \in 
\O \times \Rb^{d\times N} \times \Rb^{d\times N} \times \Rb^N.$$
Thus there exists a non-increasing sequence of continuous functions $\gamma_2^m : \O \times \Rb^N \to [0,+\infty)$
such that 
$$ \gamma_2(x,A,\Lambda,\theta) = \inf_{m}\gamma_2^m(x,\theta) = \lim_m \gamma_2^m(x,\theta) \leq C |\theta|,
\forall (x,\theta) \in \O \times \Rb^N.$$
}
\end{remark}

\section{Integral representation of $I(g, G, \Gamma)$}\label{sect:proofmain}

The proof of the integral representation of $I$ follows along the lines of
the proofs in \cite{CF} (for $I_2$) and in \cite{BMS} (for $I_1$), together
with arguments in \cite{BBBF} in order to deal with the explicit dependence
on the position variable $x$. In what follows, we mostly restrict our attention to
the integral representation of $I_1$ since that of $I_2$ can be derived in a
similar manner.

\subsection{Integral representation of $I_1(g, G, \Gamma)$}

In this section we will prove the following result.

\begin{theorem}
For all $(g, G, \Gamma) \in SD^2(\Omega;\R{d})$, under hypotheses (H1)--(H8), we
have that 
\begin{equation*}
I_1(g, G, \Gamma) = \int_\Omega W_1(x, G(x) - \nabla g(x))\, dx + \int_{S_g \cap
\Omega} \gamma_1(x, [g(x)], \nu_g(x))\, d\cH^{N-1}(x).
\end{equation*}
\end{theorem}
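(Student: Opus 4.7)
The plan is to prove matching lower and upper bounds, capitalizing on the fact that $I_1(g,G,\Gamma,\cdot)\lfloor\mathcal{O}(\Omega)$ is, by Theorem \ref{radon}, a Radon measure absolutely continuous with respect to $\mathcal{L}^N + \mathcal{H}^{N-1}\lfloor S_g$. Writing $\mu := I_1(g,G,\Gamma,\cdot)$ and invoking the Radon-Nikodym theorem, one has $\mu = \mu^a\mathcal{L}^N + \mu^j\mathcal{H}^{N-1}\lfloor S_g$, so the theorem reduces to the identifications
\begin{equation*}
\mu^a(x_0) = W_1(x_0, G(x_0)-\nabla g(x_0)) \quad \mathcal{L}^N\text{-a.e.}, \qquad \mu^j(x_0) = \gamma_1(x_0,[g(x_0)],\nu_g(x_0)) \quad \mathcal{H}^{N-1}\text{-a.e.\ on } S_g.
\end{equation*}

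For the lower bound I would use a blow-up argument. At a Lebesgue point $x_0$ of $\nabla g$ and $G$, and for any near-optimal sequence $\{u_n\}$ for $I_1(g,G,\Gamma,Q(x_0,r))$, the rescaling
\begin{equation*}
v_{n,r}(y) := \tfrac{1}{r}\bigl(u_n(x_0 + ry) - g(x_0) - r\nabla g(x_0)\,y\bigr)
\end{equation*}
combined with the homogeneity (H7) and the continuity (H6) yields, after diagonalization in $n,r$, a sequence $v_k\in SBV^2(Q;\mathbb{R}^d)$ with $v_k\to 0$ in $L^1$ and $\nabla v_k\to G(x_0)-\nabla g(x_0)$ in $L^1$. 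A cut-off near $\partial Q$, together with Lemma \ref{matias-lemma} to reset both the boundary trace to zero and the gradient to $G(x_0)-\nabla g(x_0)$, makes the $v_k$ admissible for $W_1(x_0,G(x_0)-\nabla g(x_0))$, giving $\mu^a(x_0)\geq W_1(x_0,G(x_0)-\nabla g(x_0))$. At $\mathcal{H}^{N-1}$-a.e.\ $x_0\in S_g$ with normal $\nu:=\nu_g(x_0)$, the blow-up $v_{n,r}(y):=u_n(x_0+ry)$ has $\nabla v_{n,r}\to 0$ in $L^1(Q_\nu;\mathbb{R}^{d\times N})$ (the gradient picks up a factor $r$) and, after correcting the boundary trace to $\gamma_{([g(x_0)],\nu)}$, provides competitors for $\gamma_1(x_0,[g(x_0)],\nu)$.

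For the upper bound I would, using Proposition \ref{lsc} and Lemma \ref{ctap}, reduce to the case where $g$ is piecewise constant on a polyhedral partition, with a background sequence $\{u_n\}$ delivering $\nabla u_n\to G$ in $L^1$ and $\nabla^2 u_n\weakst\Gamma$ furnished by Lemma \ref{nonempty}. On each polyhedral face of $S_g$ one pastes in $\delta$-scaled near-minimizers of $\gamma_1(x_i,[g(x_i)],\nu_i)$ supported in thin tubes around the face, using Theorem \ref{BaldoL3.1} to approximate $S_g$ by such polyhedra; continuity in the position variable through Proposition \ref{propgamma1}(ii), together with the upper semicontinuous envelope of Remark \ref{gamma1}, handles the freezing of $x_i$. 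For the bulk part, one partitions $\Omega$ into small cubes $Q(x_i,\delta)$ on which $G-\nabla g$ is nearly constant and inserts $\delta$-scaled near-minimizers of $W_1(x_i,G(x_i)-\nabla g(x_i))$ vanishing on $\partial Q(x_i,\delta)$. Both families of perturbations have piecewise constant gradients with vanishing boundary traces (via Lemma \ref{matias-lemma}), so they preserve the convergences of $\nabla u_n$ in $L^1$ and of $\nabla^2 u_n$ in the weak-$\ast$ sense.

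The principal obstacle is realizing all three convergences $u_n\to g$, $\nabla u_n\to G$, and $\nabla^2 u_n\weakst\Gamma$ simultaneously while driving the jump energy to the target value. Since $I_1$ depends only on $[u_n]$ (and not on $[\nabla u_n]$), the strategy is to treat $G$ and $\Gamma$ as ``rigid'' data supplied by Alberti's Theorem \ref{Al} and then to superpose perturbations that carry a prescribed singular part in $Du_n$ while contributing zero additional $\nabla u_n$ and $\nabla^2 u_n$; this is precisely the role of Lemma \ref{matias-lemma} combined with Lemma \ref{ctap}. A final diagonalization, controlled by the lower-semicontinuity of Proposition \ref{lsc} and by Reshetnyak's Theorem \ref{Reshetnyak} applied to the surface densities through their upper semicontinuous envelopes (Remark \ref{gamma1}), then closes the argument.
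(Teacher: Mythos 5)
Your proposal follows essentially the same strategy as the paper: a blow-up argument for the lower bound using the Radon-measure structure from Theorem \ref{radon}, Alberti's Theorem \ref{Al} and Lemma \ref{matias-lemma} to correct boundary traces and gradients, and an upper bound built by pasting rescaled periodic near-minimizers (with Lemma \ref{ctap} and Alberti correctors to preserve the $\nabla u_n\to G$ and $\nabla^2 u_n\weakst\Gamma$ constraints), reduction to $g$ piecewise constant, polyhedral approximation via Theorem \ref{BaldoL3.1}, Reshetnyak's Theorem \ref{Reshetnyak}, and the upper semicontinuous envelope of Remark \ref{gamma1}. The only minor inaccuracy is that in the bulk lower bound you cannot reset a non-constant gradient $\nabla v_k$ to the constant $G(x_0)-\nabla g(x_0)$ using Lemma \ref{matias-lemma} alone (which only produces constant gradients) — the paper invokes Alberti's Theorem \ref{Al} there — but this does not affect the soundness of the overall plan.
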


\subsubsection{The lower bound inequality}

We begin by obtaining a lower bound for $I_1(g, G, \Gamma)$.

\begin{proposition}
For all $(g, G, \Gamma) \in SD^2(\Omega;\R{d})$, under hypotheses (H1) - (H8), we
have that 
\begin{equation*}
I_1(g, G, \Gamma) \geq \int_\Omega W_1(x, G(x) - \nabla g(x))\, dx + \int_{S_g
\cap \Omega} \gamma_1(x, [g(x)], \nu_g(x))\, d\cH^{N-1}(x).
\end{equation*}
\end{proposition}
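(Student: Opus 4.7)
My plan is to prove the two pointwise lower bounds on the Radon--Nikodym derivatives of $I_{1}(g,G,\Gamma,\cdot)$ via a blow-up argument in the spirit of Fonseca--M\"uller. First I would invoke Theorem \ref{radon} to view $U\mapsto I_{1}(g,G,\Gamma,U)$ as (the restriction to $\mathcal{O}(\Omega)$ of) a Radon measure that is absolutely continuous with respect to $\mu:=\mathcal{L}^{N}+\mathcal{H}^{N-1}\lfloor S_{g}$. By the Besicovitch differentiation theorem, it will then suffice to prove the two pointwise estimates
\begin{equation*}
\liminf_{r\to 0^{+}}\frac{I_{1}(g,G,\Gamma,Q(x_{0},r))}{r^{N}}\ge W_{1}\bigl(x_{0},G(x_{0})-\nabla g(x_{0})\bigr)
\end{equation*}
for $\mathcal{L}^{N}$-a.e.\ $x_{0}\in\Omega\setminus S_{g}$, and
\begin{equation*}
\liminf_{r\to 0^{+}}\frac{I_{1}(g,G,\Gamma,Q_{\nu_{g}(x_{0})}(x_{0},r))}{r^{N-1}}\ge \gamma_{1}\bigl(x_{0},[g(x_{0})],\nu_{g}(x_{0})\bigr)
\end{equation*}
for $\mathcal{H}^{N-1}$-a.e.\ $x_{0}\in S_{g}$.

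For the bulk estimate, I would fix a Lebesgue point $x_{0}\notin S_{g}$ of $\nabla g$ and $G$ at which the first limit above exists, pick a near-minimizer $\{u_{n}^{r}\}$ of $I_{1}(g,G,\Gamma,Q(x_{0},r))$ for each small $r$, and rescale by
\begin{equation*}
v_{n,r}(y)\igual\frac{u_{n}^{r}(x_{0}+ry)-g(x_{0})-r\nabla g(x_{0})\cdot y}{r},\qquad y\in Q.
\end{equation*}
A change of variables combined with (H7) (degree-one homogeneity of $\Psi_{1}$) yields
\begin{equation*}
\frac{1}{r^{N}}\int_{S_{u_{n}^{r}}\cap Q(x_{0},r)}\Psi_{1}(x,[u_{n}^{r}],\nu_{u_{n}^{r}})\,d\mathcal{H}^{N-1}=\int_{S_{v_{n,r}}\cap Q}\Psi_{1}(x_{0}+ry,[v_{n,r}],\nu_{v_{n,r}})\,d\mathcal{H}^{N-1},
\end{equation*}
and (H6) lets me replace $x_{0}+ry$ by $x_{0}$ at cost $o(1)$ via (H5). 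Passing $n\to\infty$ first and then $r\to 0^{+}$ along a diagonal $w_{k}:=v_{n_{k},r_{k}}$, approximate differentiability of $g$ at $x_{0}$ gives $w_{k}\to 0$ in $L^{1}(Q;\mathbb{R}^{d})$ and $\nabla w_{k}\to G(x_{0})-\nabla g(x_{0})$ in $L^{1}(Q;\mathbb{R}^{d\times N})$. A boundary cut-off on a thin layer forces $w_{k}|_{\partial Q}=0$, and Lemma \ref{matias-lemma} provides an $SBV$ correction to enforce $\nabla w_{k}=G(x_{0})-\nabla g(x_{0})$ a.e.\ in $Q$, both at cost $o(1)$ by (H5) and (H8). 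The corrected sequence is then admissible in the definition of $W_{1}(x_{0},G(x_{0})-\nabla g(x_{0}))$ and the lower bound follows.

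For the surface estimate, I would fix a jump point $x_{0}\in S_{g}$ with normal $\nu:=\nu_{g}(x_{0})$ at which the Radon--Nikodym limit exists and the usual blow-up property of $g$ holds, take near-minimizers on $Q_{\nu}(x_{0},r)$, and rescale by $v_{n,r}(y):=u_{n}^{r}(x_{0}+ry)-g^{-}(x_{0})$ on $Q_{\nu}$. Then $\nabla v_{n,r}(y)=r\nabla u_{n}^{r}(x_{0}+ry)$ vanishes in $L^{1}(Q_{\nu};\mathbb{R}^{d\times N})$ after the double limit, while $v_{n,r}\to\gamma_{([g(x_{0})],\nu)}$ in $L^{1}(Q_{\nu};\mathbb{R}^{d})$; the analogous change of variables plus (H6)-freezing at $x_{0}$ transforms the rescaled jump integral into one with the correct integrand $\Psi_{1}(x_{0},\cdot,\cdot)$. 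A diagonal selection and boundary and gradient corrections as above produce a competitor for $\gamma_{1}(x_{0},[g(x_{0})],\nu)$, completing the proof. I expect the principal technical difficulty to be carrying out the two corrections (boundary trace and pointwise gradient) simultaneously while keeping the jump-integral cost $o(1)$: it is exactly here that Lemma \ref{matias-lemma}, combined with (H5), (H7), and (H8), must be deployed carefully, and the frozen-coefficient reduction based on (H6) is what forces the argument to proceed via near-minimizers at each scale and a diagonal extraction rather than a single minimizing sequence.
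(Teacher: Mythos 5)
Your overall strategy is the same blow-up argument the paper uses, with one organizational variant: you propose to differentiate the Radon measure $I_{1}(g,G,\Gamma,\cdot)$ directly (via Theorem~\ref{radon} and Besicovitch), choosing fresh near-minimizers on each cube $Q(x_{0},r)$ and diagonalizing, whereas the paper fixes one admissible sequence $\{u_{n}\}$ on all of $\Omega$, passes to the weak-$*$ limit $\mu$ of the associated surface-energy measures $\mu_{n}$, decomposes $\mu$ by Radon--Nikod\'ym, and blows up $\mu$. Both routes lead to the same pointwise estimates and, once Theorem~\ref{radon} is in hand, are interchangeable; the change-of-variables combined with (H7), the frozen-coefficient reduction via (H6)/(H5), and the diagonal extraction all line up with the paper's computation.

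There is, however, a genuine gap in the correction step. You invoke Lemma~\ref{matias-lemma} to ``enforce $\nabla w_{k}=G(x_{0})-\nabla g(x_{0})$ a.e.\ in $Q$.'' But Lemma~\ref{matias-lemma} only produces an $SBV$ function with a prescribed \emph{constant} gradient $A$ and zero boundary trace; it does not repair a non-constant gradient discrepancy. After your rescaling, $\nabla w_{k}(y)=\nabla u_{n_{k}}^{r_{k}}(x_{0}+r_{k}y)-\nabla g(x_{0})$, so the deviation $\nabla w_{k}-\bigl(G(x_{0})-\nabla g(x_{0})\bigr)$ is a general $L^{1}$ function that is small in $L^{1}$ but not constant. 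To absorb it, you must use Alberti's Lusin-type theorem for gradients (Theorem~\ref{Al}), which allows you to construct $\rho_{k}\in SBV(Q;{\mathbb{R}}^{d})$ with $\nabla\rho_{k}=\bigl(G(x_{0})-\nabla g(x_{0})\bigr)-\nabla w_{k}$ and $\|D^{s}\rho_{k}\|(Q)\le C\|\nabla\rho_{k}\|_{L^{1}}\to 0$; (H5) and (H8) then show this correction costs $o(1)$ in the jump energy. Lemma~\ref{matias-lemma} is still needed, but only to fill the thin annulus $Q\setminus Q(0,r_{k})$ with a function that has the constant gradient $G(x_{0})-\nabla g(x_{0})$ and vanishing trace on $\partial\bigl(Q\setminus Q(0,r_{k})\bigr)$. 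Without Theorem~\ref{Al} the competitor you build is not admissible for $W_{1}(x_{0},G(x_{0})-\nabla g(x_{0}))$, and the same omission occurs in your surface-term construction (where Theorem~\ref{Al} is used to kill $\nabla w_{n,k}$ before the trace cut-off). Add that tool and your plan goes through exactly as in the paper.
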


\begin{proof}
Let $\{u_n\} \subset SBV^2(\O ;{\mathbb{R}}^d)$ be an admissible sequence
for $I_1(g, G, \Gamma)$ such that 
\begin{equation*}
\lim_{n \to +\infty} \int_{S_{u_n} \cap \Omega} \Psi_1(
x,[u_n](x),\nu_{u_n}(x))\, d\cH^{N-1}(x) < + \infty.
\end{equation*}
For each Borel set $B \subset \overline{\Omega}$ define the sequence of
Radon measures $\{\mu_n\}$ by 
\begin{equation*}
\mu_n(B) := \int_{S_{u_n} \cap B} \Psi_1(x,[u_n](x),\nu_{u_n}(x))\, d\cH%
^{N-1}(x).
\end{equation*}
By the choice of $u_n$, the sequence $\{ \mu_n\}$ is bounded so there exists 
$\mu \in \cM^+ (\Omega)$ such that, up to a subsequence (not relabeled), $%
\mu_n \overset{\ast}{\rightharpoonup} \mu$ in the sense of measures. By the
Radon-Nikodym theorem we may decompose $\mu$ as the sum of three mutually
singular non-negative measures 
\begin{equation*}
\mu = \mu_a \cL^N + \mu_j \cH^{N-1}\lfloor{S_g} + \mu_s.
\end{equation*}
Using the blow-up method it suffices to show that, for $\cL^N$ a.e. $x_0 \in 
\O $, 
\begin{equation}  \label{mua}
\mu_a(x_0) = \frac{d\mu}{d\cL^N}(x_0) \geq W_1( x_0, G(x_0) - \nabla g(x_0)),
\end{equation}
and, for $\cH^{N-1}$ a.e. $x_0 \in S_g \cap \O $, 
\begin{equation}  \label{muj}
\mu_j(x_0) = \frac{d\mu}{d\cH^{N-1}\lfloor{S_g}}(x_0) \geq
\gamma_1(x_0,[g(x_0)],\nu_g(x_0)).
\end{equation}
Assuming \eqref{mua} and \eqref{muj} hold, we then obtain 
\begin{eqnarray*}
& & \lim_{n \to +\infty} \int_{S_{u_n} \cap \Omega} \Psi_1(
x,[u_n](x),\nu_{u_n}(x))\, d\cH^{N-1}(x) \\
& & \hspace{1cm} \geq \int_\O \mu_a(x) \, dx + \int_{S_g \cap \O }\mu_j(x)
\, d\cH^{N-1}(x) \\
& & \hspace{1cm} \geq \int_\Omega W_1(x, G(x) - \nabla g(x))\, dx +
\int_{S_g \cap \Omega} \gamma_1(x, [g(x)], \nu_g(x))\, d\cH^{N-1}(x),
\end{eqnarray*}
and the result follows by taking the infimum over all sequences $\{u_n\}$
satisfying the above properties.
\end{proof}

The remainder of this section will be devoted to the proofs of inequalities %
\eqref{mua} and \eqref{muj}.

\begin{proposition}
\label{lowerbulk} For $\cL^N$ a.e. $x_0 \in \Omega$ the following inequality
holds, 
\begin{equation*}
\frac{d\mu}{d\cL^N}(x_0) \geq W_1(x_0, G(x_0) - \nabla g(x_0)).
\end{equation*}
\end{proposition}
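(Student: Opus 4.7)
The plan is a standard blow-up at a Lebesgue-type point of the limit measure $\mu$: rescale the approximating sequence $\{u_n\}$ near $x_0$ to produce a test function for the cell problem defining $W_1(x_0, G(x_0) - \nabla g(x_0))$, and then perform a two-step correction to enforce admissibility.

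\textbf{Good point selection.} For $\cL^N$-a.e.\ $x_0 \in \Omega$ we may assume simultaneously: $x_0 \notin S_g$; $x_0$ is a Lebesgue point of $G$ and of $\nabla g$; $g$ is approximately differentiable at $x_0$ with approximate gradient $\nabla g(x_0)$; the Radon--Nikod\'ym derivative $\frac{d\mu}{d\cL^N}(x_0)$ exists and equals $\lim_{\e\to 0^+}\mu(Q(x_0,\e))/\e^N$; and the singular part of $\mu$ has zero density at $x_0$. Fix such an $x_0$, set $A := G(x_0) - \nabla g(x_0)$, and choose radii $\varepsilon_k \to 0^+$ with $Q(x_0,\varepsilon_k)\subset\subset\Omega$ and $\mu(\partial Q(x_0,\varepsilon_k)) = 0$. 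The weak-$*$ convergence $\mu_n \weakst \mu$ then yields
$$\frac{d\mu}{d\cL^N}(x_0) = \lim_{k\to\infty}\lim_{n\to\infty}\frac{\mu_n(Q(x_0,\varepsilon_k))}{\varepsilon_k^N}.$$

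\textbf{Rescaling and diagonal extraction.} Set
$$v_{n,k}(y) := \frac{u_n(x_0 + \varepsilon_k y) - g(x_0) - \varepsilon_k \nabla g(x_0)\, y}{\varepsilon_k},\quad y\in Q,$$
so $\nabla v_{n,k}(y) = \nabla u_n(x_0 + \varepsilon_k y) - \nabla g(x_0)$; by (H7) and a change of variables,
$$\int_{S_{v_{n,k}}\cap Q}\Psi_1\bigl(x_0, [v_{n,k}], \nu_{v_{n,k}}\bigr)\,d\cH^{N-1} = \frac{1}{\varepsilon_k^N}\int_{S_{u_n}\cap Q(x_0,\varepsilon_k)}\Psi_1\bigl(x_0, [u_n], \nu_{u_n}\bigr)\,d\cH^{N-1}.$$
Hypothesis (H6) together with the lower bound in (H5) lets us replace $\Psi_1(x_0,\cdot)$ by $\Psi_1(x,\cdot)$ at a multiplicative error $1+\omega(\varepsilon_k)$, $\omega(\varepsilon_k)\to 0$, so the right-hand side is bounded by $(1+\omega(\varepsilon_k))\mu_n(Q(x_0,\varepsilon_k))/\varepsilon_k^N$. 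Sending $n\to\infty$ for fixed $k$ (so $v_{n,k}\to (g(x_0+\varepsilon_k\cdot)-g(x_0))/\varepsilon_k-\nabla g(x_0)\, y$ in $L^1(Q;\R{d})$ and $\nabla v_{n,k}\to G(x_0+\varepsilon_k\cdot)-\nabla g(x_0)$ in $L^1(Q;\R{d\times N})$) and then $k\to\infty$ (using Lebesgue-point properties of $G,\nabla g$ and approximate differentiability of $g$), a diagonal extraction produces $v_k := v_{n_k,k}$ with
\begin{equation*}
v_k \to 0 \text{ in } L^1(Q;\R{d}), \quad \nabla v_k \to A \text{ in } L^1(Q;\R{d\times N}),
\end{equation*}
\begin{equation*}
\liminf_{k\to\infty}\int_{S_{v_k}\cap Q}\Psi_1\bigl(x_0,[v_k],\nu_{v_k}\bigr)\,d\cH^{N-1} \leq \frac{d\mu}{d\cL^N}(x_0).
\end{equation*}

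\textbf{Admissibility modification (main obstacle).} The principal difficulty is that $v_k$ is not admissible for the cell problem defining $W_1(x_0,A)$: neither $\nabla v_k = A$ exactly nor $v_k|_{\partial Q} = 0$. For the gradient, apply Theorem \ref{Al} to $f_k := A - \nabla v_k$ to obtain $\eta_k \in SBV(Q;\R{d})$ with $\nabla\eta_k = f_k$ and $\|\eta_k\|_{L^1(Q)} + \|D^s\eta_k\|(Q) \leq C\|f_k\|_{L^1(Q)} \to 0$. Then $\tilde v_k := v_k + \eta_k \in SBV^2(Q;\R{d})$ still tends to $0$ in $L^1$, satisfies $\nabla\tilde v_k = A$ a.e., and by (H5) and (H8) has surface energy exceeding that of $v_k$ by at most $K_1\|D^s\eta_k\|(Q)=o(1)$. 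To install the boundary condition, use Lemma \ref{matias-lemma} on the shell $Q\setminus\overline{Q_{1-\delta_k}}$ to obtain $\phi_k$ vanishing on $\partial Q \cup \partial Q_{1-\delta_k}$ with $\nabla\phi_k = A$ a.e.\ and $\|D^s\phi_k\| \leq C|A|\delta_k$; define $u_k := \tilde v_k$ on $Q_{1-\delta_k}$ and $u_k := \phi_k$ on $Q\setminus\overline{Q_{1-\delta_k}}$. Then $u_k$ is admissible for $W_1(x_0,A)$ and its surface energy exceeds that of $v_k$ by at most the $o(1)$ perturbation from $\eta_k$, the $O(|A|\delta_k)$ contribution from the singular part of $\phi_k$, and a jump on $\partial Q_{1-\delta_k}$ whose $\Psi_1$-energy is bounded via (H5) by $C\int_{\partial Q_{1-\delta_k}}|\tilde v_k|\,d\cH^{N-1}$. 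A Fubini argument on $\delta\in(0,a_k)$, with $a_k\to 0$ chosen so that $\|\tilde v_k\|_{L^1(Q)}/a_k\to 0$, lets us select $\delta_k\in(0,a_k)$ with both errors tending to $0$. Passing to the liminf gives
$$W_1(x_0,A) \leq \liminf_{k\to\infty}\int_{S_{u_k}\cap Q}\Psi_1\bigl(x_0,[u_k],\nu_{u_k}\bigr)\,d\cH^{N-1} \leq \frac{d\mu}{d\cL^N}(x_0),$$
which is the desired inequality.
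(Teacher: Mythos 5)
Your proof is correct and follows essentially the same blow-up strategy as the paper's: rescale to the unit cube, freeze the $x$-dependence via (H7)/(H6), use Alberti's theorem (Theorem~\ref{Al}) to force the gradient to equal $A = G(x_0)-\nabla g(x_0)$, and use Lemma~\ref{matias-lemma} on a thin shell to enforce the zero boundary condition. The only differences are cosmetic: you subtract the affine part $\nabla g(x_0)\,y$ directly in the rescaled definition rather than as a separate step, collapse the double index $(n,k)$ by a diagonal extraction instead of keeping the double limit, and select the gluing radius by a Fubini slicing argument rather than by continuity of the trace operator in the intermediate topology.
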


\begin{proof}
Let $x_0 \in \O $ be a point of approximate differentiability of $g$ and of
approximate continuity of $G$. Moreover, $x_0$ is chosen so that $%
\displaystyle \frac{d\mu}{d\cL^N}(x_0)$ exists and is finite. Let $%
\{\delta_k \}$ be a sequence of positive real numbers such that $\delta_k
\to 0^+$ and $\mu(\partial Q(x_0, \delta_k))= 0.$ Therefore, 
\begin{equation*}
\lim_{n \to +\infty} \mu_n(Q(x_0,\delta_k)) = \mu(Q(x_0,\delta_k)),
\end{equation*}
and so 
\begin{equation}\label{unifbd}
\begin{split}
\frac{d\mu}{d\cL^N}(x_0) = & \lim_{k \to +\infty} \frac{\mu(Q(x_0,
\delta_k))}{\cL^N(Q(x_0, \delta_k))} \\
= & \lim_{k,n} \frac{1}{\delta_k^N}\int_{S_{u_n}\cap Q(x_0, \delta_k)}
\Psi_1(x,[u_n(x)],\nu_{u_n}(x))\, d\cH^{N-1}(x)  \\
= & \lim_{k,n} \frac{1}{\delta_k} \int_{Q \cap \{y:\, x_0 + \delta_k y \in
S_{u_n}\}} \Psi_1(x_0 + \delta_k y,[u_n(x_0 + \delta_k y)], \nu_{u_n}(x_0 +
\delta_k y))\, d\cH^{N-1}(y).
\end{split}
\end{equation}
For $y \in Q$ define 
\begin{equation*}
v_{n,k}(y) := \frac{u_n(x_0 + \delta_k y) - g(x_0)}{\delta_k} \; \; %
\mbox{and} \; \; v_0(y) := \nabla g(x_0)y.
\end{equation*}
Notice that, as $x_0$ is a point of approximate differentiability of $g$ and
of approximate continuity of $G$, 
\begin{equation}  \label{convL1}
v_{n,k} \mathrel{}\mathop{\longrightarrow}\limits^{L^1}_{k,n \to +\infty}
v_0 \; \; \mbox{and} \; \; \nabla v_{k,n} \mathrel{}\mathop{\longrightarrow}%
\limits^{L^1}_{k,n \to +\infty} G(x_0).
\end{equation}
Then, by (H7), (H6) and for $k$ large enough, we have 
\begin{equation}\label{vnk}
\begin{split}
\frac{d\mu}{d\cL^N}(x_0) =&\lim_{k,n} \int_{Q \cap S_{v_{n,k}}} \Psi_1(x_0
+ \delta_k y, [v_{n,k}(y)], \nu_{v_{n,k}}(y))\, d\cH^{N-1}(y)   \\
\geq& \lim_{k,n} \int_{Q \cap S_{v_{n,k}}} \Psi_1(x_0, [v_{n,k}(y)],
\nu_{v_{n,k}}(y))\, d\cH^{N-1}(y) - \eps C|D^sv_{n,k}|(Q)   \\
\geq& \lim_{k,n} \int_{Q \cap S_{v_{n,k}}} \Psi_1(x_0, [v_{n,k}(y)],
\nu_{v_{n,k}}(y))\, d\cH^{N-1}(y) + O(\varepsilon),
\end{split}
\end{equation}
where we have also used (H5) and \eqref{unifbd}. We must now modify $%
\{v_{n,k}\}$ in order to obtain a new sequence which is zero on the boundary
of $Q$ and whose gradient equals $G(x_0) - \nabla g(x_0)$. For $y \in Q$,
define $w_{n,k}(y) := v_{n,k}(y) - v_0(y).$ Since $w_{n,k} \mathrel{}%
\mathop{\longrightarrow}\limits^{L^1}_{k,n \to +\infty} 0$, we may choose $%
r_{n,k} \in ]0,1[$ such that $r_{n,k}\mathrel{}\mathop{\longrightarrow}%
\limits^{}_{k,n \to +\infty} 1$ and 
\begin{equation*}
\lim_{k,n}\int_{\partial Q(0,r_{n,k})}|w_{n,k}(y)| \, d\cH^{N-1}(y) = 0.
\end{equation*}
By Theorem~\ref{Al}, let $\rho_{n,k} \in SBV(Q;{\mathbb{R}}^d)$ be such that 
$\nabla \rho_{n,k}(y) = G(x_0) - \nabla v_{n,k}(y)$, 
\begin{equation*}
\|D^s \rho_{n,k}\|(Q(0,r_{n,k})) \leq C \|G(x_0) - \nabla v_{n,k}\|_{L^1},
\end{equation*}
and define $z_{n,k} := w_{n,k} + \rho_{n,k}$ for $y \in Q(0,r_{n,k})$.
Notice that $\nabla z_{n,k}(y) = G(x_0) - \nabla g(x_0)$. Also, by %
\eqref{convL1}, $\nabla \rho_{n,k} \mathrel{}\mathop{\longrightarrow}%
\limits^{L^1}_{k,n \to +\infty} 0$, so $\|D^s \rho_{n,k}\|(Q(0,r_{n,k})) \to
0$. Thus, by the continuity of the trace operator with respect to the
intermediate topology it follows that 
\begin{equation*}
\lim_{k,n}\int_{\partial Q(0,r_{n,k})}|\rho_{n,k}(y)| \, d\cH^{N-1}(y) = 0.
\end{equation*}
We now apply Lemma~\ref{matias-lemma} in order to obtain a sequence $%
\{\eta_{n,k}\} \subset SBV(Q \setminus Q(0,r_{n,k});{\mathbb{R}}^d)$ such
that $\nabla \eta_{n,k}(y) = G(x_0) - \nabla g(x_0)$, for $\cL^N$ a.e. $y
\in Q \setminus Q(0,r_{n,k})$, $\eta_{n,k} = 0$ on $\partial(Q \setminus
Q(0,r_{n,k}))$ and $\|D^s \eta_{n,k}\|(Q \setminus Q(0,r_{n,k})) \leq C |Q
\setminus Q(0,r_{n,k})|$. Then the sequence 
\begin{equation*}
\tilde{z}_{n,k}(y) := \begin{cases}
z_{n,k}(y), & \; \mbox{if} \; y \in Q(0,r_{n,k}) \\ 
\eta_{n,k}(y), & \; \mbox{if} \; y \in Q \setminus Q(0,r_{n,k})%
\end{cases}
\end{equation*}
is admissible for $W_1(x,G(x_0) - \nabla g(x_0))$ and satisfies, by (H5) and
(H8), 
\begin{eqnarray*}
& & \int_{Q \cap S_{\tilde{z}_{n,k}}} \Psi_1(x_0, [\tilde{z}_{n,k}(y)], \nu_{%
\tilde{z}_{n,k}}(y))\, d\cH^{N-1}(y) \\
& & \leq \int_{Q(0,r_{n,k}) \cap S_{{w}_{n,k}}} \Psi_1(x_0, [{w}_{n,k}(y)],
\nu_{{w}_{n,k}}(y))\, d\cH^{N-1}(y) \\
& & + \int_{Q(0,r_{n,k}) \cap S_{{\rho}_{n,k}}} \Psi_1(x_0, [{\rho}%
_{n,k}(y)], \nu_{{\rho}_{n,k}}(y))\, d\cH^{N-1}(y) \\
& & + \, C \int_{\partial Q(0,r_{n,k})}|z_{n,k}(y)| \, d\cH^{N-1}(y) + \, C
\int_{[Q \setminus Q(0,r_{n,k})] \cap S_{{\eta}_{n,k}}}|[\eta_{n,k}(y)]| \, d%
\cH^{N-1}(y)
\end{eqnarray*}
\begin{eqnarray*}
& & \leq \int_{Q \cap S_{{v}_{n,k}}} \Psi_1(x_0, [{v}_{n,k}(y)], \nu_{{v}%
_{n,k}}(y))\, d\cH^{N-1}(y) \\
& & + \, C \int_{Q(0,r_{n,k}) \cap S_{{\rho}_{n,k}}}|[\rho_{n,k}(y)]| \, d\cH%
^{N-1}(y) + \, C \int_{\partial Q(0,r_{n,k})}|w_{n,k}(y)| \, d\cH^{N-1}(y) \\
& & + \, C \int_{\partial Q(0,r_{n,k})}|\rho_{n,k}(y)| \, d\cH^{N-1}(y) + \,
C \int_{[Q \setminus Q(0,r_{n,k})] \cap S_{{\eta}_{n,k}}}|[\eta_{n,k}(y)]|
\, d\cH^{N-1}(y).
\end{eqnarray*}
Since the last four integrals in the above expression converge to zero as $%
k, n \to +\infty$ we conclude from \eqref{vnk} that 
\begin{eqnarray*}
\frac{d\mu}{d\cL^N}(x_0) & \geq& \liminf_{k,n}\int_{Q \cap S_{\tilde{z}%
_{n,k}}} \Psi_1(x_0, [\tilde{z}_{n,k}(y)], \nu_{\tilde{z}_{n,k}}(y))\, d\cH%
^{N-1}(y) +O(\varepsilon) \\
& \geq& W_1(x_0,G(x_0) - \nabla g(x_0)) + O(\varepsilon)
\end{eqnarray*}
so to conclude the result it suffices to let $\varepsilon \to 0^+$.
\end{proof}

We proceed with the proof of \eqref{muj}.

\begin{proposition}
\label{lowerinterfacial} For $\cH^{N-1}$ a.e. $x_0\in S_g \cap \O $ we have
that 
\begin{equation*}
\frac{d\mu}{d\cH^{N-1}\lfloor S_g}(x_0)\geq\gamma_1(x_0,[g(x_0)],\nu_g(x_0)).
\end{equation*}
\end{proposition}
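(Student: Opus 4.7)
The plan is to use the blow-up method at a typical jump point of $g$, following the same logic as Proposition \ref{lowerbulk} but rescaling suitably to detect the jump contribution.

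First, I would fix a point $x_0\in S_g\cap\O$ satisfying all of the following generic properties: (i) $\frac{d\mu}{d\cH^{N-1}\lfloor S_g}(x_0)$ exists and is finite; (ii) $x_0$ is a jump point of $g$ in the precise sense of the definition of $S_g$, with normal $\nu:=\nu_g(x_0)$ and traces $g^{\pm}(x_0)$; (iii) $x_0$ is a Lebesgue point of $G$; and (iv) the singular part $\mu_s$ has zero upper density with respect to $\cH^{N-1}\lfloor S_g$ at $x_0$. Then I would pick a sequence $\delta_k\to 0^+$ with $\mu(\partial Q_\nu(x_0,\delta_k))=0$, so that
\begin{equation*}
\frac{d\mu}{d\cH^{N-1}\lfloor S_g}(x_0)=\lim_k\frac{\mu(Q_\nu(x_0,\delta_k))}{\delta_k^{N-1}}=\lim_{k,n}\frac{1}{\delta_k^{N-1}}\int_{S_{u_n}\cap Q_\nu(x_0,\delta_k)}\Psi_1(x,[u_n(x)],\nu_{u_n}(x))\,d\cH^{N-1}(x).
\end{equation*}
Next, I would rescale by setting $v_{n,k}(y):=u_n(x_0+\delta_k y)-g^-(x_0)$ for $y\in Q_\nu$. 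A change of variables gives
\begin{equation*}
\frac{\mu_n(Q_\nu(x_0,\delta_k))}{\delta_k^{N-1}}=\int_{S_{v_{n,k}}\cap Q_\nu}\Psi_1(x_0+\delta_k y,[v_{n,k}(y)],\nu_{v_{n,k}}(y))\,d\cH^{N-1}(y).
\end{equation*}

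By the properties of $x_0$ as a jump point and $u_n\to g$ in $L^1$, a diagonal extraction yields $v_{n,k}\to\gamma_{([g(x_0)],\nu)}$ in $L^1(Q_\nu;\R{d})$. Since $\nabla v_{n,k}(y)=\delta_k\nabla u_n(x_0+\delta_k y)$ and $\nabla u_n\to G$ in $L^1$ with $x_0$ a Lebesgue point of $G$, we obtain $\|\nabla v_{n,k}\|_{L^1(Q_\nu)}=O(\delta_k|G(x_0)|)+o(1)\to 0$ along a suitable diagonal. Applying (H6) uniformly on the cube,
\begin{equation*}
\int_{S_{v_{n,k}}\cap Q_\nu}\Psi_1(x_0+\delta_k y,[v_{n,k}],\nu_{v_{n,k}})\,d\cH^{N-1}-\int_{S_{v_{n,k}}\cap Q_\nu}\Psi_1(x_0,[v_{n,k}],\nu_{v_{n,k}})\,d\cH^{N-1}=O(\varepsilon),
\end{equation*}
since the total jump variation of $v_{n,k}$ in $Q_\nu$ is bounded uniformly by $C(1+|[g(x_0)]|)$ thanks to (H5) and the assumption that $\frac{d\mu}{d\cH^{N-1}\lfloor S_g}(x_0)$ is finite.

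The remaining obstacle, and the heart of the proof, is to surgically modify $v_{n,k}$ into a competitor for $\gamma_1(x_0,[g(x_0)],\nu)$, i.e., with boundary datum $\gamma_{([g(x_0)],\nu)}$ on $\partial Q_\nu$ and vanishing classical gradient a.e. in $Q_\nu$. For the gradient, I would invoke Theorem \ref{Al} to produce $\rho_{n,k}\in SBV(Q_\nu;\R{d})$ with $\nabla\rho_{n,k}=-\nabla v_{n,k}$, $\|D^s\rho_{n,k}\|(Q_\nu)\le C\|\nabla v_{n,k}\|_{L^1}$, and $\|\rho_{n,k}\|_{L^1}\le C\|\nabla v_{n,k}\|_{L^1}\to 0$. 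Then $v_{n,k}+\rho_{n,k}$ has zero absolutely continuous gradient, and the added jump cost, controlled by (H5) and $\|D^s\rho_{n,k}\|$, goes to zero. For the boundary datum, I would apply Fubini on concentric cubes $Q_\nu(0,r)$ with $r\nearrow 1$ to select a radius $r_{n,k}\to 1$ at which the trace of $v_{n,k}+\rho_{n,k}-\gamma_{([g(x_0)],\nu)}$ tends to $0$ in $L^1(\partial Q_\nu(0,r_{n,k}))$, then glue $\gamma_{([g(x_0)],\nu)}$ outside $Q_\nu(0,r_{n,k})$ using Lemma \ref{matias-lemma} on the thin annular shell to absorb the gradient mismatch; the resulting extra jump contribution on the gluing interface is controlled by the trace $L^1$-distance and by $\cL^N(Q_\nu\setminus Q_\nu(0,r_{n,k}))\to 0$, and hence is $o(1)$ along the diagonal.

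The resulting competitor $\tilde z_{n,k}$ is admissible in the definition of $\gamma_1(x_0,[g(x_0)],\nu)$, so by (H8), (H5) and the subadditive combination of jump costs from the three ingredients ($v_{n,k}$, $\rho_{n,k}$, the boundary-matching patch),
\begin{equation*}
\frac{d\mu}{d\cH^{N-1}\lfloor S_g}(x_0)\ge\liminf_{k,n}\int_{S_{\tilde z_{n,k}}\cap Q_\nu}\Psi_1(x_0,[\tilde z_{n,k}],\nu_{\tilde z_{n,k}})\,d\cH^{N-1}+O(\varepsilon)\ge\gamma_1(x_0,[g(x_0)],\nu)+O(\varepsilon),
\end{equation*}
and letting $\varepsilon\to 0^+$ yields the claim. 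The main technical obstacle will be organizing the two independent modifications (fixing the gradient and fixing the boundary trace) so that their errors remain controllable along a single diagonal sequence, and in particular ensuring the measure-zero choice of $r_{n,k}$ is compatible with the choice of $\delta_k$.
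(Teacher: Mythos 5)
Your proposal follows essentially the same blow-up strategy as the paper: rescale $u_n$ around $x_0$, use (H6) to freeze the first argument of $\Psi_1$ at $x_0$, invoke Alberti's Theorem~\ref{Al} to cancel the absolutely continuous gradient, and then cut at a well-chosen radius $r_{n,k}$ and glue $\gamma_{([g(x_0)],\nu)}$ outside. Two small corrections are worth making. First, your generic condition (iii) -- ``$x_0$ is a Lebesgue point of $G$'' -- is not the condition that holds $\cH^{N-1}$-a.e.\ on $S_g$; since $S_g$ is $\cL^N$-null, the exceptional set of non-Lebesgue points could a priori carry positive $\cH^{N-1}$-measure there. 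What is actually available $\cH^{N-1}$-a.e.\ on $S_g$, by mutual singularity of $|G|\cL^N$ and $\cH^{N-1}\lfloor S_g$, is the zero $(N-1)$-density condition $\delta^{1-N}\int_{Q_\nu(x_0,\delta)}|G|\to 0$, which is what your estimate $\|\nabla v_{n,k}\|_{L^1(Q_\nu)}\to 0$ truly uses; restating (iii) in that form closes the gap. Second, invoking Lemma~\ref{matias-lemma} on the annulus to ``absorb the gradient mismatch'' is superfluous: the target competitor must have $\nabla u=0$ a.e., and $\gamma_{([g(x_0)],\nu)}$ already has zero absolutely continuous gradient, so it can be glued directly in $Q_\nu\setminus Q_\nu(0,r_{n,k})$ with the only extra cost being the trace mismatch on $\partial Q_\nu(0,r_{n,k})$ and the jump of $\gamma_{([g(x_0)],\nu)}$ on the vanishing set $[Q_\nu\setminus Q_\nu(0,r_{n,k})]\cap\{y\cdot\nu=0\}$, both $o(1)$. (Lemma~\ref{matias-lemma} \emph{is} needed in the bulk lower bound, Proposition~\ref{lowerbulk}, because there the target gradient $G(x_0)-\nabla g(x_0)$ is generically nonzero.) With these adjustments, the argument is correct.
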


\begin{proof}
Let $x_0 \in S_g$ be such that $\displaystyle \frac{d \mu}{d \cH%
^{N-1}\lfloor S_g}(x_0)$ exists and is finite, denote by $\nu := \nu_g(x_0)$
and assume the point $x_0$ also satisfies 
\begin{equation}  \label{x01}
\lim_{\delta \to 0^+} \frac{\cH^{N-1}(S_g \cap Q_\nu(x_0, \delta))}{%
\delta^{N-1}} = 1,
\end{equation}
and 
\begin{equation}  \label{x02}
\lim_{\delta \to 0^+} \frac{1}{\delta^{N-1}} \int_{Q_\nu (x_0, \delta)}
|G(x)| \, dx = 0.
\end{equation}
We point out that these conditions hold for $\cH^{N-1}$ a.e. $x_0 \in S_g$.
Let $\{\delta_k \}$ be a sequence of positive real numbers such that $%
\delta_k \to 0^+$ and $\mu(\partial Q_\nu(x_0, \delta_k))= 0.$ Therefore, 
\begin{equation*}
\lim_{n \to +\infty} \mu_n(Q_\nu(x_0, \delta_k)) = \mu(Q_\nu(x_0, \delta_k))
\end{equation*}
and so, by \eqref{x01}, (H6) and for $k$ large enough, we have 
\begin{eqnarray}  \label{wnk}
& & \frac{d\mu}{d \cH^{N-1}\lfloor S_g}(x_0) = \lim_{k,n} \frac{1}{\cH%
^{N-1}(S_g \cap Q_\nu(x_0, \delta_k))}\mu_n(Q_\nu(x_0, \delta_k))  \notag \\
& & = \lim_{k,n}\frac{1}{\cH^{N-1}(S_g \cap Q_\nu(x_0, \delta_k))}
\int_{S_{u_n} \cap Q_\nu(x_0, \delta_k)} \Psi_1(x,[u_n(x)],\nu_{u_n}(x))\, d%
\cH^{N-1}(x)  \notag \\
& & = \lim_{k,n} \frac{\delta_k^{N-1}}{\cH^{N-1}(S_g \cap Q_\nu(x_0,
\delta_k))}  \notag \\
& & \hspace{1cm} \cdot \int_{Q_\nu \cap \{ y: x_0 + \delta_k y \in
S_{u_n}\}} \hspace{-0.3cm} \Psi_1(x_0 + \delta_k y, [u_n(x_0 + \delta_k y)],
\nu_{u_n}(x_0 + \delta_k y))\, d\cH^{N-1}(y)  \notag \\
& & \geq \lim_{k,n} \int_{Q_\nu \cap \{ y: x_0 + \delta_k y \in S_{u_n}\}}%
\hspace{-0.3cm} \Psi_1(x_0 , [u_n(x_0 + \delta_k y)], \nu_{u_n}(x_0 +
\delta_k y))\, d\cH^{N-1}(y) - O(\eps)  \notag \\
& & \geq \lim_{k,n} \int_{Q_\nu \cap S_{w_{n,k}}}\hspace{-0.3cm} \Psi_1(x_0
, [w_{n,k}(y)], \nu_{w_{n,k}}(y))\, d\cH^{N-1}(y) - O(\eps),
\end{eqnarray}
where, for $y \in Q_\nu$, we define 
\begin{equation*}
w_{n,k}(y) := u_n(x_0 + \delta_k y) - g^-(x_0).
\end{equation*}
By definition of $g^-(x_0)$, $g^+(x_0)$, by \eqref{x02}, and since $u_n \to g$ in 
$L^1(\O ;{\mathbb{R}}^d)$, and $\nabla u_n \to G$
in $L^1(\O ;{\mathbb{R}}^{d\times N})$, one has 
\begin{equation}  \label{convL1bis}
w_{n,k} \mathrel{}\mathop{\longrightarrow}\limits^{L^1}_{k,n \to +\infty}
\gamma_{([g(x_0)],\nu)} \; \; \mbox{and} \; \; \nabla w_{k,n} \mathrel{}%
\mathop{\longrightarrow}\limits^{L^1}_{k,n \to +\infty} 0.
\end{equation}
We must now modify $\{w_{n,k}\}$ in order to obtain a new sequence which is
equal to $\gamma_{([g(x_0)],\nu)}$ on the boundary of $Q_\nu$ and whose
gradient is zero a.e. in $Q_\nu$. For $y \in Q_\nu$, define 
\begin{equation*}
v_{n,k}(y) := w_{n,k}(y) - \gamma_{([g(x_0)],\nu)}(y).
\end{equation*}
Since $v_{n,k} \mathrel{}\mathop{\longrightarrow}\limits^{L^1}_{k,n \to
+\infty} 0$, we may choose $r_{n,k} \in ]0,1[$ such that $r_{n,k}\mathrel{}%
\mathop{\longrightarrow}\limits^{}_{k,n \to +\infty} 1$ and 
\begin{equation*}
\lim_{k,n}\int_{\partial Q_\nu(0,r_{n,k})}|v_{n,k}(y)| \, d\cH^{N-1}(y) = 0.
\end{equation*}
By Theorem~\ref{Al}, let $\rho_{n,k} \in SBV(Q_\nu;{\mathbb{R}}^d)$ be such
that $\nabla \rho_{n,k}(y) = - \nabla w_{n,k}(y)$, 
\begin{equation*}
\|D^s \rho_{n,k}\|(Q_\nu(0,r_{n,k})) \leq C \|\nabla w_{n,k}\|_{L^1},
\end{equation*}
and define $z_{n,k} := w_{n,k} + \rho_{n,k}$ for $y \in Q_\nu(0,r_{n,k})$.
Notice that $\nabla z_{n,k}(y) = 0$ in $Q_\nu(0,r_{n,k})$. Also, by %
\eqref{convL1bis}, $\nabla \rho_{n,k} \mathrel{}\mathop{\longrightarrow}%
\limits^{L^1}_{k,n \to +\infty} 0$, so $\|D^s \rho_{n,k}\|(Q_\nu(0,r_{n,k}))
\to 0$. Thus, by the continuity of the trace operator with respect to the
intermediate topology it follows that 
\begin{equation*}
\lim_{k,n}\int_{\partial Q_\nu(0,r_{n,k})}|\rho_{n,k}(y)| \, d\cH^{N-1}(y) =
0.
\end{equation*}
Then the sequence 
\begin{equation*}
\tilde{z}_{n,k}(y) := \left \{ 
\begin{array}{ll}
z_{n,k}(y), & \; \mbox{if} \; y \in Q_\nu(0,r_{n,k}) \\ 
&  \\ 
\gamma_{([g(x_0)],\nu)}(y), & \; \mbox{if} \; y \in Q_\nu \setminus
Q_\nu(0,r_{n,k})%
\end{array}%
\right.
\end{equation*}
is admissible for $\gamma_1(x_0,[g(x_0)],\nu)$ and satisfies, by (H5) and
(H8), 
\begin{eqnarray*}
& & \int_{Q_\nu \cap S_{\tilde{z}_{n,k}}} \Psi_1(x_0, [\tilde{z}_{n,k}(y)],
\nu_{\tilde{z}_{n,k}}(y))\, d\cH^{N-1}(y) \\
& & \leq \int_{Q_\nu(0,r_{n,k}) \cap S_{{w}_{n,k}}} \Psi_1(x_0, [{w}%
_{n,k}(y)], \nu_{{w}_{n,k}}(y))\, d\cH^{N-1}(y) \\
& & + \int_{Q(0,r_{n,k}) \cap S_{{\rho}_{n,k}}} \Psi_1(x_0, [{\rho}%
_{n,k}(y)], \nu_{{\rho}_{n,k}}(y))\, d\cH^{N-1}(y) \\
& & + \, C \int_{\partial Q_\nu(0,r_{n,k})}|z_{n,k}(y) -
\gamma_{([g(x_0)],\nu)}(y)| \, d\cH^{N-1}(y) \\
& & + \, C \int_{[Q_\nu \setminus Q_\nu(0,r_{n,k})] \cap
S_{\gamma_{([g(x_0)],\nu)}}} \hspace{-0,5cm}|[g(x_0)]| \, d\cH^{N-1}(y) \\
& & \leq \int_{Q_\nu \cap S_{{w}_{n,k}}} \Psi_1(x_0, [{w}_{n,k}(y)], \nu_{{w}%
_{n,k}}(y))\, d\cH^{N-1}(y) \\
& & + \, C \int_{Q_\nu(0,r_{n,k}) \cap S_{{\rho}_{n,k}}}|[\rho_{n,k}(y)]| \,
d\cH^{N-1}(y) + \, C \int_{\partial Q(0,r_{n,k})}|v_{n,k}(y)| \, d\cH%
^{N-1}(y) \\
& & + \, C \int_{\partial Q(0,r_{n,k})}|\rho_{n,k}(y)| \, d\cH^{N-1}(y) \\
& & + \, C \int_{[Q_\nu \setminus Q_\nu(0,r_{n,k})] \cap
S_{\gamma_{([g(x_0)],\nu)}}} \hspace{-0,5cm}|[g(x_0)]| \, d\cH^{N-1}(y).
\end{eqnarray*}
Since the last four integrals in the above expression converge to zero as $%
k, n \to +\infty$ we conclude from \eqref{wnk} that 
\begin{eqnarray*}
\frac{d\mu}{d \cH^{N-1}\lfloor S_g}(x_0) & \geq& \liminf_{k,n}\int_{Q_\nu
\cap S_{\tilde{z}_{n,k}}} \Psi_1(x_0, [\tilde{z}_{n,k}(y)], \nu_{\tilde{z}%
_{n,k}}(y))\, d\cH^{N-1}(y) + O(\varepsilon) \\
& \geq& \gamma_1(x_0,[g(x_0)],\nu) + O(\varepsilon)
\end{eqnarray*}
so to conclude the result it suffices to let $\varepsilon \to 0^+$.
\end{proof}

\subsubsection{The upper bound inequality}

We now prove the upper bound inequalities for both the bulk and interfacial
terms.

\begin{proposition}
\label{Upperbulk} For $\cL^N$ a.e. $x_0 \in \Omega$ we have that 
\begin{equation*}
\frac{dI_1 (g, G, \Gamma)}{d\cL^N}(x_0) \leq W_1(x_0, G(x_0) - \nabla g(x_0)).
\end{equation*}
\end{proposition}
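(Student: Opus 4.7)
The proof is by a blow-up argument centred at a generic Lebesgue point and based on the cell-problem definition of $W_1$. By Theorem~\ref{radon}, $I_1(g,G,\Gamma,\cdot)\lfloor\mathcal{O}(\Omega)$ is a Radon measure, so the Besicovitch differentiation theorem yields
\[
\frac{dI_1}{d\cL^N}(x_0)=\lim_{\delta\to 0^+}\frac{I_1(g,G,\Gamma,Q(x_0,\delta))}{\delta^N}
\]
for $\cL^N$-a.e.\ $x_0\in\Omega$. I would choose $x_0$ to be, in addition, a point of approximate differentiability of $g$, of approximate continuity of $G$ and $\Gamma$, and such that the singular parts of $Dg$ and $DG$ have zero density at $x_0$; these requirements hold $\cL^N$-a.e. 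Setting $A:=G(x_0)-\nabla g(x_0)$, the task reduces to exhibiting, for each small $\delta>0$, a sequence admissible for $I_1(g,G,\Gamma,Q(x_0,\delta))$ whose jump energy is asymptotically at most $\delta^N(W_1(x_0,A)+o(1))$ as $\delta\to 0^+$.

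Given $\eps>0$, I would take an almost minimiser $u^\eps\in SBV^2(Q;\Rb^d)$ of $W_1(x_0,A)$, so that $u^\eps|_{\partial Q}=0$, $\nabla u^\eps=A$ a.e.\ in $Q$, and $\int_{S_{u^\eps}\cap Q}\Psi_1(x_0,[u^\eps],\nu_{u^\eps})\,d\cH^{N-1}\le W_1(x_0,A)+\eps$. After a truncation, $u^\eps$ may be assumed bounded; extending by zero on $\partial Q$ and then $Q$-periodically produces $\tilde u^\eps\in SBV^2_{\mathrm{loc}}(\Rb^N;\Rb^d)$ with $\nabla\tilde u^\eps\equiv A$ a.e. The rescaled tile $\bar u_n(x):=\tfrac{\delta}{n}\tilde u^\eps(n(x-x_0)/\delta)$ on $Q(x_0,\delta)$ vanishes uniformly and has constant gradient $A$; using the change of variable $y=n(x-x_0)/\delta$, the homogeneity (H7), the local continuity estimate (H6), and the $Q$-periodicity of $\tilde u^\eps$, one obtains
\[
\limsup_{n\to+\infty}\int_{S_{\bar u_n}\cap Q(x_0,\delta)}\Psi_1(x,[\bar u_n],\nu_{\bar u_n})\,d\cH^{N-1}\le\delta^N(1+O(\eps))(W_1(x_0,A)+\eps).
\]
To make the sequence admissible I would apply Lemma~\ref{nonempty} to the second-order structured deformation $(g,G-A,\Gamma)\in SD^2(Q(x_0,\delta);\Rb^d)$ (well defined since $A$ is constant) to obtain $\{v_n\}\subset SBV^2(Q(x_0,\delta);\Rb^d)$ with $v_n\to g$ and $\nabla v_n\to G-A$ in $L^1$, and $\nabla^2 v_n\weakst\Gamma$ in $\cM$. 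Setting $u_n^\delta:=v_n+\bar u_n$, the constancy of $A$ gives $\nabla u_n^\delta=\nabla v_n+A\to G$ in $L^1$ and $\nabla^2 u_n^\delta=\nabla^2 v_n\weakst\Gamma$, while $u_n^\delta\to g$ in $L^1$ because $\bar u_n\to 0$ uniformly. By the sub-additivity (H8) of $\Psi_1$ the total jump energy is bounded by the sum of the micro part above and the macro part $\int_{S_{v_n}\cap Q(x_0,\delta)}\Psi_1(x,[v_n],\nu_{v_n})\,d\cH^{N-1}$.

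The main obstacle is reducing the macro contribution from the crude bound $O(\delta^N)$ that follows from Lemma~\ref{nonempty}, hypothesis (H5), and Theorem~\ref{Al}, to an $o(\delta^N)$ term---otherwise one obtains only $\frac{dI_1}{d\cL^N}(x_0)\le W_1(x_0,A)+O(1)$, which is insufficient. I would overcome this by first using Proposition~\ref{lsc} and an $L^1$-approximation argument to reduce to the simplified setting in which $g$ is piecewise affine and $G$ is piecewise constant on a neighbourhood of $x_0$; there $\nabla g\equiv G-A$ on the relevant cube, one may take $v_n\equiv g$, and the macro contribution collapses to the intrinsic jumps of $g$, whose $\cH^{N-1}$-density at $x_0$ vanishes by the choice of $x_0$. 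Transferring the bound back to the general $(g,G,\Gamma)$ via the lower semicontinuity of Proposition~\ref{lsc}, and letting $n\to+\infty$, $\delta\to 0^+$, and $\eps\to 0^+$, yields $\frac{dI_1}{d\cL^N}(x_0)\le W_1(x_0,G(x_0)-\nabla g(x_0))$, as required.
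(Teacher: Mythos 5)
Your overall blow-up architecture (Radon-measure differentiation, periodic rescaling of an almost-minimiser for the cell problem $W_1(x_0,A)$, and sub-additivity (H8) to split micro and macro contributions) matches the paper's. But your fix for the obstacle you yourself identify does not work, and this is exactly the step the paper treats with a different construction.

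You apply Lemma~\ref{nonempty} to $(g,G-A,\Gamma)$ on $Q(x_0,\delta)$ to obtain the macro sequence $v_n$, which — as you note — only gives $I_1$-energy $O(\delta^N)$, whereas $o(\delta^N)$ is needed. Your proposed repair is to pass to piecewise affine $g$ and piecewise constant $G$ and then take $v_n\equiv g$. This fails on two counts. First, even for piecewise affine $g$, the choice $v_n\equiv g$ has $\nabla^2 v_n=0$, so the admissibility constraint $\nabla^2 v_n\overset{\ast}{\rightharpoonup}\Gamma$ is violated whenever $\Gamma\neq 0$; the $\Gamma$-matching cannot be discarded. Second, Proposition~\ref{lsc} controls the global functional $I_1(g,G,\Gamma,\Omega)$ under approximation of $g$, not its Radon--Nikod\'ym derivative at a fixed $x_0$; passing from $\frac{dI_1(g_n,\cdot)}{d\cL^N}(x_0)\leq\dots$ to $\frac{dI_1(g,\cdot)}{d\cL^N}(x_0)\leq\dots$ is not justified by lower semicontinuity of the total measures, so "transferring the bound back" is a non-sequitur.

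The paper avoids these issues by building, in place of your $v_n$, an explicit corrector with $o(\delta^N)$ jump energy. Concretely, it sets
\[
w_{n,\delta}:=g+u_{n,\delta}+\rho_\delta+\rho_{n,\delta},
\]
where $u_{n,\delta}$ is your rescaled tile, $\rho_\delta$ is produced by two applications of Alberti's Theorem~\ref{Al} (first to obtain $v_\delta$ with $\nabla v_\delta=\Gamma-\nabla G$, then, after a piecewise-constant approximation $v_{k,\delta}$ via Lemma~\ref{ctap}, to obtain $\rho_\delta$ with $\nabla\rho_\delta=G(\cdot)-G(x_0)+\nabla g(x_0)-\nabla g(\cdot)+v_\delta+v_{k(\delta),\delta}$), and $\rho_{n,\delta}$ is a piecewise-constant approximation of $-\rho_\delta$. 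This design simultaneously enforces $\nabla^2 w_{n,\delta}=\Gamma$ exactly and, because $x_0$ is a Lebesgue point of $G$ and $\nabla g$ and $k(\delta)$ is chosen to make $\|v_\delta+v_{k(\delta),\delta}\|_{L^1(Q(x_0,\delta))}\le\delta^{N+1}$, yields $\|D\rho_\delta\|(Q(x_0,\delta))=o(\delta^N)$. The term $g$ itself contributes $\|D^s g\|(Q(x_0,\delta))=o(\delta^N)$ at $\cL^N$-a.e.\ $x_0$. Together with (H5) these make the macro jump energy $o(\delta^N)$, closing the gap that your argument leaves open.
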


\begin{proof}
Let $x_0$ be a point of approximate continuity for $G$ and $\nabla g$, that
is, 
\begin{equation}  \label{701}
\lim_{\delta \to 0^+} \frac{1}{\delta^N}\int_{Q(x_0, \delta)} |G(x) -
G(x_0)| + |\nabla g(x) - \nabla g(x_0)| \, dx = 0.
\end{equation}
Given $\eps > 0$ let $u \in SBV^2( \Omega; {\mathbb{R}}^d)$ be such that $%
u|_{\partial Q} = 0$, $\nabla u(x) = G(x_0) - \nabla g(x_0)$ for a.e. $%
x \in Q$ and 
\begin{equation}  \label{702}
\int_{Q \cap S_u} \Psi_1(x_0, [u(y)], \nu_u(y))\, dy \leq W_1(x_0, G(x_0) -
\nabla g(x_0)) + \eps.
\end{equation}
Extend $u$ by periodicity to all of ${\mathbb{R}}^N$ and for $n \in {\mathbb{%
N}}$ and $\delta > 0$ define 
\begin{equation*}
u_{n, \delta}(x) := \frac{\delta}{n}u \left( \frac{n(x - x _0)}{\delta}%
\right).
\end{equation*}
For each $\delta > 0$, by Theorem \ref{Al}, let $v_\delta \in SBV( Q(x_0,
\delta); {\mathbb{R}}^{d\times N})$ be such that 
\begin{equation}  \label{Lusin1}
\nabla v_\delta = \Gamma(x) - \nabla G(x),
\end{equation}
for $\mathcal{L}^N$ a.e. $x \in Q(x_0, \delta)$, and 
\begin{equation*}
\|Dv_\delta\|(Q(x_0, \delta)) \leq C(N) \int_{Q(x_0,\delta)} |\Gamma(x) - \nabla
G(x)|\, dx.
\end{equation*}
By Lemma \ref{ctap} let $v_{k, \delta}: Q(x_0, \delta) \to {\mathbb{R}}%
^{d\times N}$ be a sequence of piecewise constant functions such that 
\begin{equation}  \label{conv}
v_{k, \delta} \mathrel{}\mathop{\longrightarrow}\limits^{L^1}_{k \to
+\infty} - v_\delta,
\end{equation}
and 
\begin{equation*}
\lim_{k \to +\infty}\|Dv_{k, \delta}\|(Q(x_0, \delta)) =
\|Dv_\delta\|(Q(x_0,\delta)).
\end{equation*}
Applying once more Theorem \ref{Al}, let $\rho_{k, \delta} \in SBV^2(Q(x_0,
\delta); {\mathbb{R}}^d)$ be such that 
\begin{equation}  \label{Lusin2}
\nabla \rho_{k, \delta}(x) = G(x) - G(x_0) + \nabla g(x_0) - \nabla g(x) +
v_\delta(x) + v_{k,\delta}(x),
\end{equation}
for $\mathcal{L}^N$ a.e. $x \in Q(x_0, \delta)$, and 
\begin{equation}  \label{Lusin3}
\|D\rho_{k, \delta}\|(Q(x_0,\delta)) \leq C(N) \int_{Q(x_0, \delta)} |G(x) -
G(x_0)| + |\nabla g(x) - \nabla g(x_0)| + |v_\delta (x) + v_{k, \delta}(x)|
\, dx.
\end{equation}
By \eqref{conv}, for each $\delta > 0$ we can choose $k=k(\delta)$ large
enough so that 
\begin{equation*}
\int_{Q(x_0,\delta)} |v_\delta(x) + v_{k, \delta}(x)| \, dx \leq
\delta^{N+1}.
\end{equation*}
Thus, defining $\rho_\delta := \rho_{\delta, k(\delta)}$, by \eqref{701} and %
\eqref{Lusin3} it follows that 
\begin{equation}  \label{totvar}
\lim_{\delta \to 0^+}\frac{\|D\rho_\delta\|(Q(x_0, \delta))}{\delta^N} = 0.
\end{equation}
Again by Lemma \ref{ctap}, let $\rho_{n,\delta}$ be a sequence of piecewise
constant functions such that, for all $\delta > 0$, 
\begin{equation}  \label{tv2}
\rho_{n, \delta} \mathrel{}\mathop{\longrightarrow}\limits^{L^1}_{n \to
+\infty} - \rho_\delta \; \; \; \text{and}\; \; \; \|D\rho_{n,
\delta}\|(Q(x_0, \delta)) \mathrel{}\mathop{\longrightarrow}\limits^{}_{n\to
+\infty}\|D\rho_{ \delta}\|(Q(x_0,\delta)).
\end{equation}
Now define, for $x \in Q(x_0, \delta)$, 
\begin{equation*}
w_{n, \delta}(x) := g(x) + u_{n,\delta}(x) + \rho_\delta(x) +
\rho_{n,\delta}(x).
\end{equation*}
By periodicity, $w_{n, \delta}\mathrel{}\mathop{\longrightarrow}%
\limits^{L^1}_{n \to +\infty} g$ since, 
\begin{equation*}
\int_{Q(x_0, \delta)} |u_{n, \delta}(x)|\, dx = \frac{\delta^{N+1}}{n}
\int_Q|u(y)|\, dy \mathrel{}\mathop{\longrightarrow}\limits^{}_{n\to
+\infty} 0.
\end{equation*}
Notice also that $\nabla^2 w_{n, \delta} = \Gamma$, and it is easy to verify that 
$\nabla w_{n, \delta} \mathrel{}\mathop{\longrightarrow}\limits^{}_{n\to
+\infty} G$ in $L^1(Q(x_0, \delta); {\mathbb{R}}^d)$. Thus the sequence $%
w_{n,\delta}$ is admissible for $I_1(g, G, \Gamma,Q(x_0, \delta))$ and so, by
(H8), we have 
\begin{eqnarray*}
\frac{dI_1(g, G, \Gamma)}{d\mathcal{L}^N} (x_0) &=& \lim_{\delta \to 0^+} \frac{%
I_1(g, G, \Gamma, Q(x_0,\delta))}{\delta^N} \\
& \leq & \lim_{\delta \to 0^+}\liminf_{n\to +\infty} \left[ \frac{1}{\delta^N%
} \int_{S_{w_{n,\delta}}\cap Q(x_0,\delta)} \Psi_1(x, [w_{n, \delta}(x)],
\nu_{w_{n, \delta}}(x))\, d\mathcal{H}^{N-1}(x)\right] \\
&\leq & \liminf_{\delta \to 0^+}\liminf_{n\to +\infty} \left[\frac{1}{%
\delta^N} \int_{S_g\cap Q(x_0,\delta)} \Psi_1( x, [g(x)], \nu_g(x)) \, d%
\mathcal{H}^{N-1}(x)\right. \\
&+& \frac{1}{\delta^N} \int_{\{x_0 + \frac{\delta}{n}S_u\}\cap
Q(x_0,\delta)} \hspace{-0,5cm} \Psi_1\left(x, \frac{\delta}{n}\Big[u\Big(%
\frac{n(x-x_0)}{\delta}\Big)\Big], \nu_u\Big(\frac{n(x-x_0)}{\delta}\Big)%
\right)\, d\mathcal{H}^{N-1}(x) \\
&+& \frac{1}{\delta^N} \int_{S_{\rho_\delta}\cap Q(x_0,\delta)} \Psi_1(x,
[\rho_\delta(x)], \nu_{\rho_\delta}(x))\, d\mathcal{H}^{N-1}(x) \\
& +& \left. \frac{1}{\delta^N} \int_{S_{\rho_{n,\delta}}\cap Q(x_0,\delta)}
\Psi_1(x, [\rho_{n,\delta}(x)], \nu_{\rho_{n,\delta}}(x))\, d\mathcal{H}%
^{N-1}(x)\right].
\end{eqnarray*}
Since $\displaystyle \frac{d\|D^sg\|}{d\mathcal{L}^N}(x_0) = 0,$ by (H5) we
conclude that 
\begin{eqnarray*}
\frac{1}{\delta^N} \int_{S_g\cap Q(x_0,\delta)} \Psi_1(x,[g(x)],\nu_g(x))\, d%
\mathcal{H}^{N-1}(x) & \leq& \frac{1}{\delta^N} \int_{S_g\cap Q(x_0,\delta)}
C| [g(x)]|\, d\cH^{N-1}(x) \\
&\leq& C\frac{\|D^s g\|(Q(x_0, \delta))}{\cL^N(Q(x_0, \delta))} \mathrel{}%
\mathop{\longrightarrow}\limits^{}_{\delta \to 0^+} 0.
\end{eqnarray*}
Moreover, once again hypothesis (H5), together with \eqref{totvar} and %
\eqref{tv2}, also yields 
\begin{equation*}
\lim_{\delta \to 0^+}\frac{1}{\delta^N} \int_{S_{\rho_\delta}\cap
Q(x_0,\delta)} \Psi_1(x,[\rho_\delta(x)], \nu_{\rho_\delta}(x))\, d\mathcal{H%
}^{N-1}(x) = 0,
\end{equation*}
and 
\begin{equation*}
\lim_{\delta \to 0^+}\lim_{n \to +\infty}\frac{1}{\delta^N}
\int_{S_{\rho_{n,\delta}}\cap Q(x_0,\delta)} \Psi_1(x,[\rho_{n,\delta}(x)],
\nu_{\rho_{n,\delta}}(x))\, d\mathcal{H}^{N-1}(x) = 0.
\end{equation*}
Finally, changing variables, using the periodicity of $u$, (H7) and %
\eqref{702}, we obtain 
\begin{eqnarray*}
&& \frac{1}{\delta^N} \int_{\{x_0 + \frac{\delta}{n}S_u\}\cap Q(x_0,\delta)}
\Psi_1\left(x,\frac{\delta}{n}\Big[u\Big(\frac{n(x-x_0)}{\delta}\Big)\Big],
\nu_u\Big(\frac{n(x-x_0)}{\delta}\Big)\right)\, d\mathcal{H}^{N-1}(x) \\
&& = \frac{1}{n^N} \int_{nQ \cap S_u} \Psi_1\left(x_0 + \frac{\delta}{n}y,
[u(y)], \nu_u(y)\right)\, d\cH^{N-1}(y)
\end{eqnarray*}
\begin{eqnarray*}
& & = \int_{Q\cap S_u} \Psi_1( x_0, [u(y)], \nu_u(y))\, d\cH^{N-1}(y) \\
& & \hspace{1cm} + \int_{Q\cap S_u} \Psi_1\left( x_0 + \frac{\delta}{n}y,
[u(y)], \nu_u(y)\right) - \Psi_1( x_0, [u(y)], \nu_u(y)) \, d\cH^{N-1}(y) \\
&& \leq W_1(x_0, G(x_0)-\nabla g(x_0)) + \eps \\
& & \hspace{1cm} + \int_{Q \cap S_u} \Psi_1\left( x_0 + \frac{\delta}{n}y,
[u(y)], \nu_u(y)\right) - \Psi_1( x_0, [u(y)], \nu_u(y)) \, d\cH^{N-1}(y),
\end{eqnarray*}
where, by (H6) and for $\delta$ small enough, 
\begin{eqnarray*}
&&\left| \int_{Q \cap S_u} \Psi_1\left( x_0 + \frac{\delta}{n}y, [u(y)],
\nu_u(y)\right) - \Psi_1( x_0, [u(y)], \nu_u(y)) \, d\cH^{N-1}(y)\right| \\
&& \leq \eps C \int_{Q \cap S_u} |[u(y)]| \, d\cH^{N-1}(y) \leq \eps C
\|Du\|(Q).
\end{eqnarray*}
Thus the result follows by letting $\eps \to 0^+$.
\end{proof}

\begin{proposition}
\label{upperinterfacial} For $\cH^{N-1}$ a.e. $x_0\in S_g$ we have that 
\begin{equation}  \label{iub}
\frac{d I_1(g,G,\Gamma)}{d \cH^{N-1}\lfloor S_g}(x_0)\leq\gamma_1(x_0,[g(x_0)],%
\nu_g(x_0)).
\end{equation}
\end{proposition}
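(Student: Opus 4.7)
The proof is a blow-up argument analogous to Proposition~\ref{Upperbulk}, with the test function coming from the interfacial cell problem $\gamma_1$. First I choose $x_0 \in S_g$ to be a ``good'' point (valid for $\mathcal{H}^{N-1}$-a.e.\ $x_0 \in S_g$) at which: (i) $\mathcal{H}^{N-1}(S_g \cap Q_\nu(x_0, \delta))/\delta^{N-1} \to 1$; (ii) $x_0$ is an approximate jump point of $g$, i.e.\ $\delta^{-N}\int_{Q_\nu(x_0, \delta)} |g - \tilde g|\,dx \to 0$ where $\tilde g(x) := g^-(x_0) + \gamma_{([g(x_0)],\nu)}(x - x_0)$; (iii) $\delta^{-(N-1)}\int_{Q_\nu(x_0,\delta)} (|G|+|\nabla g|+|\Gamma|)\,dx \to 0$ and $\delta^{-(N-1)}\|DG\|(Q_\nu(x_0,\delta)) \to 0$; and (iv) the derivative $\tfrac{dI_1(g,G,\Gamma)}{d\mathcal{H}^{N-1}\lfloor S_g}(x_0) = \lim_{\delta\to 0^+}I_1(g,G,\Gamma,Q_\nu(x_0,\delta))/\delta^{N-1}$ exists. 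Write $\nu := \nu_g(x_0)$ and $\lambda := [g(x_0)]$.

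Given $\eta > 0$, I select $u_\eta \in SBV^2(Q_\nu; \mathbb{R}^d)$ with $u_\eta|_{\partial Q_\nu} = \gamma_{(\lambda,\nu)}$, $\nabla u_\eta = 0$ a.e., and $\int_{S_{u_\eta}\cap Q_\nu}\Psi_1(x_0,[u_\eta],\nu_{u_\eta})\,d\mathcal{H}^{N-1} < \gamma_1(x_0,\lambda,\nu) + \eta$. The rescaled function $\hat u_\delta(x) := u_\eta((x-x_0)/\delta) + g^-(x_0)$ on $Q_\nu(x_0, \delta)$ has $\nabla \hat u_\delta \equiv 0$, boundary trace $\tilde g$ on $\partial Q_\nu(x_0,\delta)$, and (by change of variables together with (H7) and (H6)) interfacial energy bounded above by $\delta^{N-1}(\gamma_1(x_0,\lambda,\nu) + \eta) + o(\delta^{N-1})$.

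To build an admissible competitor for $I_1(g, G, \Gamma, Q_\nu(x_0, \delta))$, I would apply the Alberti-plus-piecewise-constant scheme from Proposition~\ref{Upperbulk}: via Theorem~\ref{Al} and Lemma~\ref{ctap}, construct $v_\delta$ with $\nabla v_\delta = \Gamma - \nabla G$, a piecewise-constant $v_{k,\delta} \to -v_\delta$ with $k(\delta)$ chosen so that $\|v_\delta + v_{k(\delta),\delta}\|_{L^1(Q_\nu(x_0,\delta))} \leq \delta^N$; next $\rho_\delta \in SBV^2$ with $\nabla \rho_\delta = G + v_\delta + v_{k(\delta),\delta}$; a piecewise constant $\rho_{n,\delta} \to -\rho_\delta$; and a piecewise constant $c_{n,\delta} \to g - \hat u_\delta$. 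Set
\begin{equation*}
w_{n,\delta}(x) := \hat u_\delta(x) + \rho_\delta(x) + \rho_{n,\delta}(x) + c_{n,\delta}(x).
\end{equation*}
One verifies $w_{n,\delta} \to g$ in $L^1$, $\nabla w_{n,\delta}$ differs from $G$ by $v_\delta + v_{k(\delta),\delta}$ with $L^1$-norm at most $\delta^N$ (which is absorbed by a standard diagonalization in $\delta$), and $\nabla^2 w_{n,\delta} = \Gamma$; up to this modification the sequence is admissible.

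The main obstacle is to estimate $\|Dc_{n,\delta}\|$, which by Lemma~\ref{ctap} converges to $\|D(g - \hat u_\delta)\|(Q_\nu(x_0,\delta))$; a priori this is only $O(\delta^{N-1})$, which would yield $O(1)$ after rescaling. The required improvement to $o(\delta^{N-1})$ exploits two facts: (a) by Green's identity and the common boundary trace $\tilde g$, $\int_{Q_\nu(x_0,\delta)} D(g - \hat u_\delta)$ has integral $o(\delta^{N-1})$; and (b) by rectifiability of $S_g$ at $x_0$ together with a preliminary density argument that reduces to test functions $u_\eta$ whose jumps are concentrated in an arbitrarily thin slab about $\{y\cdot\nu = 0\}$ while preserving $\int\Psi_1 < \gamma_1 + \eta$, both $D^s g$ and $D^s \hat u_\delta$ concentrate near the hyperplane through $x_0$ so that their difference has total variation $o(\delta^{N-1})$. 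Combined with the $o(\delta^{N-1})$ bounds on $\|D\rho_\delta\|$ and $\|D\rho_{n,\delta}\|$ coming from the vanishing rescaled averages in (iii) via Theorem~\ref{Al}, sub-additivity (H8) and linear growth (H5) give $I_1(g, G, \Gamma, Q_\nu(x_0, \delta)) \leq \delta^{N-1}(\gamma_1(x_0,\lambda,\nu) + \eta) + o(\delta^{N-1})$. Dividing by $\mathcal{H}^{N-1}(S_g \cap Q_\nu(x_0, \delta)) = \delta^{N-1}(1+o(1))$ and letting $\delta \to 0^+$ then $\eta \to 0^+$ completes the proof of \eqref{iub}.
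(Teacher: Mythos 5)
Your approach has a genuine gap at exactly the point you flag as the ``main obstacle.'' You attempt to prove the estimate directly for a general $g\in SBV^2$ by inserting a piecewise-constant correction $c_{n,\delta}\to g-\hat u_\delta$, and you need $\|Dc_{n,\delta}\|(Q_\nu(x_0,\delta))=o(\delta^{N-1})$, since it enters the energy through $\Psi_1$ and the linear-growth bound (H5). But $\|D(g-\hat u_\delta)\|(Q_\nu(x_0,\delta))$ contains $\|D^s g\|(Q_\nu(x_0,\delta))+\|D^s\hat u_\delta\|(Q_\nu(x_0,\delta))$ whenever the jump sets are essentially disjoint --- which they are for a generic $g$, since $S_g$ is merely a countably rectifiable set tangent at $x_0$ to $\{(x-x_0)\cdot\nu=0\}$, while the jumps of $\hat u_\delta$ come from the rescaled test function $u_\eta$ and lie on a completely different set. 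Each of these two terms is genuinely of order $\delta^{N-1}$ (e.g.\ $\|D^s g\|(Q_\nu(x_0,\delta))\sim|\lambda|\delta^{N-1}$), so the correction contributes a strictly positive constant after division by $\delta^{N-1}$. Your proposed fix does not close this. Item (a) controls only the net value $\int D(g-\hat u_\delta)$ via Gauss--Green, not the total variation $\|D(g-\hat u_\delta)\|$, and these are very different quantities for a measure with cancellations. Item (b) is simply false as a mechanism: concentrating both $D^s g$ and $D^s\hat u_\delta$ in a thin slab does not make their difference small in total variation, because the two singular measures live on distinct rectifiable sets and do not cancel against each other; you would still get $\|D^s g\|+\|D^s\hat u_\delta\|=O(\delta^{N-1})$.

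What is missing is the reduction step the paper performs before any blow-up: by the Ambrosio--Mortola--Tortorelli localization argument it suffices to prove \eqref{iub} for $g=\lambda\chi_E$, and one first takes $E$ a polyhedral set. In that case, near $\mathcal{H}^{N-1}$-a.e.\ $x_0\in\partial E$ the set $E$ is exactly a half-space on a whole cube $Q_\nu(x_0,\delta)$ for $\delta$ small, so the competitor $u_{n,\delta}$ in \eqref{undelta} (which equals $\lambda$ on $Q_\nu^+\setminus D_\nu^n$, equals the rescaled $u$ inside the thin slab $D_\nu^n$, and equals $0$ on $Q_\nu^-\setminus D_\nu^n$) coincides with $g$ \emph{outside the slab}; no piecewise-constant correction $c_{n,\delta}$ is needed at all, and the only extra terms ($\rho_\delta+\tilde\rho_{n,\delta}$, correcting the first and second gradients) are shown to carry $o(\delta^{N-1})$ interfacial energy by \eqref{totvar1} and \eqref{tv3}. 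The passage from polyhedral $E$ to a general set of finite perimeter is then done by Baldo's approximation (Theorem~\ref{BaldoL3.1}), the lower semicontinuity of Proposition~\ref{lsc}, Reshetnyak's Theorem~\ref{Reshetnyak}, and the monotone envelope $\gamma_1=\inf_m\gamma_1^m$ from Remark~\ref{gamma1}. You should restructure your argument along these two steps; the direct blow-up for arbitrary $g$ cannot be salvaged as written.
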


\begin{proof}
Following an argument of Ambrosio, Mortola and Tortorelli \cite{AMT}, it
suffices to prove \eqref{iub} when $g = \lambda\mbox{\Large $\chi $}_{E}$
where $\lambda \in {\mathbb{R}}^d$ and $\mbox{\Large $\chi $}_{E}$ is the
characteristic function of a set of finite perimeter $E$. We start by
addressing the case where $E$ is a polyhedral set. Let $x_0 \in S_g$ be such
that 
\begin{equation}  \label{propx0}
\lim_{\delta \to 0^+}\frac{1}{\delta^{N-1}}\int_{Q_\nu(x_0,\delta)}|G(x)| \,
dx = 0,
\end{equation}
where we are denoting by $\nu := \nu_g(x_0)$, and $[g(x_0)] = \lambda$. By
definition of $\gamma_1(x_0,\lambda,\nu)$, given $\eps>0$, consider $u\in
SBV^2(Q_\nu;{\mathbb{R}}^d)$ such that $u_{|\partial
Q_\nu}(x)=\gamma_{(\lambda,\nu)}(x)$, $\nabla u=0$ a.e. in $Q_\nu$, and 
\begin{equation}  \label{choiceu}
\int_{Q_\nu} \Psi_1(x_0,[u(y)],\nu_u(y))\,d\cH^{N-1}(y)\leq
\gamma_1(x_0,\lambda,\nu)+\eps.
\end{equation}
For $\delta>0$ small enough, and $n\in {\mathbb{N}}$, define 
\begin{eqnarray*}
D^n_\nu(x_0,\delta) & := & Q_\nu(x_0,\delta)\cap\left\{x:\frac{|(x-x_0)\cdot
\nu|}{\delta} <\frac{1}{2n}\right\}, \\
Q^+_\nu(x_0,\delta) & := & Q_\nu(x_0,\delta)\cap\left\{x:\frac{(x-x_0)\cdot
\nu}{\delta} >0 \right\}, \\
Q^-_\nu(x_0,\delta) & := & Q_\nu(x_0,\delta)\cap\left\{x:\frac{(x-x_0)\cdot
\nu}{\delta} <0 \right\},
\end{eqnarray*}
and let 
\begin{equation}  \label{undelta}
u_{n,\delta}(x):=%
\begin{cases}
\lambda & x\in Q^+_\nu(x_0,\delta)\setminus D^n_\nu(x_0,\delta), \\ 
u\left(\frac{n(x-x_0)}{\delta}\right) & x\in D^n_\nu(x_0,\delta), \\ 
0 & x\in Q^-_\nu(x_0,\delta)\setminus D^n_\nu(x_0,\delta),%
\end{cases}%
\end{equation}
where $u$ has been extended by $Q$-periodicity to all of ${\mathbb{R}}^N$.
Notice that, by periodicity of $u$, 
\begin{equation*}
\lim_{n\to+\infty}\|u_{n,\delta} -
\tilde\gamma_{(\lambda,\nu)}\|_{L^1(Q_\nu(x_0,\delta);{\mathbb{R}}^d)} = 0,
\end{equation*}
where $\tilde\gamma_{(\lambda,\nu)}(x):=\gamma_{(\lambda,\nu)}(x-x_0)$.

By Theorem \ref{Al}, let $v_\delta \in SBV( Q_\nu(x_0, \delta); {\mathbb{R}}%
^{d\times N})$ be such that 
\begin{equation}  \label{Lusinho}
\nabla v_\delta = \Gamma(x) - \nabla G(x),
\end{equation}
for $\mathcal{L}^N$ a.e. $x \in Q_\nu(x_0,\delta)$, and 
\begin{equation*}
\|Dv_\delta\|(Q(x_0,\delta)) \leq C(N) \int_{Q_\nu(x_0,\delta)} |\Gamma(x) -
\nabla G(x)|\, dx.
\end{equation*}
By Lemma \ref{ctap}, let $v_{n,\delta} \in SBV(Q(x_0,\delta);{\mathbb{R}}%
^{d\times N})$ be a sequence of piecewise constant functions such that 
\begin{equation}  \label{conv1}
v_{n,\delta} \mathrel{}\mathop{\longrightarrow}\limits^{L^1}_{n \to +\infty}
- v_\delta,
\end{equation}
and 
\begin{equation*}
\lim_{n \to +\infty}\|Dv_{n,\delta}\|(Q_\nu(x_0,\delta)) =
\|Dv_\delta\|(Q(x_0,\delta)).
\end{equation*}
Applying again Theorem \ref{Al}, let $\rho_{n,\delta} \in
SBV^2(Q_\nu(x_0,\delta);{\mathbb{R}}^d)$ be such that 
\begin{equation}  \label{Lusin4}
\nabla \rho_{n, \delta}(x) = G(x) + v_\delta(x) + v_{n,\delta}(x),
\end{equation}
for $\mathcal{L}^N$ a.e. $x \in Q_\nu(x_0,\delta)$, and 
\begin{equation}  \label{Lusin5}
\|D\rho_{n,\delta}\|(Q_\nu(x_0,\delta)) \leq C(N) \int_{Q_\nu(x_0,\delta)}
|G(x)| + |v_\delta (x) + v_{n, \delta}(x)| \, dx.
\end{equation}
Notice that $\nabla^2 \rho_{n,\delta}(x) = \Gamma(x)$. By \eqref{conv1}, for each 
$\delta$ we can choose $n(\delta)$ such that 
\begin{equation*}
\int_{Q_\nu(x_0, \delta)} |v_\delta(x) + v_{n(\delta), \delta}(x)| \, dx
\leq \delta^{N}.
\end{equation*}
Then, writing for simplicity $\rho_\delta$ instead of $\rho_{n(\delta),%
\delta}$, by \eqref{Lusin5} and \eqref{propx0} we have that 
\begin{equation}  \label{totvar1}
\lim_{\delta \to 0^+}\frac{\|D\rho_\delta\|(Q_\nu(x_0,\delta))}{\delta^{N-1}}
= 0.
\end{equation}
By Lemma \ref{ctap}, let $\tilde{\rho}_{n,\delta} \in SBV(Q_\nu(x_0,\delta);{%
\mathbb{R}}^d)$ be a sequence of piecewise constant functions such that, for
all $\delta > 0$, 
\begin{equation}  \label{tv3}
\tilde{\rho}_{n, \delta} \mathrel{}\mathop{\longrightarrow}\limits^{L^1}_{n
\to +\infty} - \rho_\delta \; \; \; \text{and}\; \; \; \lim_{n\to +\infty}\|D%
\tilde{\rho}_{n, \delta}\|(Q_\nu(x_0, \delta)) = \|D\rho_{
\delta}\|(Q_\nu(x_0, \delta)).
\end{equation}
Now, for $x \in Q_\nu(x_0, \delta)$, define the sequence 
\begin{equation*}
w_{n,\delta}(x) := u_{n,\delta}(x) + \rho_\delta(x) + \tilde{\rho}_{n,
\delta}(x).
\end{equation*}
We point out that 
\begin{equation*}
\lim_{n\to+\infty}\|w_{n,\delta} -
\tilde\gamma_{(\lambda,\nu)}\|_{L^1(Q_\nu(x_0,\delta);{\mathbb{R}}^{d})} =
\lim_{n\to+\infty}\|w_{n,\delta} - g\|_{L^1(Q_\nu(x_0,\delta);{\mathbb{R}}%
^{d})} = 0,
\end{equation*}
that 
\begin{equation*}
\lim_{n\to+\infty}\|\nabla w_{n,\delta} - G\|_{L^1(Q_\nu(x_0,\delta);{%
\mathbb{R}}^{d\times N})} = 0,
\end{equation*}
and that $\nabla^2 w_{n,\delta} = \Gamma$, hence the sequence $w_{n,\delta}$ is
admissible for $I_1(g,G,\Gamma,Q_\nu(x_0,\delta))$. Therefore we have, by (H8)
and (H5), 
\begin{eqnarray*}
& & \frac{d I_1(g,G,\Gamma)}{d \cH^{N-1}\lfloor S_g}(x_0) = \lim_{\delta\to0^+}%
\frac{I_1(g,G,\Gamma,Q_\nu(x_0,\delta))}{\delta^{N-1}} \\
& & \leq \lim_{\delta\to0^+}\liminf_{n\to+\infty}\frac{1}{\delta^{N-1}}%
\int_{S_{w_{n,\delta}}\cap Q_\nu(x_0,\delta)}
\Psi_1(x,[w_{n,\delta}(x)],\nu_{w_{n,\delta}}(x))\,d\cH^{N-1}(x) \\
& & \leq \limsup_{\delta\to0^+}\limsup_{n\to+\infty}\frac{1}{\delta^{N-1}}%
\int_{S_{u_{n,\delta}}\cap Q_\nu(x_0,\delta)}
\Psi_1(x,[u_{n,\delta}(x)],\nu_{(u_{n,\delta})}(x))\,d\cH^{N-1}(x) \\
& & + \limsup_{\delta\to0^+}\limsup_{n\to+\infty}\frac{1}{\delta^{N-1}}
\int_{S_{\rho_\delta+\tilde{\rho}_{n,\delta}}\cap Q_\nu(x_0,\delta)}
\Psi_1(x,[\rho_\delta(x)+\tilde{\rho}_{n,\delta}(x)],\nu_{\rho_\delta+\tilde{%
\rho}_{n,\delta}}(x))\,d\cH^{N-1}(x) \\
& & \leq \limsup_{\delta\to0^+}\limsup_{n\to+\infty}\frac{1}{\delta^{N-1}} 
\hspace{-0,1cm} \int_{\{x: \frac{n(x-x_0)}{\delta}\in S_u\}\cap
D^n_\nu(x_0,\delta)} \hspace{-0,7cm} \Psi_1\left(x,\Big[u\Big(\frac{n(x-x_0)%
}{\delta}\Big)\Big],\nu_u\Big(\frac{n(x-x_0)}{\delta}\Big)\right) d\cH%
^{N-1}(x) \\
& & + \limsup_{\delta\to0^+}\limsup_{n\to+\infty}\frac{1}{\delta^{N-1}}
\int_{S_{\rho_\delta + \tilde{\rho}_{n,\delta}}\cap Q(x_0,\delta)} C
|[\rho_\delta(x)+\tilde{\rho}_{n,\delta}(x)]|\,d\cH^{N-1}(x).
\end{eqnarray*}
By \eqref{totvar1} and \eqref{tv3} the integral in the last line vanishes in
the limit, while by changing variables setting $y:=\frac{n(x-x_0)}{\delta}$,
we obtain by (H6), for $\delta$ small enough, 
\begin{eqnarray*}
& & \limsup_{\delta\to0^+}\limsup_{n\to+\infty}\frac{1}{\delta^{N-1}} 
\hspace{-0,1cm} \int_{\{x: \frac{n(x-x_0)}{\delta}\in S_u\}\cap
D^n_\nu(x_0,\delta)} \hspace{-0,7cm} \Psi_1\left(x,\Big[u\Big(\frac{n(x-x_0)%
}{\delta}\Big)\Big],\nu_u\Big(\frac{n(x-x_0)}{\delta}\Big)\right) d\cH%
^{N-1}(x) \\
& & \leq \limsup_{\delta\to0^+}\limsup_{n\to+\infty}\frac{1}{n^{N-1}}
\int_{S_u\cap \{y \in nQ_\nu :|y\cdot \nu|\leq\frac{1}{2}\}} \Psi_1\left(x_0+%
\frac{\delta}{n}y,[u(y)],\nu_u(y)\right)\,d\cH^{N-1}(y) \\
& & \leq \limsup_{\delta\to0^+}\limsup_{n\to+\infty}\frac{1}{n^{N-1}}\left[
\int_{S_u\cap \{y \in nQ_\nu: |y\cdot \nu|\leq \frac{1}{2}\}}
\Psi_1(x_0,[u(y)],\nu_u(y))\,d\cH^{N-1}(y) \right. \\
& & \left. \hspace{2cm}+ \int_{S_u\cap \{y \in nQ_\nu: |y\cdot \nu|\leq 
\frac{1}{2}\}} \varepsilon C |[u(y)]| \, d\cH^{N-1}(y)\right] \\
& & = \int_{S_u \cap Q_\nu} \Psi_1(x_0,[u(y)],\nu_u(y))\,d\cH^{N-1}(y) +
\int_{S_u \cap Q_\nu} \varepsilon C |[u(y)]| \, d\cH^{N-1}(y) \\
& & \leq \gamma_1(x_0,\lambda,\nu)+ O(\eps),
\end{eqnarray*}
where we have used the periodicity of $u$ and \eqref{choiceu}. The
conclusion follows by the arbitrariness of $\eps$.

We now assume that $g = \lambda \mbox{\Large $\chi $}_{E}$ where $E$ is an
arbitrary set of finite perimeter. Let $x_0 \in S_g$ be such that 
\begin{equation}  \label{propx0bis}
\lim_{\delta \to 0^+}\frac{1}{\delta^{N-1}}\int_{Q_\nu(x_0,\delta)}|G(x)| \,
dx = 0,
\end{equation}
where we are denoting by $\nu := \nu_g(x_0)$. By Theorem \ref{BaldoL3.1},
let $E_n$ be a sequence of polyhedral sets such that $\displaystyle %
\lim_{n\to +\infty}\mbox{\em Per}_{\Omega}(E_n)=\mbox{\em Per}_{\Omega}(E)$, 
$\mathcal{L}^N(E_n)=\mathcal{L}^N(E)$ and $\mbox{\Large $\chi $}_{E_n}\to %
\mbox{\Large $\chi $}_{E}$ in $L^1(\Omega)$, as $n\to+\infty$. Let $g_n =
\lambda \mbox{\Large $\chi $}_{E_n}$, then $\displaystyle \lim_{n \to +
\infty}g_n = g$ in $L^1(\O ;{\mathbb{R}}^d)$. Hence, given $U \in \cO(\O )$
by Propositions \ref{lsc} and \ref{propW1}, we have 
\begin{eqnarray}  \label{lscinU}
I_1(g,G,\Gamma,U) &\leq& \liminf_{n \to +\infty}I_1(g_n,G,\Gamma,U)  \notag \\
&\leq& \liminf_{n \to +\infty}\left[\int_U W_1(x,G(x)) \, dx + \int_{U \cap
S_{g_n}}\gamma_1(x,[g_n(x)],\nu_{g_n}(x))\,d\cH^{N-1}(x) \right]  \notag \\
&\leq& C \int_U |G(x)| \, dx + \limsup_{n \to +\infty} \int_{U \cap
S_{g_n}}\gamma_1(x,[g_n(x)],\nu_{g_n}(x))\,d\cH^{N-1}(x).
\end{eqnarray}
Recall that by Remark \ref{gamma1} there exists a non-increasing sequence of
continuous functions $\gamma_1^m : \O \times {\mathbb{R}}^N \to [0,+\infty)$
such that 
\begin{equation*}
\gamma_1(x,\lambda,\theta) = \inf_{m}\gamma_1^m(x,\theta) = \lim_m
\gamma_1^m(x,\theta) \leq C |\theta|, \forall (x,\theta) \in \O \times {%
\mathbb{R}}^N.
\end{equation*}
Thus, by Theorem \ref{Reshetnyak}, it follows from \eqref{lscinU} that 
\begin{eqnarray*}
I_1(g,G,\Gamma,U) &\leq& C \int_U |G(x)| \, dx + \limsup_{n \to +\infty} \int_{U
\cap S_{g_n}}\gamma_1^m(x,\nu_{g_n}(x))\,d\cH^{N-1}(x) \\
&\leq& C \int_U |G(x)| \, dx + \int_{U \cap S_g}\gamma_1^m(x,\nu_{g}(x))\,d%
\cH^{N-1}(x).
\end{eqnarray*}
Letting $m \to +\infty$ and using the monotone convergence theorem we
conclude that 
\begin{equation*}
I_1(g,G,\Gamma,U) \leq C \int_U |G(x)| \, dx + \int_{U \cap
S_g}\gamma_1(x,\nu_{g}(x))\,d\cH^{N-1}(x).
\end{equation*}
Using \eqref{propx0bis} and Proposition \ref{propgamma1} we finally obtain 
\begin{eqnarray*}
\frac{d I_1(g,G,\Gamma)}{d \cH^{N-1}\lfloor S_g}(x_0) &=& \lim_{\delta \to 0^+}%
\frac{1}{\delta^{N-1}}I_1(g,G,\Gamma,Q_\nu(x_0,\delta)) \\
&\leq& \lim_{\delta \to 0^+}\frac{1}{\delta^{N-1}} \int_{Q_\nu(x_0,\delta)
\cap S_g}\gamma_1(x,\nu_{g}(x))\,d\cH^{N-1}(x) \\
&\leq& \gamma_1(x_0,[g(x_0)],\nu_g(x_0)) + O(\varepsilon),
\end{eqnarray*}
and the result follows by letting $\varepsilon \to 0^+$.
\end{proof}

\subsection{Integral representation of $I_2$}

\begin{theorem}
\label{I_2} Under hypotheses (H1)-(H8) we have 
\begin{equation*}  
I_2(G, \Gamma) = \int_\Omega W_2(x, G(x), \nabla G(x), \Gamma(x))\, dx + \int_{S_G
\cap \Omega} \gamma_2(x, G(x), [G(x)], \nu_G(x))\, d\cH^{N-1}(x).
\end{equation*}
\end{theorem}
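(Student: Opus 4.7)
The plan is to follow the pattern of the proof of the integral representation of $I_1$ above, combined with the arguments of \cite{CF} adapted to inhomogeneous densities as in \cite{BBBF}. Theorem~\ref{radon} provides that $I_2(G,\Gamma,\cdot)$ is a Radon measure absolutely continuous with respect to $\cL^N + \cH^{N-1}\lfloor S_G$, and Proposition~\ref{fixG} allows us to freeze the first argument of $W$ at $G$. The statement is then equivalent to the two Radon--Nikodym identities
\begin{equation*}
\frac{dI_2}{d\cL^N}(x_0) = W_2(x_0,G(x_0),\nabla G(x_0),\Gamma(x_0)) \quad \text{for } \cL^N\text{-a.e. } x_0 \in \Omega,
\end{equation*}
\begin{equation*}
\frac{dI_2}{d\cH^{N-1}\lfloor S_G}(x_0) = \gamma_2(x_0,G(x_0),[G(x_0)],\nu_G(x_0)) \quad \text{for } \cH^{N-1}\text{-a.e. } x_0 \in S_G,
\end{equation*}
each of which I would establish separately by a blow-up argument for the ``$\geq$'' direction and an explicit competitor construction for the ``$\leq$'' direction.

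For the bulk lower bound at a Lebesgue point $x_0$ of $G$, $\nabla G$ and $\Gamma$, I would take an almost-optimal admissible $\{v_n\}$, pick $\delta_k\to 0^+$ with $\mu(\partial Q(x_0,\delta_k))=0$, and rescale $v_{n,k}(y):=\delta_k^{-1}(v_n(x_0+\delta_k y)-G(x_0))$, so that $\nabla v_{n,k}(y)=\nabla v_n(x_0+\delta_k y)$. After normalising by $\delta_k^N$ the bulk part rescales without weight, while the surface part does so thanks to (H7) absorbing the factor $\delta_k$ coming from the rescaled jump amplitude $\delta_k[v_{n,k}]$. Hypotheses (H3) and (H2), together with Proposition~\ref{fixG}, let one replace $v_n(x)$ by $G(x_0)$ in the first slot of $W$ up to an $O(\varepsilon)$ error; then a truncation on cubes $Q(0,r_{n,k})$ with $r_{n,k}\to 1^-$ and corrections built via Theorem~\ref{Al} and Lemma~\ref{matias-lemma} produce an admissible competitor for $W_2(x_0,G(x_0),\nabla G(x_0),\Gamma(x_0))$ with the required affine boundary datum $\nabla G(x_0)\cdot y$ and exact mean $\Gamma(x_0)$.

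The surface lower bound at $x_0\in S_G$, with $\nu:=\nu_G(x_0)$, proceeds analogously in $Q_\nu(x_0,\delta_k)$ but with the different rescaling $v_{n,k}(y):=v_n(x_0+\delta_k y)-G^-(x_0)$, so that $v_{n,k}\to \gamma_{([G(x_0)],\nu)}$ in $L^1(Q_\nu)$ and $\nabla_y v_{n,k}(y)=\delta_k\,\nabla v_n(x_0+\delta_k y)$. Dividing by $\delta_k^{N-1}$ the bulk integral becomes
\begin{equation*}
\int_{Q_\nu} \delta_k\, W\bigl(x_0+\delta_k y,\, G(x_0+\delta_k y),\, \delta_k^{-1}\nabla v_{n,k}(y)\bigr)\, dy,
\end{equation*}
which by (H4) together with \eqref{h1infty}--\eqref{h3infty} converges to $\int_{Q_\nu} W^\infty(x_0,G(x_0),\nabla v_{n,k})\,dy$ up to vanishing errors, while the surface part is handled via (H6) and (H7) as in the $I_1$ case. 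The boundary and zero-mean corrections can then be performed exactly as in Proposition~\ref{lowerinterfacial}, using Theorem~\ref{Al} and the continuity of the trace in the intermediate topology, yielding a competitor admissible for $\gamma_2(x_0,G(x_0),[G(x_0)],\nu)$.

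For the upper bounds I would mimic Propositions~\ref{Upperbulk} and \ref{upperinterfacial}. In the bulk case, a near-optimal $u$ for $W_2(x_0,G(x_0),\nabla G(x_0),\Gamma(x_0))$ is extended $Q$-periodically and rescaled as $(\delta/n)\,u(n(x-x_0)/\delta)$, then patched via Theorem~\ref{Al} and Lemma~\ref{ctap} so that the resulting sequence is admissible for $I_2(G,\Gamma,Q(x_0,\delta))$ and attains the target value up to $O(\varepsilon)$ by periodicity, (H3) and (H2). For the surface upper bound, I would first treat $G=\Lambda\chi_E$ with $E$ polyhedral via a transition-layer construction on a thin slab $D^n_\nu(x_0,\delta)$ analogous to \eqref{undelta}, and then extend to arbitrary $E$ of finite perimeter using Theorem~\ref{BaldoL3.1}, Proposition~\ref{lsc}, Remark~\ref{gamma2} and Reshetnyak's Theorem~\ref{Reshetnyak}. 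The hard part will be the surface lower bound: one must convert a bulk integral whose gradient argument blows up like $\delta_k^{-1}$ into the recession-based integral defining $\gamma_2$, while simultaneously preserving the affine-boundary and zero-mean constraints through Alberti-type corrections. This matching relies crucially on (H4), (H7) and the uniform $x$-continuity encoded in (H3), (H6) and \eqref{h3infty}.
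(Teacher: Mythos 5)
Your proposal matches the paper's approach essentially point for point: blow-up with normalization by $\delta_k^N$ (bulk) or $\delta_k^{N-1}$ (surface) for the lower bounds, (H4) to convert $\delta_k W(\cdot,\cdot,\delta_k^{-1}\nabla v_{n,k})$ to the recession density $W^\infty$ in the surface lower bound, periodic rescaling of cell competitors for the bulk upper bound, and the two-step polyhedral/general argument via Theorem \ref{BaldoL3.1}, Proposition \ref{lsc}, Remark \ref{gamma2} and Reshetnyak's Theorem \ref{Reshetnyak} for the surface upper bound. The use of Proposition \ref{fixG} and hypotheses (H2), (H3), (H6) to freeze the first slot of $W$ at $G(x_0)$ is also exactly what the paper does.

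One small but worthwhile simplification you missed: for the lower-bound corrections you propose ``Alberti-type corrections'' and Lemma \ref{matias-lemma}, mirroring the $I_1$ proof. But the $W_2$ and $\gamma_2$ cell formulas constrain only the \emph{average} $\int_Q \nabla u$ (plus an affine, not zero, boundary datum), not the pointwise gradient. The paper exploits this by adding a genuinely affine corrector $C_{n,k}\cdot x$ on the inner cube $Q(0,r_{n,k})$ and using the affine function $\nabla G(x_0)\cdot x$ (resp.\ $\gamma_{([G(x_0)],\nu)}$) on the annulus, with $C_{n,k}$ chosen to hit the exact mean. This introduces no new jumps and no new bulk energy beyond an $O(|C_{n,k}|)$ term controllable by (H2), so Alberti's Theorem \ref{Al} is unnecessary at this step; invoking it is not wrong but creates extra surface terms one would then have to estimate away. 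Likewise Lemma \ref{matias-lemma} produces a zero boundary datum, which is not what the $W_2$ cell problem requires.
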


\begin{proof}
The proof of the above integral representation for $I_2$ is similar to that
of $I_1$ so we will only outline the proof.

In order to obtain a lower bound for the bulk term we start by fixing a
point $x_0$, which is chosen to be a point of approximate differentiability
of $G$ and of approximate continuity of $\Gamma$. Starting from a sequence $v_n$
for which 
\begin{equation}  \label{vn}
\lim_{n \to +\infty}\left[ \int_\O W(x,G(x),\nabla v_n(x)) \, dx +
\int_{S_{v_n} \cap \O }\Psi_2(x,[v_n(x)],\nu_{v_n}(x)) \, d\cH^{N-1}(x)%
\right] < +\infty
\end{equation}
we construct a new sequence $u_{n,k}$ so that 
\begin{eqnarray*}
& & \frac{d I_2(G,\Gamma)}{d \cL^{N}}(x_0) \\
& & \geq \lim_{k,n}\left[ \int_Q W(x_0,G(x_0),\nabla u_{n,k}(x)) \, dx +
\int_{S_{u_{n,k}} \cap Q}\hspace{-0,2cm}\Psi_2(x_0,[u_{n,k}(x)],%
\nu_{u_{n,k}}(x)) \, d\cH^{N-1}(x)\right] + O(\varepsilon),
\end{eqnarray*}
where we use hypotheses (H2) and (H6) to fix $x_0$ and $G(x_0)$. We further
modify $u_{n,k}$ in order to obtain a sequence $z_{n,k}$ which is admissible
for $W_2(x_0, G(x_0), \nabla G(x_0), \Gamma(x_0))$. This is achieved by setting $%
z_{n,k}(x)$ equal to $\nabla G(x_0) \cdot x$ near the boundary of $Q$ and
equal to $u_{n,k}(x) + C_{n,k}\cdot x$ in a smaller cube of the form $%
Q(0,r_{n,k})$, where $C_{n,k}$ is chosen so that 
\begin{equation*}
\displaystyle \int_Q\nabla z_{n,k}(x) \, dx = \Gamma(x_0).
\end{equation*}
Hypotheses (H2) and (H5) and a careful selection of the side-length of the
smaller cube $r_{n,k}$ guarantee that the energy does not increase when $%
u_{n,k}$ is replaced by $z_{n,k}$ so the result follows by letting $%
\varepsilon \to 0^+$.

Regarding the lower bound for the interfacial term we fix a point $x_0$,
which is chosen to be a point of approximate continuity of $G$, and such
that 
\begin{equation*}
\lim_{\delta \to 0^+}\frac{\cH^{N-1}(S_G \cap Q_\nu(x_0,\delta))}{%
\delta^{N-1}} = 1
\end{equation*}
and 
\begin{equation*}
\lim_{\delta \to 0^+}\frac{1}{\delta^{N-1}} \int_{Q_\nu(x_0,\delta)} |\Gamma(x)|
\, dx = 0,
\end{equation*}
where $\nu :=\nu_G(x_0)$. Starting from the sequence $v_n$ in \eqref{vn},
the properties of $x_0$, together with hypotheses (H2) and (H6), yield a new
sequence $w_{n,k}$ satisfying 
\begin{eqnarray*}
& & \frac{d I_2(G,\Gamma)}{d \cH^{N-1}\lfloor S_G}(x_0) \\
& & \geq \lim_{k,n}\left[ \int_{Q_\nu} \hspace{-0,25cm} W^{%
\infty}(x_0,G(x_0),\nabla w_{n,k}(x)) \, dx + \int_{S_{w_{n,k}} \cap Q_\nu}%
\hspace{-0,5cm}\Psi_2(x_0,[w_{n,k}(x)],\nu_{w_{n,k}}(x)) \, d\cH^{N-1}(x)%
\right] \hspace{-0,1cm} + O(\varepsilon),
\end{eqnarray*}
in this step hypothesis (H4) comes into play. As above, $w_{n,k}$ is further
modified in order to obtain a sequence $z_{n,k}$ which is admissible for $%
\gamma_2(x_0, G(x_0),[G(x_0)], \nu_G(x_0))$. This is achieved by setting $%
z_{n,k}(x)$ equal to $\gamma_{([G(x_0)],\nu)}$ near the boundary of $Q_\nu$
and equal to $w_{n,k}(x) + C_{n,k}\cdot x$ in a smaller cube of the form $%
Q(0,r_{n,k})$, where $C_{n,k}$ is chosen so that 
\begin{equation*}
\displaystyle \int_{Q_\nu}\nabla z_{n,k}(x) \, dx = 0.
\end{equation*}
Due to hypotheses (H2) and (H5), the replacement of $w_{n,k}$ by $z_{n,k}$
does not translate into an increase in energy, so the result follows by
letting $\varepsilon \to 0^+$.

For the upper bound for the bulk term we fix a point $x_0$ of approximate
continuity of both $G$ and $\Gamma$ and, for $\varepsilon > 0$, we let $v \in
SBV(Q; {\mathbb{R}}^{d \times N})$ be such that $v(x) = \nabla G(x_0) \cdot
x $ on $\partial Q$, $\displaystyle \int_Q \nabla v(x) \, dx = \Gamma(x_0)$ and 
\begin{eqnarray}  \label{v}
& & \int_Q W(x_0,G(x_0),\nabla v(x)) \, dx + \int_{S_{v} \cap
Q}\Psi_2(x_0,[v(x)],\nu_{v}(x)) \, d\cH^{N-1}(x)  \notag \\
& & \hspace{4,5cm}\leq W_2(x_0, G(x_0), \nabla G(x_0), \Gamma(x_0)) + \varepsilon.
\end{eqnarray}
Extending $v$ by periodicity to all of ${\mathbb{R}}^N$ and using Theorem %
\ref{Al} and Lemma \ref{ctap} we construct a sequence $w_{n,\delta}$ so that 
\begin{eqnarray*}
& & \frac{d I_2(G,\Gamma)}{d \cL^{N}}(x_0) \\
& & \leq \limsup_{\delta \to 0^+}\limsup_{n \to + \infty}\frac{1}{\delta^N} %
\left[ \int_{Q(x_0,\delta)} W(x,G(x),\nabla w_{n,\delta}(x)) \, dx \right. \\
& & \hspace{3cm} \left. + \int_{Q(x_0,\delta) \cap S_{w_{n,\delta}}}\hspace{%
-0,2cm} \Psi_2(x,[w_{n,\delta}(x)],\nu_{w_{n,\delta}}(x)) \, d\cH^{N-1}(x)%
\right] \\
& & \leq \int_Q W(x_0,G(x_0),\nabla v(x)) \, dx + \int_{S_{v} \cap
Q}\Psi_2(x_0,[v(x)],\nu_{v}(x)) \, d\cH^{N-1}(x) + O(\varepsilon),
\end{eqnarray*}
where we use hypotheses (H2) and (H6) to fix $x_0$ and $G(x_0)$, and
periodicity arguments. Hence, by \eqref{v} and given the arbitrariness of $%
\varepsilon$, we conclude the desired inequality.

As in the case of $I_1$, the upper bound for the interfacial term of $I_2$
is proved in two steps, first for $G = \Lambda \mbox{\Large $\chi $}_{E}$,
where $E$ is a polyhedral set, and then generalized to an arbitrary set of
finite perimeter $E$, by using Theorem \ref{BaldoL3.1}, Propositions \ref%
{lsc}, \ref{propW2}, \ref{gamma2}, as well as Remark \ref{gamma2} and
Theorem \ref{Reshetnyak}.

To prove the first step, fix a point $x_0$ such that 
\begin{equation*}
\lim_{\delta \to 0^+}\frac{1}{\delta^{N-1}}\int_{Q_\nu(x_0,\delta)}|\Gamma(x)| +
|G(x)| + |\nabla G(x)| \, dx = 0,
\end{equation*}
where $\nu :=\nu_G(x_0)$. Given $\varepsilon > 0$, we let $v \in SBV(Q_\nu; {%
\mathbb{R}}^{d \times N})$ be such that $v(x) = \gamma_{(\Lambda,\nu)}$ on $%
\partial Q_\nu$, $\displaystyle \int_{Q_\nu} \nabla v(x) \, dx = 0$ and 
\begin{eqnarray}  \label{vbis}
& & \int_{Q_\nu} W^{\infty}(x_0,G(x_0),\nabla v(x)) \, dx + \int_{S_{v} \cap
Q_\nu}\Psi_2(x_0,[v(x)],\nu_{v}(x)) \, d\cH^{N-1}(x)  \notag \\
& & \hspace{5cm}\leq \gamma_2(x_0, G(x_0),\Lambda,\nu) + \varepsilon.
\end{eqnarray}
Extending $v$ by periodicity to all of ${\mathbb{R}}^N$, and using the usual
combination of Theorem \ref{Al} and Lemma \ref{ctap}, we construct a
sequence $w_{n,\delta}$ so that 
\begin{eqnarray*}
& & \frac{d I_2(G,\Gamma)}{d \cH^{N-1}\lfloor S_G}(x_0) \\
& & \leq \limsup_{\delta \to 0^+}\limsup_{n \to + \infty}\frac{1}{%
\delta^{N-1}} \left[ \int_{Q_\nu(x_0,\delta)} W(x,G(x),\nabla
w_{n,\delta}(x)) \, dx \right. \\
\end{eqnarray*}
\begin{eqnarray*}
& & \hspace{2cm}\left. + \int_{Q_\nu(x_0,\delta) \cap S_{w_{n,\delta}}}%
\hspace{-0,2cm} \Psi_2(x,[w_{n,\delta}(x)],\nu_{w_{n,\delta}}(x)) \, d\cH%
^{N-1}(x)\right] \\
& & \leq \int_{Q_\nu} W^{\infty}(x_0,G(x_0),\nabla v(x)) \, dx + \int_{S_{v}
\cap Q_\nu}\Psi_2(x_0,[v(x)],\nu_{v}(x)) \, d\cH^{N-1}(x) + O(\varepsilon),
\end{eqnarray*}
where we use hypotheses (H2) and (H6) to fix $x_0$ and $G(x_0)$, (H4) to
pass from $W$ to $W^{\infty}$, and periodicity arguments. Thus, letting $%
\varepsilon \to 0^+$, the desired inequality follows by \eqref{vbis}.
\end{proof}

\section{Example and Applications}\label{sect:app}

\subsection{An example}

We provide an example in which the initial energy depends only on jumps in
gradients through a specific initial interfacial energy $\Psi _{2}$ and in
which an explicit formula for the bulk relaxed energy density emerges. \
Consider the initial energy $E$ in (\ref{IgG}) with $W=0,\Psi _{1}=0$ and,
for $a\in {\mathbb{R}}^{N}$ a fixed unit vector, 
\begin{equation}
\Psi _{2}(x,J,\nu )=\left\vert \nu \cdot Ja\right\vert ,
\label{example initial energy}
\end{equation}%
for all $x\in \Omega $, $J\in \mathbb{R}^{N\times N}$, and $\nu \in S^{N-1}$.

From Theorem \ref{main}, and in view of Remark \ref{rem3.1}(4), we have that $W_{1}=0$, and we have the following cell formula for the bulk
part $W_{1}+W_{2}=W_{2}$ of the relaxed energy in this setting: for almost
every $x\in \Omega $, $A\in \mathbb{R}^{N\times N}$, $L,M\in \mathbb{R}%
^{N\times N\times N}$%
\begin{equation}\label{cell formula for W2}
\begin{split}
W_{2}(x,A,L,M) =\inf_{u\in SBV(Q;{\mathbb{R}}^{N\times N})}\bigg\{ & \int_{Q\cap
S_{u}}\left\vert \nu _{u}(y)\cdot \lbrack u](y)a\right\vert \,d\mathcal{H}%
^{N-1}(y):  \\
& u|_{\partial Q}(y)=Ly, \int_{Q}\nabla u(y)\,dy=M\bigg\}.
\end{split}
\end{equation}%
Consequently, $W_{2}$ does not depend upon $x$ and $A$, and we omit these
variables. \ It is helpful in what follows to use the fact that each element 
$M\in \mathbb{R}^{N\times N\times N}$ can be identified with a bilinear
mapping from $\mathbb{R}^{N}$ into $\mathbb{R}^{N}$ 
\begin{equation}
\mathbb{R}^{N}\times \mathbb{R}^{N}\ni (y,z)\longmapsto M(y,z)\in \mathbb{R}%
^{N}  \label{bilinear version}
\end{equation}%
where we have used the same symbol for the matrix and its associated
bilinear mapping. \ Specifically, we may put%
\begin{equation*}
M(y,z)_{i}=\sum_{j,k=1}^{N}M_{ijk}y_{j}z_{k\text{ \ \ \ }}\text{for all }%
y,z\in \mathbb{R}^{N}\text{.}
\end{equation*}%
We denote the set of bilinear mappings on $\mathbb{R}^{N}$ with values in $%
\mathbb{R}^{N}$ by $Lin^{2}(\mathbb{R}^{N})$, and we note that for each $M$ $%
\in Lin^{2}(\mathbb{R}^{N})$ the mapping $M(\cdot ,a)$ is a linear mapping
on $\mathbb{R}^{N}$ with values in $\mathbb{R}^{N}$, i.e., $M(\cdot ,a)\in
Lin(\mathbb{R}^{N})$.

Our main result here is the following explicit formula for $W_{2}$ in (\ref%
{cell formula for W2})$:$ for all $L,M\in Lin^{2}(\mathbb{R}^{N})$%
\begin{equation}
W_{2}(L,M)=\left\vert tr(L(\cdot ,a)-M(\cdot ,a))\right\vert
\label{explicit formula for W2}
\end{equation}%
where $tr$ denotes the trace operation on $Lin(\mathbb{R}^{N})$. \ In terms
of the associated elements of $\mathbb{R}^{N\times N\times N}$ the formula (%
\ref{explicit formula for W2}) reads%
\begin{equation}
W_{2}(L,M)=\left\vert \sum_{i,j=1}^{N}(L_{iij}-M_{iij})a_{j}\right\vert .
\label{component form for W2}
\end{equation}%
With reference to Theorem \ref{main}, when $W=0,\Psi _{1}=0$, and $\Psi _{2}$ is
given by (\ref{example initial energy}), we conclude that, for all $%
(g,G,\Gamma )\in SD^{2}(\Omega ;\R{d})$, the bulk part of the relaxed energy 
$I(g,G,\Gamma )$ is given by the integral%
\begin{equation}
\int_{\Omega }W_{2}(\nabla G(x),\Gamma (x))d\mathcal{L}^{N}(x)=\int_{\Omega
}\left\vert tr((\nabla G(x)-\Gamma (x))(\cdot ,a))\right\vert d\mathcal{L}%
^{N}(x).  \label{formula for bulk relaxed energy}
\end{equation}%
This formula shows explicitly how the volume density of gradient
disarrangements $\nabla G-\Gamma $ determines the bulk relaxed energy
associated with the purely interfacial initial energy density 
\begin{equation}
E(u)=\int_{S_{\nabla u}\cap \Omega }\left\vert \nu _{\nabla u}\cdot \lbrack
\nabla u]a\right\vert d\mathcal{H}^{N-1}.
\label{explicit interfacial initial energy}
\end{equation}%
It is worth noting that the initial energy density $E(u)$ measures the
non-tangential part of the jumps in the directional derivative \ $(\nabla
u)a $, so that the integrand in (\ref{formula for bulk relaxed energy})
provides for the second-order structured deformation $(g,G,\Gamma )$ an
optimal volume density that accounts for non-tangential jumps in the
directional derivative $(\nabla u)a$ of approximating deformations $u$.

To verify (\ref{explicit formula for W2}), we use Theorem 2 of
\cite{OP15} and follow the strategy in the proof of Lemma 2 in that
article. As in their proof, a simple argument based on the triangle
inequality and the Divergence Theorem for functions of bounded variation
shows that $\left\vert tr(L(\cdot ,a)-M(\cdot ,a))\right\vert $ is a lower
bound for $W_{2}(L,M)$. To show the opposite inequality, we first consider
the case in which the linear mapping $L(\cdot ,a)-M(\cdot ,a)$ is in the set 
$\mathcal{S}\subset Lin(\mathbb{R}^{N})$ of linear mappings with $N$
distinct eigenvalues each having non-zero real part and each with trace
non-zero. \ According to Theorem 1 in \cite{OP15}, $\mathcal{S}$ is
dense in $Lin(\mathbb{R}^{N})$. \ Let $R\subset Q$ be in the set $\mathcal{A}
$ of all sets of finite perimeter having non-zero volume and compactly
contained in $Q$. \ We define $u_{R}:Q\longrightarrow \mathbb{R}^{N\times N}$
by\ 
\begin{equation}
u_{R}(x)=
\begin{cases}
Lx & \text{if }x\in Q\backslash R \\ 
\left\vert R\right\vert ^{-1}(M-(1-\left\vert R\right\vert )L)x & \text{if } x\in R
\end{cases}  \label{definition of uR}
\end{equation}%
and note that $u_{R}\in SBV(Q,\mathbb{R}^{N\times N})$, its jump set $%
S_{u_{R}}$ is included in $\partial^* R$ (the essential boundary of $R$, see \cite{AFP}), 
and%
\begin{equation*}
\lbrack u_{R}](x)=\left\vert R\right\vert ^{-1}(L-M)x\text{ \ \ \ for }%
\mathcal{H}^{N-1}\text{-a.e. }x\in \partial R.
\end{equation*}%
These properties of $u_{R}$ and the arbitrariness of $R$ imply that for all $%
R\in \mathcal{A}$%
\begin{equation*}
W_{2}(L,M)\leq \left\vert R\right\vert ^{-1}\int_{\partial R}\left\vert \nu
_{u_{R}}(x)\cdot ((L-M)x)a\right\vert d\mathcal{H}^{N-1}(x)
\end{equation*}%
so that $W_{2}(L,M)$ does not exceed the infimum of the right-hand side with
respect to $R\in \mathcal{A}$. \ Because $L(\cdot ,a)-M(\cdot ,a)$ is in the
set $\mathcal{S}$ \ we may apply Theorem 2 of \cite{OP15} to conclude 
\begin{equation*}
W_{2}(L,M)\leq \left\vert tr(L(\cdot ,a)-M(\cdot ,a))\right\vert
\end{equation*}%
which implies the equality (\ref{explicit formula for W2}) when $L(\cdot
,a)-M(\cdot ,a)\in $ $\mathcal{S}$.

In order to verify (\ref{explicit formula for W2}) for arbitrary $L,M\in
Lin^{2}(\mathbb{R}^{N})$, we first note that for each $z\in \mathbb{R}^{N}$
we may write $z=(z\cdot a)a+z_{\perp }$ where \ $z_{\perp }\cdot a=0$ . \
Now put $\Delta =L-M$ and notice that, by the linearity of $\Delta (y,\cdot )
$, there holds 
\begin{eqnarray*}
\Delta (y,z) &=&\Delta (y,(z\cdot a)a+z_{\perp }) \\
&=&(z\cdot a)\Delta (y,a)+\Delta (y,z_{\perp }).
\end{eqnarray*}%
Since $\Delta (\cdot ,a)\in Lin(\mathbb{R}^{N})$ and $\mathcal{S}$ is dense
in $Lin(\mathbb{R}^{N})$, we may choose a sequence $n\longmapsto A_{n}\in 
\mathcal{S}$ such that $\lim_{n\rightarrow \infty }A_{n}=\Delta (\cdot ,a)$.
\ We set%
\begin{equation*}
\Delta _{n}(y,z)=(z\cdot a)A_{n}y+\Delta (y,z_{\perp })
\end{equation*}%
and observe that $\Delta _{n}\in Lin^{2}(\mathbb{R}^{N})$ and for all $%
y,z\in \mathbb{R}^{N}$%
\begin{eqnarray*}
\lim_{n\rightarrow \infty }\Delta _{n}(y,z) &=&(z\cdot a)\Delta (y,a)+\Delta
(y,z_{\perp })=\Delta (y,z) \\
&=&(L-M)(y,z).
\end{eqnarray*}%
Putting $M_{n}=L-$ $\Delta _{n}$, we conclude that $\lim_{n\rightarrow
\infty }M_{n}=L-\lim_{n\rightarrow \infty }\Delta _{n}=M$ as well as 
\begin{eqnarray*}
(L-M_{n})(y,a) &=&\Delta _{n}(y,a) \\
&=&(a\cdot a)A_{n}y+\Delta (y,a_{\perp })=A_{n}y,
\end{eqnarray*}%
so that $(L-M_{n})(\cdot ,a)\in \mathcal{S}$. $\ $(In the last step we have
used the fact that $a_{\perp }=0$.) \ Therefore, $W_{2}(L,M_{n})=\left\vert
trA_{n}\right\vert $, and letting $n\rightarrow \infty $ and using the
continuity of $W_{2}(L,\cdot )$ established in Proposition \ref{propW2} \
and of the trace operator we conclude that

\begin{eqnarray*}
W_{2}(L,M) &=&\lim_{n\rightarrow \infty }W_{2}(L,M_{n})=\lim_{n\rightarrow
\infty }\left\vert trA_{n}\right\vert =\left\vert tr\lim_{n\rightarrow
\infty }A_{n}\right\vert  \\
&=&\left\vert tr(L-M)(\cdot ,a)\right\vert 
\end{eqnarray*}%
and thereby complete the verification of (\ref{explicit formula for W2}). \ 

\ \ \ \

\subsection{Applications}

For the case $\Omega \subset \mathbb{R}^{3}$ the relaxed energies for
first-order structured deformations $(g,G)\in SBV^{2}(\Omega ,\mathbb{R}%
^{3})\times SBV(\Omega ,\mathbb{R}^{3\times 3})$ studied in \cite{BMS}
provide a means of capturing the effect of both submacroscopically smooth
changes and of submacroscopically non-smooth geometrical changes
(disarrangements) on the bulk energy stored in a three-dimensional body. \
In particular, the bulk relaxed energy density $(A,B)\longmapsto W_{1}(A-B)$
of \cite{BMS} provides the portion%
\begin{equation*}
I_{dis}(g,G)=\int_{\Omega }W_{1}(\nabla g(x)-G(x))d\mathcal{L}^{3}(x)
\end{equation*}%
of the bulk part of the relaxed energy that arises from disarrangements. \ This interpretation
of $I_{dis}(g,G)$ is justified by considering a sequence $\{u_{n}\}$ in $%
SBV^{2}(\Omega ,\mathbb{R}^{3})$ with $u_{n}\rightarrow g$ and $\nabla
u_{n}\rightarrow G$ both in $L^{1}$ and by writing%
\begin{eqnarray}
\nabla g\,\mathcal{L}^{3}+[g]\otimes \nu _{g}\,\mathcal{H}^{2}
&=&Dg=D\lim_{n\rightarrow \infty }u_{n}=\lim_{n\rightarrow \infty }Du_{n} 
\notag \\
&=&\lim_{n\rightarrow \infty }(\nabla u_{n}\,\mathcal{L}^{3}+[u_{n}]\otimes
\nu _{u_{n}}\,\mathcal{H}^{2})  \notag \\
&=&G\mathcal{L}^{3}+\lim_{n\rightarrow \infty }([u_{n}]\otimes \nu _{u_{n}}\,%
\mathcal{H}^{2}),  \label{distributional derivative}
\end{eqnarray}%
showing that $M\mathcal{L}^{3}:=(\nabla g-G)\,\mathcal{L}^{3}$ is the
absolutely continuous part of the limit of the singular measures $%
[u_{n}]\otimes \nu _{u_{n}}\,\mathcal{H}^{2}$ that capture the
submacroscopic disarrangements associated with $(g,G)$. \ Moreover, the
energy density $(A,L)\longmapsto W_{2}(A,L)$ of \cite{BMS} provides the
remaining portion%
\begin{equation*}
I_{\backslash }(g,G)=\int_{\Omega }W_{2}(G(x),\nabla G(x))d\mathcal{L}^{3}(x)
\end{equation*}%
of the bulk part of the relaxed energy, namely, the portion that arises without disarrangements.

The availability in \cite{BMS} (or, alternatively, directly from the results
of \cite{CF}) of such refined bulk energies provides connections to the
research \cite{DO03} 
that attempts to broaden classical, finite elasticity into the setting of
first-order structured deformations through the field theory "elasticity
with disarrangements". \ (That theory requires the specification at the
outset of a bulk energy in the form $\int_{\Omega }\Psi (G(x),\nabla g(x))d%
\mathcal{L}^{3}(x)$, so that, for applications of energy relaxation to
elasticity with disarrangements, the dependence of the bulk density $W_{2}$
on the third-order tensor field $\nabla G$ in the formula for $I_{\backslash
}(g,G)$ can be dropped).\ Elasticity with disarrangements \cite{DO03} 
has been applied to the study of granular materials \cite{DO13,DO14,DO15}, 
with $G$ representing the smooth deformation of grains and with $g$
representing the macroscopic deformation of the aggregate of grains, and
this broadened version of finite elasticity has provided a setting in which
no-tension materials with non-linear response in compression arise in a
natural way.

While the scope of elasticity with disarrangements is broad enough to
capture some energetic effects of disarrangements, its setting in the
context of first-order structured deformations precludes its capturing
directly the effects of ``gradient disarrangements'', i.e., of jumps in the
gradients of deformations that approximate geometrical changes at the
smaller length scale. The theory of second-order structured deformations $%
(g,G,\Gamma )\in SBV^{2}(\Omega ,\mathbb{R}^{3})\times SBV(\Omega ,%
\mathbb{R}^{3\times 3})$\ $\times L^{1}(\Omega ,\mathbb{R}^{3\times 3\times
3})$ guarantees the existence of a sequence $n\longmapsto u_{n}\in
SBV^{2}(\Omega ,\mathbb{R}^{3})$ such that $u_{n}\rightarrow g$ and $\nabla
u_{n}\rightarrow G$ in $L^{1}$\ while \ $\nabla ^{2}u_{n}$ tends to $\Gamma $
weakly in the sense of measures. \ Following the idea of the calculation (%
\ref{distributional derivative}) we have%
\begin{eqnarray}
\nabla ^{2}g\,\mathcal{L}^{3}+[\nabla g]\otimes \nu _{\nabla g}\,\mathcal{H}%
^{2} &=&D\nabla g  \notag \\
&=&D(\nabla g-G)+DG=DM+D\lim_{n\rightarrow \infty }\nabla u_{n}  \notag \\
&=&DM+\lim_{n\rightarrow \infty }D\nabla u_{n}  \notag \\
&=&\nabla M\mathcal{L}^{3}+[M]\otimes \nu _{\lbrack M]}\,\mathcal{H}^{2} 
\notag \\
&&+\lim_{n\rightarrow \infty }(\nabla ^{2}u_{n}\,\mathcal{L}^{3}+[\nabla
u_{n}]\otimes \nu _{\nabla u_{n}}\,\mathcal{H}^{2})  \notag \\
&=&(\nabla M+\Gamma )\mathcal{L}^{3}+[M]\otimes \nu _{\lbrack M]}\,\mathcal{H%
}^{2}  \notag \\
&&+\lim_{n\rightarrow \infty }([\nabla u_{n}]\otimes \nu _{\nabla u_{n}}\,%
\mathcal{H}^{2}),  \label{second distributional derivative}
\end{eqnarray}%
which shows that $\nabla ^{2}g-\nabla M-\Gamma =\nabla ^{2}g-\nabla (\nabla
g-G)-\Gamma =\nabla G-\Gamma $ is the absolutely continuous part of the
distributional limit as $n\rightarrow \infty $ of the singular measures $%
[\nabla u_{n}]\otimes \nu _{\nabla u_{n}}\,\mathcal{H}^{2}$. \ 

We conclude that each second-order structured deformation $(g,G,\Gamma )$
provides the field $\nabla G-\Gamma \in $ $L^{1}(\Omega ,\mathbb{R}^{3\times
3\times 3})$ that serves as a volume density of gradient disarrangements.
Moreover, since the initial pair $(g,G)$ in the triple $(g,G,\Gamma )$ is a
first-order structured structured deformation, the field $\nabla g-G$\ $\in
SBV(\Omega ,\mathbb{R}^{3\times 3})$ remains available as a volume density
of disarrangements. \ Consequently, the results in the present paper on
relaxation in the context of second-order structured deformations capture
the influence both of disarrangements and of gradient-disarrangements on
relaxed energies and provide the starting point for broadening elasticity
with disarrangements to the richer multiscale geometry of second-order
structured deformations. \ Initial steps toward such a broadening have been
taken \cite{O} 
in the context of second-order structured deformations. \ A physical context
of significance -- phase-transitions in metals \cite{AF15,AD15,JH00} 
-- provides a setting in which deformations can be approximately piecewise
homogeneous at small length scales. In this setting it is appropriate to
assume that there are approximating piecewise smooth deformations $u_{n}$
with the property $\Gamma =\lim_{n\rightarrow \infty }\nabla ^{2}u_{n}=0$. \
Second-order structured deformations of the form $(g,G,0)$ are called 
\textit{submacroscopically} \textit{affine,} and, for them, the
gradient-disarrangement density $\nabla G-\Gamma $ reduces to $\nabla G$,
i.e., the "strain-gradient" quantity $\nabla G$ measures the volume density
of jumps in gradients of approximating piecewise affine deformations. \ The
results of the present paper provide in particular an energetics of bodies
undergoing submacroscopically affine structured deformations\ and, looking
ahead, will provide the constitutive input for the field theory ``elasticity
with gradient disarrangements'' applied to bodies undergoing deformations
that are approximately piecewise homogeneous at small length scales.

\bigskip

\noindent\textbf{Acknowledgements}. 
The authors warmly thank the Center for Nonlinear Analysis (NSF Grants No.\@ DMS-0405343 and DMS-0635983) at Carnegie Mellon University, Pittsburgh, USA, CAMGSD at Instituto Superior Técnico, Lisbon, Portugal, and SISSA, Trieste, Italy, where this research was carried out.
MM is a member of the Progetto di Ricerca GNAMPA-INdAM 2015 ``Fenomeni critici nella meccanica dei materiali: un approccio variazionale''.
The research of ACB, JM, and MM was partially supported by the Funda\c{c}\~{a}o para a Ci\^{e}ncia e a Tecnologia (Portuguese Foundation for Science and Technology) through the CMU-Portugal Program under grant FCT-UTA\_CMU/MAT/0005/2009 ``Thin Structures, Homogenization, and Multiphase Problems''.
The research of ACB was partially supported by the Funda\c{c}\~{a}o para a Ci\^{e}ncia e a Tecnologia through grant UID/MAT/04561/2013.
The research of MM was partially supported by the European Research Council through the ERC Advanced Grant ``QuaDynEvoPro'', grant agreement no.\@ 290888.
The research of JM was partially supported by the Funda\c{c}\~{a}o para a Ci\^{e}ncia e a Tecnologia through grant UID/MAT/04459/2013.

\end{document}